\def\definetac{\newif\iftac}    
\else\usepackage{amsthm}\fi
\definecolor{darkgreen}{rgb}{0,0.45,0} 
\let\ea\expandafter
\def\mdef#1#2{\ea\ea\ea\gdef\ea\ea\noexpand#1\ea{\ea\ensuremath\ea{#2}\xspace}}
\def\alwaysmath#1{\ea\ea\ea\global\ea\ea\ea\let\ea\ea\csname your@#1\endcsname\csname #1\endcsname
  \ea\def\csname #1\endcsname{\ensuremath{\csname your@#1\endcsname}\xspace}}
\DeclareRobustCommand\widecheck[1]{{\mathpalette\@widecheck{#1}}}
\def\@widecheck#1#2{%
    \setbox\z@\hbox{\m@th$#1#2$}%
    \setbox\tw@\hbox{\m@th$#1%
       \widehat{%
          \vrule\@width\z@\@height\ht\z@
          \vrule\@height\z@\@width\wd\z@}$}%
    \dp\tw@-\ht\z@
    \@tempdima\ht\z@ \advance\@tempdima2\ht\tw@ \divide\@tempdima\thr@@
    \setbox\tw@\hbox{%
       \raise\@tempdima\hbox{\scalebox{1}[-1]{\lower\@tempdima\box
\tw@}}}%
    {\ooalign{\box\tw@ \cr \box\z@}}}
\def\foreachletter#1#2#3{\foreachcount=#1
  \ea\loop\ea\ea\ea#3\@alph\foreachcount
  \advance\foreachcount by 1
  \ifnum\foreachcount<#2\repeat}
\def\foreachLetter#1#2#3{\foreachcount=#1
  \ea\loop\ea\ea\ea#3\@Alph\foreachcount
  \advance\foreachcount by 1
  \ifnum\foreachcount<#2\repeat}
\def\definescr#1{\ea\gdef\csname s#1\endcsname{\ensuremath{\mathscr{#1}}\xspace}}
\def\definecal#1{\ea\gdef\csname c#1\endcsname{\ensuremath{\mathcal{#1}}\xspace}}
\def\definebold#1{\ea\gdef\csname b#1\endcsname{\ensuremath{\mathbf{#1}}\xspace}}
\def\definebb#1{\ea\gdef\csname l#1\endcsname{\ensuremath{\mathbb{#1}}\xspace}}
\def\definefrak#1{\ea\gdef\csname f#1\endcsname{\ensuremath{\mathfrak{#1}}\xspace}}
\def\definebar#1{\ea\gdef\csname #1bar\endcsname{\ensuremath{\overline{#1}}\xspace}}
\def\definetil#1{\ea\gdef\csname #1til\endcsname{\ensuremath{\widetilde{#1}}\xspace}}
\def\definehat#1{\ea\gdef\csname #1hat\endcsname{\ensuremath{\widehat{#1}}\xspace}}
\def\definechk#1{\ea\gdef\csname #1chk\endcsname{\ensuremath{\widecheck{#1}}\xspace}}
\def\defineul#1{\ea\gdef\csname u#1\endcsname{\ensuremath{\underline{#1}}\xspace}}
\def\autofmt@n#1\autofmt@end{\mathrm{#1}}
\def\autofmt@b#1\autofmt@end{\mathbf{#1}}
\def\autofmt@l#1#2\autofmt@end{\mathbb{#1}\mathsf{#2}}
\def\autofmt@c#1#2\autofmt@end{\mathcal{#1}\mathit{#2}}
\def\autofmt@s#1#2\autofmt@end{\mathscr{#1}\mathit{#2}}
\def\autofmt@f#1\autofmt@end{\mathsf{#1}}
\def\autofmt@u#1\autofmt@end{\underline{\smash{\mathsf{#1}}}}
\def\autofmt@U#1\autofmt@end{\underline{\underline{\smash{\mathsf{#1}}}}}
\def\autofmt@h#1\autofmt@end{\widehat{#1}}
\def\autofmt@r#1\autofmt@end{\overline{#1}}
\def\autofmt@t#1\autofmt@end{\widetilde{#1}}
\def\autofmt@k#1\autofmt@end{\check{#1}}
\def\auto@drop#1{}
\def\autodef#1{\ea\ea\ea\@autodef\ea\ea\ea#1\ea\auto@drop\string#1\autodef@end}
\def\@autodef#1#2#3\autodef@end{%
  \ea\def\ea#1\ea{\ea\ensuremath\ea{\csname autofmt@#2\endcsname#3\autofmt@end}\xspace}}
\def\autodefs@end{blarg!}
\def\autodefs#1{\@autodefs#1\autodefs@end}
\def\@autodefs#1{\ifx#1\autodefs@end%
  \def\autodefs@next{}%
  \else%
  \def\autodefs@next{\autodef#1\@autodefs}%
  \fi\autodefs@next}
\DeclareSymbolFont{bbold}{U}{bbold}{m}{n}
\DeclareSymbolFontAlphabet{\mathbbb}{bbold}
\newcommand{\bbone}{\ensuremath{\mathbbb{1}}\xspace}
\mdef\delbar{\overline{\partial}}
\mdef\hf{\textstyle\frac12 }
\mdef\thrd{\textstyle\frac13 }
\mdef\qtr{\textstyle\frac14 }
\newcommand{\op}{^{\mathrm{op}}}
\newcommand{\pushoutcorner}[1][dr]{\save*!/#1+1.2pc/#1:(1,-1)@^{|-}\restore}
\mdef\Id{\mathrm{Id}}
\mdef\id{\mathrm{id}}
\def\frc#1/#2.{\frac{#1}{#2}}   
\mdef\ten{\mathrel{\otimes}}
\mdef\sqten{\mathrel{\boxtimes}}
\DeclareRobustCommand\widecheck[1]{{\mathpalette\@widecheck{#1}}}
\def\@widecheck#1#2{%
    \setbox\z@\hbox{\m@th$#1#2$}%
    \setbox\tw@\hbox{\m@th$#1%
       \widehat{%
          \vrule\@width\z@\@height\ht\z@
          \vrule\@height\z@\@width\wd\z@}$}%
    \dp\tw@-\ht\z@
    \@tempdima\ht\z@ \advance\@tempdima2\ht\tw@ \divide\@tempdima\thr@@
    \setbox\tw@\hbox{%
       \raise\@tempdima\hbox{\scalebox{1}[-1]{\lower\@tempdima\box
\tw@}}}%
    {\ooalign{\box\tw@ \cr \box\z@}}}
\DeclareMathOperator\colim{colim}
\mdef\we{\overset{\sim}{\longrightarrow}}
\mdef\leftwe{\overset{\sim}{\longleftarrow}}
\let\xto\xrightarrow
\def\rightarrowtailfill@{\arrowfill@{\Yright\joinrel\relbar}\relbar\rightarrow}
\newcommand\xrightarrowtail[2][]{\ext@arrow 0055{\rightarrowtailfill@}{#1}{#2}}
\def\twoheadrightarrowfill@{\arrowfill@{\relbar\joinrel\relbar}\relbar\twoheadrightarrow}
\newcommand\xtwoheadrightarrow[2][]{\ext@arrow 0055{\twoheadrightarrowfill@}{#1}{#2}}
\def\slashedarrowfill@#1#2#3#4#5{%
  $\m@th\thickmuskip0mu\medmuskip\thickmuskip\thinmuskip\thickmuskip
   \relax#5#1\mkern-7mu%
   \cleaders\hbox{$#5\mkern-2mu#2\mkern-2mu$}\hfill
   \mathclap{#3}\mathclap{#2}%
   \cleaders\hbox{$#5\mkern-2mu#2\mkern-2mu$}\hfill
   \mkern-7mu#4$%
}
\def\rightslashedarrowfill@{%
  \slashedarrowfill@\relbar\relbar\mapstochar\rightarrow}
\newcommand\xslashedrightarrow[2][]{%
  \ext@arrow 0055{\rightslashedarrowfill@}{#1}{#2}}
\mdef\hto{\xslashedrightarrow{}}
\mdef\htoo{\xslashedrightarrow{\quad}}
\def\toiso{\xto{\smash{\raisebox{-.5mm}{$\scriptstyle\sim$}}}}
\long\def\my@drawfill#1#2;{%
\@skipfalse
\fill[#1,draw=none] #2;
\@skiptrue
\draw[#1,fill=none] #2;
}
\newif\if@skip
\newcommand{\skipit}[1]{\if@skip\else#1\fi}
\newcommand{\drawfill}[1][]{\my@drawfill{#1}}
\newif\ifhyperref
  \let\your@state\state
  \def\state#1{\gdef\currthmtype{#1}\your@state{#1}}
  \let\your@staterm\staterm
  \def\staterm#1{\gdef\currthmtype{#1}\your@staterm{#1}}
  \let\defthm\newtheorem
  \def\currthmtype{}
    \def\autoref#1{\ref*{label@name@#1}~\ref{#1}}
    \def\autoref#1{\ref{label@name@#1}~\ref{#1}}
    \let\old@label\label%
    \def\label#1{%
      {\let\your@currentlabel\@currentlabel%
        \edef\@currentlabel{\currthmtype}%
        \old@label{label@name@#1}}%
      \old@label{#1}}
    \def\defthm#1#2{%
      \newtheorem{#1}{#2}[section]%
      \expandafter\def\csname #1autorefname\endcsname{#2}%
      \expandafter\let\csname c@#1\endcsname\c@thm}
    \def\defthm#1#2{\newtheorem{#1}[thm]{#2}}
\let\SK@label\label\fi
    \let\old@label\label
    \let\your@thm\@thm
    \def\@thm#1#2#3{\gdef\currthmtype{#3}\your@thm{#1}{#2}{#3}}
    \def\currthmtype{}
    \def\label#1{{\let\your@currentlabel\@currentlabel\def\@currentlabel%
        {\currthmtype~\your@currentlabel}%
        \SK@label{#1@}}\old@label{#1}}
    \def\autoref#1{\ref{#1@}}
\newtheorem{thm}{Theorem}[section]
\iftac\theoremstyle{plain}\else\theoremstyle{definition}\fi
\iftac\theoremstyle{plain}\else\theoremstyle{remark}\fi
\def\thmqedhere{\expandafter\csname\csname @currenvir\endcsname @qed\endcsname}
  \let\c@equation\c@subsection
  \let\c@equation\c@thm
\numberwithin{equation}{section}
\mdef\ep{\varepsilon}
\mdef\ph{\varphi}
\tikzset{lab/.style={auto,font=\scriptsize}} 
\definecolor{fxnote}{rgb}{1.0000,0.0000,0.0000}
\colorlet{fxnotebg}{yellow}
\newcommand{\D}{\sD}
\newcommand{\E}{\sE}
\let\oldboxtimes\boxtimes
\def\boxtimes{\mathrel{\oldboxtimes}}
\newcommand{\fib}{\mathsf{fib}}
\newcommand{\cof}{\mathsf{cof}}
\def\ccsub{_{\mathrm{cc}}}
\def\pdh(#1,#2){\llbracket #1,#2\rrbracket}
\def\ldh(#1,#2){\llbracket #1,#2\rrbracket\ccsub}
\def\pend(#1){\pdh(#1,#1)}
\def\lend(#1){\ldh(#1,#1)}
\def\DTl#1#2#3#4#5#6#7{%
  \xymatrix@C=3pc{{#1} \ar[r]^-{#2} &
    {#3} \ar[r]^-{#4} &
    {#5} \ar[r]^-{#6} &
    {#7}
  }}
\newsavebox{\tvabox}
\savebox\tvabox{\hspace{1mm}\begin{tikzpicture}[>=latex',baseline={(0,-.18)}]
  \draw[->] (0,.1) -- +(1,0);
  \node at (.5,0) {$\scriptscriptstyle\bot$};
  \draw[->] (1,-.1) -- +(-1,0);
  \draw[->] (1,-.2) -- +(-1,0);
\end{tikzpicture}\hspace{1mm}}
\renewcommand{\ex}{\mathrm{ex}}
\newtheorem*{thm*}{\textbf{Theorem}}
\title{Revisiting the canonicity of canonical triangulations}
\author{Moritz Groth}
\address{Rheinische Friedrich-Wilhelms-Universit{\"a}t Bonn, Mathematisches Institut, Ende-nicher Allee 60, 53115 Bonn, Germany}
\email{mgroth@math.uni-bonn.de}
\date{\today}
\begin{document}

\begin{abstract}
Stable derivators provide an enhancement of triangulated categories as is indicated by the existence of canonical triangulations. In this paper we show that exact morphisms of stable derivators induce exact functors of canonical triangulations, and similarly for arbitrary natural transformations. This $2$-categorical refinement also provides a uniqueness statement concerning canonical triangulations.

These results rely on a more careful study of morphisms of derivators and this study is of independent interest. We analyze the interaction of morphisms of derivators with limits, colimits, and Kan extensions, including a discussion of invariance and closure properties of the class of Kan extensions preserved by a fixed morphism. 
\end{abstract}

\maketitle

\tableofcontents

\section{Introduction}
\label{sec:intro}

Abstract stable homotopy theories arise in various areas of pure mathematics such as algebra, geometry, and topology. One typical class of examples is provided by homological algebra. More specifically, associated to a Grothendieck abelian category $\cA$ there is the stable homotopy theory $\nCh(\cA)$ of unbounded chain complexes in~$\cA$. Another typical example of an abstract stable homotopy theory is given by the stable homotopy theory \cSp of spectra in the sense of topology and, in a certain precise sense, this yields the universal example of an abstract stable homotopy theory \cite{franke:adams,heller:stable,HA}.

There are various ways of making precise what one means by an abstract stable homotopy theory, and one of the more classical approaches is provided by \emph{triangulated categories} as introduced by Verdier in \cite{verdier:thesis,verdier:derived} (see also \cite{puppe:stabil}). In the above two specific situations, this leads to Verdier's classical triangulations on derived categories $D(\cA)$ and to Boardman's classical triangulation on \emph{the} stable homotopy category $\mathcal{SHC}$ \cite{boardman:thesis,vogt:boardman}. The basic idea behind the \emph{structure} of a triangulation is that distinguished triangles 
\begin{equation}\label{eq:triang}
X\stackrel{f}{\to} Y\stackrel{g}{\to} Z\stackrel{h}{\to} \Sigma X
\end{equation}
on additive categories (such as $D(\cA)$ or $\mathcal{SHC}$) encode `certain shadows of iterated derived cokernel constructions on the models in the background (such as $\nCh(\cA)$ or \cSp)'.  And this idea is for instance made precise in \cite{groth:intro-to-der-1}.

Correspondingly, there is the notion of an \emph{exact functor} $F\colon\cT\to\cT'$ of triangulated categories, which is roughly defined as an additive functor which sends distinguished triangles to distinguished triangles. To make this precise, exact functors $\cT\to\cT'$ are defined as pairs consisting of an additive functor $F\colon\cT\to\cT'$ and a natural transformation
\begin{equation}\label{eq:intro-exact}
\sigma\colon F\Sigma\to\Sigma F
\end{equation}
such that for every distinguished triangle \eqref{eq:triang} in $\cT$ the image triangle
\[
FX\stackrel{Ff}{\to}FY\stackrel{Fg}{\to}FZ\stackrel{\sigma\circ Fh}{\to}\Sigma FX
\]
is distinguished in $\cT'$. It follows that the \emph{exact structure} $\sigma\colon F\Sigma\to\Sigma F$ is a natural isomorphism, and the existence of such an exact morphism expresses the idea that `on the models in the background there is a morphism of stable homotopy theories which preserves sufficiently finite derived or homotopy (co)limits'.

In this paper we make this second idea precise in the language of derivators \cite{grothendieck:derivators,heller:htpythies,franke:adams}. It is known that the values of a (strong) stable derivator can be turned into triangulated categories \cite{franke:adams,maltsiniotis:seminar,groth:ptstab}, and here we investigate the $2$-functoriality of these triangulations. More precisely, if $F\colon\D\to\E$ is a pointed morphism of pointed derivators, then there is a canonical natural transformation
\begin{equation}\label{eq:intro-can}
\psi\colon\Sigma F\to F\Sigma
\end{equation}
which is compatible with \emph{all} natural transformations of derivators. These canonical transformations \eqref{eq:intro-can} are invertible for \emph{right exact} morphisms, i.e., for morphisms which preserve initial objects and pushouts, and in the stable case the \emph{inverse} transformation $\sigma=\psi^{-1}$ is shown to define an exact structure \eqref{eq:intro-exact} with respect to canonical triangulations. 

Specializing this to identity morphisms it follows that canonical triangulations are unique in a certain precise sense, thereby justifying the terminology. Moreover, if we specialize our results to restriction morphisms and induced transformations, then this shows that (strong) stable derivators admit lifts to the $2$-category of triangulated categories, exact functors, and exact transformations. (It is fairly straightforward to adapt the techniques from this paper in order to establish variants for canonical \emph{higher} triangulations \cite{beilinson:perverse,maltsiniotis:higher,gst:Dynkin-A}.)

A detailed proof of these results turns out to be more lengthy than suggested by the sketch proof of the author in \cite{groth:ptstab}. Hence, by popular demand, such proofs are provided in this paper (a more conceptual perspective on such results will be provided elsewhere). Since many of the other enhancements of triangulated categories (such as stable cofibration categories \cite{schwede:p-order}, stable model categories \cite{hovey:model}, and stable $\infty$-categories \cite{HA}) have underlying homotopy derivators (at least of suitable types), the results obtained here also have implications for these other approaches. In the case of stable cofibration categories a $1$-categorical version of our results was established by Schwede in \cite{schwede:p-order}.

To the opinion of the author, the techniques developed here and leading to the above results are at least as interesting as the results themselves. In \S\S\ref{sec:colimiting-cocones}-\ref{sec:exact-htpy-finite} we collect various tools related to morphisms and natural transformations of derivators which we need for our applications here and which will also prove useful elsewhere (for example in the sequel \cite{groth:char}). While some facts are already spread out in the literature, to the best of the knowledge of the author most of these results have not appeared elsewhere. In the few cases where we reprove a known result, the author claims that the proof given here is simpler than the existing one(s). The reader who prefers to take for granted the existence of a well-behaved formalism of morphisms and natural transformations of derivators is suggested to first focus on \S\S\ref{sec:pointed}-\ref{sec:can-triang} only.

One of the goals in \S\S\ref{sec:colimiting-cocones}-\ref{sec:exact-htpy-finite} is to study in more detail the interaction of morphisms and natural transformations of derivators with limits, colimits, and Kan extensions. This includes a discussion of invariance and closure properties of classes of colimits and left Kan extensions preserved by fixed morphisms of derivators. It turns out that the interaction with left Kan extensions along fully faithful functors is conceptually simpler than the general case. Hence, in this paper we stress that a discussion of the interaction with colimits can be reduced to the conceptually simpler situation by means of the cocone construction. 

To mention an additional specific result, we recall from \cite{ps:linearity} that a right exact morphism of derivators, i.e., a morphism which preserves initial objects and pushouts, already preserves homotopy finite colimits. Having established the basic theory of morphisms of derivators, it is straightforward to conclude that right exact morphisms also preserve left homotopy finite left Kan extensions. The point of this result is that many constructions arising in nature are combinations of such Kan extensions (see for example \cite{gst:basic,gst:tree,gst:Dynkin-A,gst:acyclic}). 

This paper belongs to a project aiming for an abstract study of stability, and the paper can be thought of as a sequel to \cite{groth:ptstab,gps:mayer} and as a prequel to \cite{groth:char}. This abstract study of stability was developed further in the series of papers on abstract representation theory \cite{gst:basic,gst:tree,gst:Dynkin-A,gst:acyclic} and this will be continued in \cite{gst:spectral-serre}. 

The content of the sections is as follows. In \S\ref{sec:colimiting-cocones} we study colimiting cocones in derivators. In \S\S\ref{sec:ctns-mor}-\ref{sec:ctns-closure} we discuss colimit preserving morphisms and establish some closure properties. In \S\ref{sec:paras} we study the compatibility of colimit preserving morphisms and the passage to parametrized versions. In \S\ref{sec:coproducts} we show that coproducts and homotopy coproducts agree in derivators and we obtain a similar result for morphisms of derivators. In \S\ref{sec:pointed} we note that pointed morphisms allow for canonical comparison maps related to suspensions, cones, cofiber sequences, and similar constructions, which we show in \S\ref{sec:exact-htpy-finite} to be invertible for right exact morphisms. In \S\ref{sec:exact-htpy-finite} we also deduce that right exact morphisms preserve left homotopy finite left Kan extensions. In \S\ref{sec:can-triang} we show that exact morphisms of strong stable derivators yield exact functors between canonical (higher) triangulations and similarly for arbitrary natural transformations. This leads to $2$-functoriality and uniqueness results for canonical (higher) triangulations. 

\textbf{Prerequisites.} We assume that the reader is familiar with the \emph{basic} language of derivators. Derivators were introduced independently by Grothendieck~\cite{grothendieck:derivators}, Heller~\cite{heller:htpythies}, and Franke~\cite{franke:adams}, and were developed further by various mathematicians including Maltsiniotis \cite{maltsiniotis:seminar,maltsiniotis:k-theory,maltsiniotis:htpy-exact} and Cisinski \cite{cisinski:direct,cisinski:loc-min,cisinski:derived-kan} (see \cite{grothendieck:derivators} for many additional references). In this paper we continue using the notation and conventions from \cite{gps:mayer} which together with \cite{groth:ptstab} provides some basic background. In particular, the axioms of a derivator are referred to by the names (Der1) (`coproducts are sent to products'), (Der2) (`isomorphisms are pointwise'), (Der3) (`left and right Kan extensions exist'), and (Der4) (`pointwise formulas for Kan extensions'). For a more detailed account of the basic theory we refer the reader to \cite{groth:intro-to-der-1}.

\section{Colimiting cocones in derivators}
\label{sec:colimiting-cocones}

In this section we discuss colimiting cocones in derivators and the construction of canonical comparison maps from colimiting cocones to arbitrary cocones. These notions and results will be used in later sections and also in the sequel \cite{groth:char}.

The \textbf{cocone} $A^\rhd$ of a small category $A$ is obtained by adjoining a new final object $\infty\in A^\rhd$ to $A$, while the \textbf{cone} $A^\lhd$ contains a new initial object $-\infty\in A^\lhd$. Related to these categories there are obvious fully faithful inclusion functors
\[
i_A\colon A\to A^\rhd\qquad\text{and}\qquad i_A\colon A\to A^\lhd.
\]

\begin{defn}\label{defn:colim-cocone}
Let \D be a derivator and let $A\in\cCat$. 
\begin{enumerate}
\item A diagram $X\in\D(A^\rhd)$ is a \textbf{cocone} (with \textbf{base} $i_A^\ast X\in\D(A)$). 
\item A cocone $X\in\D(A^\rhd)$ is \textbf{colimiting} if it lies in the essential image of $(i_A)_!\colon\D(A)\to\D(A^\rhd).$
\end{enumerate}
\end{defn}

\textbf{Cones} and \textbf{limiting cones} in a derivator are defined dually, and the following \textbf{duality principle} allows us often to focus on (colimiting) cocones. 

\begin{lem}\label{lem:duality-cones}
Let $\D\colon\cCat\op\to\cCAT$ be a derivator and let $A$ be a small category. A cocone $X\in\D(A^\rhd)$ is colimiting if and only if the cone $X\in\D\op((A\op)^\lhd)$ is limiting.
\end{lem}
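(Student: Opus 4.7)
The plan is to unfold the definition of the opposite derivator and check that colimiting cocones and limiting cones literally correspond under the passage to opposites, with no substantive derivator-theoretic content beyond basic bookkeeping.

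First I would recall that the opposite of a derivator is defined pointwise by $\D\op(B) = \D(B\op)\op$, and that on functors $u\colon B\to C$ it satisfies $(u\op)^\ast = (u^\ast)\op$. Applied to $B = (A\op)^\lhd$ and using the evident identification $((A\op)^\lhd)\op = A^\rhd$, this gives $\D\op((A\op)^\lhd) = \D(A^\rhd)\op$, so an object $X$ of $\D(A^\rhd)$ and the corresponding object $X$ of $\D\op((A\op)^\lhd)$ are literally the same. Next I would check that the inclusion $i_A\colon A\to A^\rhd$ has opposite $(i_A)\op\colon A\op\to (A\op)^\lhd$, which is exactly the cone inclusion $i_{A\op}$ appearing in the definition of a limiting cone in $\D\op$.

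The key step is to observe that adjunctions reverse when passing to opposites: since $u_!\dashv u^\ast\dashv u_\ast$ in $\D$ is an adjoint string, applying $(-)\op$ levelwise turns $u_!$ into a right adjoint of $(u\op)^\ast$ in $\D\op$, so
\[
(u\op)_\ast = (u_!)\op
\]
as functors $\D\op(B\op)\to\D\op(C\op)$. Specializing to $u = i_A$, the essential image of $(i_A)_!\colon\D(A)\to\D(A^\rhd)$ is carried bijectively to the essential image of $(i_{A\op})_\ast\colon \D\op(A\op)\to\D\op((A\op)^\lhd)$, via the identification of objects from the previous paragraph.

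Putting the two observations together, $X\in\D(A^\rhd)$ lies in the essential image of $(i_A)_!$ if and only if the same object, viewed in $\D\op((A\op)^\lhd)$, lies in the essential image of $(i_{A\op})_\ast$; by definition this is exactly the statement that $X$ is a colimiting cocone in $\D$ if and only if it is a limiting cone in $\D\op$. The only possible obstacle is notational, namely keeping straight which $(-)\op$ acts on small categories and which acts on the values of the derivator; once one sets up the identifications $\D\op(B) = \D(B\op)\op$ and $((A\op)^\lhd)\op = A^\rhd$ carefully, nothing more than the reversal of adjoints is needed.
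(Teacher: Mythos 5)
Your proposal is correct and is precisely the unpacking of what the paper dismisses with ``This is immediate from the definitions'': the identifications $\D\op(B)=\D(B\op)\op$, $((A\op)^\lhd)\op=A^\rhd$, $(i_A)\op=i_{A\op}$, and the reversal of the adjunction giving $(i_{A\op})_\ast=((i_A)_!)\op$ are exactly the bookkeeping intended there. No difference in approach, only in the level of detail.
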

\begin{proof}
This is immediate from the definitions.
\end{proof}

Recall that the calculus of Kan extensions in derivators is governed by the formalism of \textbf{homotopy exact squares}; see \cite{maltsiniotis:htpy-exact} or \cite{groth:ptstab}.

\begin{prop}\label{prop:htpy-ex-cocone}
For $A\in\cCat$ the following squares are homotopy exact,
\begin{equation}\label{eq:htpy-exact-cocone}
\vcenter{
\xymatrix{
A\ar[r]^-\id\ar[d]_-{\pi_A}\drtwocell\omit{}&A\ar[d]^-{i_A}&&
A\ar[r]^-\id\ar[d]_-{\pi_A}&A\ar[d]^-{i_A}\\
\bbone\ar[r]_-\infty&A^\rhd,&&
\bbone\ar[r]_-{-\infty}&A^\lhd.\ultwocell\omit{}
}
}
\end{equation}
\end{prop}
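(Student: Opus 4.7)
The plan is to reduce both squares to instances of the canonical Der4 comma square, exploiting the terminality of $\infty$ in $A^\rhd$ (and dually the initiality of $-\infty$ in $A^\lhd$).

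First I would dispatch the second square by a duality argument. Passing to the opposite derivator $\D\op$ and replacing $A$ by $A\op$, the identifications $(A\op)^\rhd \cong (A^\lhd)\op$ and $\infty \leftrightarrow -\infty$ convert the right-hand square for $(A,\D)$ into the left-hand square for $(A\op,\D\op)$. Homotopy exactness is preserved under this passage since the Beck--Chevalley mates on the two sides are swapped. Hence it is enough to verify that the first square is homotopy exact for every $A$ and every derivator $\D$.

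For the first square, I would compute the Beck--Chevalley mate
\[
\pi_{A,!} \longrightarrow \infty^\ast (i_A)_!
\]
via the pointwise formula (Der4). This formula identifies $\infty^\ast (i_A)_!$ with the left Kan extension along the projection from the comma category $(i_A/\infty)$ to $\bbone$, pre-composed with restriction along $(i_A/\infty)\to A$. Now the key observation: an object of $(i_A/\infty)$ is a pair $(a,\varphi)$ with $a\in A$ and $\varphi\colon i_A(a)\to\infty$ in $A^\rhd$; since $\infty$ is terminal in $A^\rhd$ there is a unique such $\varphi$ for each $a$, and analogously for morphisms. Consequently the forgetful functor $(i_A/\infty)\to A$ is an isomorphism of categories. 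Transporting the Der4 mate along this isomorphism turns it into the mate of our original square, which is therefore invertible.

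The only real work is a bookkeeping check: one must verify that the 2-cell attached to the canonical Der4 comma square corresponds, under the iso $A\cong (i_A/\infty)$, to our 2-cell $i_A \Rightarrow \infty\circ\pi_A$ (given pointwise by the unique morphisms $a\to\infty$ in $A^\rhd$). This is forced by the universal property of the comma category, so I do not anticipate any substantive obstacle. The whole argument is essentially a one-line observation about the terminality of the cocone point, upgraded through Der4 and the duality principle.
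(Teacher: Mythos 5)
Your proposal is correct and is essentially the paper's own argument: the paper simply observes that both squares are (isomorphic to) slice squares, hence homotopy exact by (Der4), and your identification $(i_A/\infty)\cong A$ via the terminality of $\infty$ is exactly the content of that observation, spelled out. The duality reduction for the second square is likewise consistent with the paper's implicit use of the dual form of (Der4).
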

\begin{proof}
As slice squares these squares are homotopy exact by (Der4).
\end{proof}

More explicitly, the proposition says that for a derivator \D and $X\in\D(A)$ there are canonical isomorphisms
\[
\colim_AX\toiso(i_A)_!(X)_\infty\qquad\text{and}\qquad (i_A)_\ast(X)_{-\infty}\toiso\mathrm{lim}_A X.
\]
Such isomorphisms characterize colimiting cocones and limiting cones, as made precise by the following proposition, in which we consider the squares
\begin{equation}\label{eq:htpy-exact-cocone-2}
\vcenter{
\xymatrix{
A\ar[r]^{i_A}\ar[d]_{\pi_A} \drtwocell\omit &A^\rhd\ar[d]^\id&&
A\ar[r]^{i_A}\ar[d]_{\pi_A}  &A^\lhd\ar[d]^\id\\
\bbone\ar[r]_{\infty}&A^\rhd,&&
\bbone\ar[r]_{-\infty}&A^\lhd.\ultwocell\omit
}
}
\end{equation}

\begin{prop}\label{prop:cone}
Let \D be a derivator and let $A\in\cCat$.
The left Kan extension functor $(i_A)_!\colon\D(A)\to\D(A^\rhd)$ is fully faithful and $X\in\D(A^\rhd)$ lies in the essential image if and only if the canonical mate
\begin{equation}\label{eq:cone-II}
\colim_A i_A^\ast X\to X_\infty
\end{equation}
associated to the left square in \eqref{eq:htpy-exact-cocone-2} is an isomorphism.
\end{prop}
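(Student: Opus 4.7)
The plan is to establish fully faithfulness first, and then reduce the essential image characterization to a pointwise check using (Der2), with the only non-trivial point being $\infty\in A^\rhd$.

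First I would show that $(i_A)_!$ is fully faithful by proving that the unit $\eta\colon\id\to i_A^\ast(i_A)_!$ is an isomorphism. Since $i_A\colon A\to A^\rhd$ is fully faithful, the slice square
\[
\xymatrix{
(i_A/a)\ar[r]\ar[d]\drtwocell\omit{}&A\ar[d]^-{i_A}\\
\bbone\ar[r]_-{i_A(a)=a}&A^\rhd
}
\]
at any $a\in A$ has initial slice category $(i_A/a)$ with terminal object $(a,\id_a)$, and is therefore homotopy exact by (Der4). Applying this pointwise via (Der2) gives that $\eta$ is an isomorphism, which by standard adjoint yoga is equivalent to $(i_A)_!$ being fully faithful.

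Next, because $(i_A)_!$ is fully faithful, a diagram $X\in\D(A^\rhd)$ lies in the essential image if and only if the counit
\[
\varepsilon\colon (i_A)_!\, i_A^\ast X\longrightarrow X
\]
is an isomorphism. By (Der2), this may be tested pointwise. For $a\in A$, the map $a^\ast\varepsilon$ is an isomorphism automatically: factoring $a\colon\bbone\to A^\rhd$ through $i_A$ and invoking the unit isomorphism just established gives $a^\ast (i_A)_! i_A^\ast X \cong a^\ast X$ via $\varepsilon$. So the only remaining condition is that $\infty^\ast\varepsilon$ be an isomorphism.

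For the point $\infty$, the left homotopy exact square in \eqref{eq:htpy-exact-cocone} supplies a canonical isomorphism $\infty^\ast(i_A)_!\cong\pi_{A,!}=\colim_A$, and hence
\[
\infty^\ast (i_A)_!\, i_A^\ast X \;\cong\; \colim_A i_A^\ast X.
\]
Under this identification, $\infty^\ast\varepsilon$ is precisely the canonical mate \eqref{eq:cone-II} of the left square in \eqref{eq:htpy-exact-cocone-2}, which is obtained by pasting the square \eqref{eq:htpy-exact-cocone-2} with the counit $(i_A)_! i_A^\ast\to\id$. This combines the two pointwise cases to give the characterization in the statement.

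There is no real obstacle here: the fully faithfulness step is a direct consequence of (Der4) applied to a slice square with a terminal object, and the pointwise analysis is routine modulo identifying the mate of \eqref{eq:htpy-exact-cocone-2} with the pointwise component of the counit at $\infty$. The only point that warrants care is this last compatibility, which is a formal 2-categorical manipulation of mates and could be spelled out by factoring the counit square through the homotopy exact square from Proposition~\ref{prop:htpy-ex-cocone}.
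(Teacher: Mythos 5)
Your proposal is correct and follows essentially the same route as the paper: fully faithfulness of $(i_A)_!$ from fully faithfulness of $i_A$, reduction of the essential-image condition to invertibility of the counit at $\infty$ (you inline the argument that the counit is automatically invertible at points of $A$, where the paper cites a lemma from \cite{groth:ptstab}), and then the identification of $\infty^\ast\varepsilon$ with the mate \eqref{eq:cone-II} by writing the left square of \eqref{eq:htpy-exact-cocone-2} as a pasting with the homotopy exact square of \autoref{prop:htpy-ex-cocone} and invoking functoriality of mates with pasting. The only quibble is the phrase ``initial slice category'', which should just read that $(i_A/a)$ has the terminal object $(a,\id_a)$.
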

\begin{proof}
Since $i_A$ is fully faithful, so is $(i_A)_!\colon\D(A)\to\D(A^\rhd)$ and the essential image consists by \cite[Lem.~1.21]{groth:ptstab} precisely of those $X$ such that $\varepsilon\colon (i_A)_! i_A^\ast(X)\to X$ is an isomorphism at $\infty$. Note that the left square in \eqref{eq:htpy-exact-cocone-2} can also be written as the pasting
\[
\xymatrix{
A\ar[r]^-\id\ar[d]_-{\pi_A}\drtwocell\omit{}&A\ar[r]^-{i_A}\ar[d]_-{i_A}\drtwocell\omit{\id}&A^\rhd\ar[d]^-\id\\
\bbone\ar[r]_-\infty&A^\rhd\ar[r]_-\id&A^\rhd.
}
\]
Since the left square in this pasting is homotopy exact (\autoref{prop:htpy-ex-cocone}), the functoriality of mates with pasting allows us to conclude that $X$ lies in the essential image of $(i_A)_!$ if and only if the canonical mate \eqref{eq:cone-II} is an isomorphism.
\end{proof}

Thus, a cocone is colimiting in the sense of \autoref{defn:colim-cocone} if and only if the apex of it is canonically the colimit of the restriction to the base, justifying the terminology. 

The canonical morphism \eqref{eq:cone-II} also admits a different description which is inspired by the following trivial observation from ordinary category theory. Let \cC be a cocomplete category and let $G\colon A^\rhd\to\cC$ be a cocone on $F=Gi_A\colon A\to\cC$. It is immediate from the definition of a colimit as an initial cocone, that there is always a comparison map from the colimiting cocone on $F$ to~$G$ and that this comparison map is an isomorphism if and only if~$G$ is a colimiting cocone. To extend this to derivators we make the following construction.

\begin{con}\label{con:mor-of-cones}
We note that the cocone construction $A\mapsto A^\rhd$ is functorial, thereby defining $(-)^\rhd\colon\cCat\to\cCat$, and that the fully faithful functors $i_A\colon A\to A^\rhd$ for $A\in\cCat$ define a natural transformation
\begin{equation}\label{eq:i-j}
i\colon \id_{\cCat}\to(-)^\rhd\colon\cCat\to\cCat.
\end{equation}

If $A$ is a small category, then we can iterate the cocone construction and obtain the category $(A^\rhd)^\rhd$. This category is obtained from $A^\rhd$ by adding a new terminal object $\infty+1$. In particular, there is thus a morphism $\infty\to\infty+1$. (Similarly, in $(A^\lhd)^\lhd$ there is a morphism $-\infty-1\to -\infty$.) The category $(A^\rhd)^\rhd$ corepresents morphisms of cocones. In more detail, related to this category there are the following two functors.
\begin{enumerate}
\item The functor $s_A=i_{A^\rhd}\colon A^\rhd\to(A^\rhd)^\rhd$ is the component of the natural transformation $i$ at $A^\rhd$. Thus, the behavior of $s_A$ on objects is given by $a\mapsto a$ and $\infty\mapsto\infty$ and, given a morphism of cocones, restriction along $s_A$ yields the source of this morphism. 
\item In a similar way we also have the functor $t_A= i_A^\rhd\colon A^\rhd\to(A^\rhd)^\rhd$ obtained from $i_A\colon A\to A^\rhd$ by an application of the cocone functor. On objects the functor $t_A$ is given by $a\mapsto a$ and $\infty\mapsto \infty+1$ and, given a morphism of cocones, restriction along $t_A$ yields the target of this morphism.
\end{enumerate}
(An alternative description of the category $(A^\rhd)^\rhd$ is as the \emph{join} $A\ast[1]$ of $A$ and $[1]=(0<1)$. In that description, the above functors are induced by $0,1\colon\bbone\to[1]$, i.e., we have $s_A=\id_A\ast 0$ and $t_A=\id_A\ast 1$.)

If \D is a derivator, then we refer to $\D((A^\rhd)^\rhd)$ as the category of \textbf{morphisms of cocones} (with base~$A$). The category comes with \textbf{source} and \textbf{target functors} 
\[
s_A^\ast,t_A^\ast\colon\D((A^\rhd)^\rhd)\to\D(A^\rhd).
\] 
\end{con}

We now show that $(t_A)_!$ forms the intended comparison maps. Related to \autoref{con:mor-of-cones} there are the naturality squares
\begin{equation}\label{eq:cone-cone}
\vcenter{
\xymatrix{
A\ar[r]^-{i_A}\ar[d]_-{i_A}\drtwocell\omit{\id}&A^\rhd\ar[d]^-{t_A}&&
A\ar[r]^-{i_A}\ar[d]_-{i_A}&A^\lhd\ar[d]^-{t_A}\\
A^\rhd\ar[r]_-{s_A}&(A^\rhd)^\rhd,&&
A^\lhd\ar[r]_-{s_A}&(A^\lhd)^\lhd.\ultwocell\omit{\id}
}
}
\end{equation}

\begin{lem}[{\cite[Lem.~8.6]{gst:basic}}]\label{lem:he-cone-2}
For every $A\in\cCat$ the squares \eqref{eq:cone-cone} are homotopy exact.
\end{lem}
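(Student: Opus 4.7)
The plan is to verify homotopy exactness of the first (cocone) square by checking the canonical mate is a pointwise isomorphism; the second (cone) square will then follow by the duality principle of \autoref{lem:duality-cones} applied to the cone/cocone involution on $\cCat$. Write
\[
\mu\colon (i_A)_!(i_A)^*\Rightarrow (s_A)^*(t_A)_!
\]
for the canonical mate associated to the first square. By (Der2) it suffices to show that $\mu$ is invertible when evaluated at each object $c\in A^\rhd$.

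Next I would invoke the pointwise formula (Der4) applied to both $(i_A)_!$ and $(t_A)_!$ to express the two sides at $c$ as colimits of slice categories: the left side as $\colim_{i_A/c}$ of the pullback of $i_A^*X$ along the forgetful projection to $A$, and the right side as $\colim_{t_A/s_A(c)}$ of the pullback of $X$ along the forgetful projection to $A^\rhd$. Because the 2-cell of the square is an identity, there is a canonical functor $F_c\colon i_A/c\to t_A/s_A(c)$ sending $(a,\,g\colon i_A(a)\to c)$ to $(i_A(a),\,s_A(g))$, and the usual naturality of the pointwise formula with respect to mates identifies $\mu$ at $c$ with the map on colimits induced by $F_c$.

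I would then argue by case analysis that $F_c$ is an isomorphism of categories. For $c=a\in A$, the terminality of $\infty+1$ in $(A^\rhd)^\rhd$ combined with $t_A(\infty)=\infty+1$ forces every object of $t_A/a$ to have its first entry in $A$, and the fully faithfulness of $i_A$ then identifies both slices with $A/a$ in a way compatible with the projections to $A$; the functor $F_a$ is the identity. For $c=\infty$, note that $s_A(\infty)=\infty$ while the only nontrivial morphism between the two new vertices of $(A^\rhd)^\rhd$ runs $\infty\to\infty+1$, so there is no morphism $t_A(\infty)=\infty+1\to\infty$; hence only objects of $A$ contribute to $t_A/\infty$, which is therefore isomorphic to $i_A/\infty\cong A$ via its evident projection, and again $F_\infty$ is the identity. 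In both cases the induced comparison of colimits is an isomorphism, so $\mu$ is pointwise invertible.

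The main obstacle is the combinatorial analysis at $c=\infty$: one has to pin down the orientation of the new edge $\infty\to\infty+1$ in $(A^\rhd)^\rhd$ in order to exclude the old tip $\infty\in A^\rhd$ from $t_A/s_A(\infty)$, and then match the remaining data to a copy of $A$ with the correct projection. Once this bookkeeping is in place, the identification of the slices and the verification that $F_c$ is an identity on the nose are both immediate, so no further work with explicit diagram chases of unit/counit is needed.
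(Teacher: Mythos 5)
Your proof is correct, and the underlying combinatorics is the same as in the paper: everything reduces to the observation that there is no morphism $\infty+1\to\infty$ in $(A^\rhd)^\rhd$, so that $(t_A/\infty)\cong A$ compatibly with the projections. The packaging differs, however. The paper first invokes \autoref{lem:he-ex-ff} (using that both vertical functors are fully faithful) to reduce to checking the mate at the single object $\infty\in A^\rhd$, and then identifies an explicit pasting with a slice square so that (Der4) applies directly. You instead check the mate at every $c\in A^\rhd$ by appealing to the general principle that, under the pointwise formulas of (Der4), the mate at $c$ is the colimit comparison along the induced functor $F_c\colon (i_A/c)\to(t_A/s_A(c))$ on slices. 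That principle is standard (it is the usual criterion that a square is homotopy exact if and only if these comparison functors are homotopy final), but it is not isolated as a lemma in this paper, which manufactures the needed instances by hand via pastings; if you want a self-contained argument you should either cite it or reproduce the pasting that justifies it. Your route costs you the extra (easy) case $c=a\in A$, which the paper's full-faithfulness reduction renders unnecessary, and buys you independence from \autoref{lem:he-ex-ff}. One small inaccuracy: the duality reducing the right-hand square to the left-hand one is the duality for homotopy exact squares (pass to $\D\op$ and opposite categories with the opposite $2$-cell), not \autoref{lem:duality-cones}, which concerns colimiting cocones; the intended reduction is nonetheless standard and correct.
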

\begin{proof}
We take care of the square on the left. Since the square commutes and since the vertical functors are fully faithful, it suffices by \autoref{lem:he-ex-ff} to show that the canonical mate is an isomorphism at $\infty\in A^\rhd$. To reformulate this we consider the pasting on the left in 
\[
\xymatrix{
A\ar[r]^-\id\ar[d]&A\ar[r]^-{i_A}\ar[d]_-{i_A}&A^\rhd\ar[d]^-{t_A}&&A\ar[r]^-{i_A}\ar[d]&A^\rhd\ar[d]^-{t_A}\\
\bbone\ar[r]_-\infty&A^\rhd\ar[r]_-{s_A}&(A^\rhd)^\rhd,&&\bbone\ar[r]_-\infty&(A^\rhd)^\rhd.
}
\]
As the above two pastings agree, the compatibility of mates with pasting and the homotopy exactness of the square to the left (\autoref{prop:htpy-ex-cocone}) imply that it suffices to show that the square to the very right is homotopy exact. As this square is isomorphic to a slice square we are done by (Der4). 
\end{proof}

To conclude the proof of \autoref{lem:he-cone-2} it remains to establish the following lemma which is also of independent interest. In that lemma we consider a natural isomorphism living in a square of small categories 
\begin{equation}\label{eq:base}
\vcenter{
\xymatrix{
A'\ar[r]^-j\ar[d]_-{u'}\drtwocell\omit{\cong}&A\ar[d]^-u\\
B'\ar[r]_-k&B.
}
}
\end{equation}
The point of the following result is that it suffices to check objects in $B'\text{-}u'(A')$.

\begin{lem}\label{lem:he-ex-ff}
Let \eqref{eq:base} be a natural isomorphism in \cCat such that $u$ and $u'$ are fully faithful. The square \eqref{eq:base} is homotopy exact if and only if for all derivators the canonical mate $\beta_{b'}\colon (u'_! j^\ast)_{b'}\to (k^\ast u_!)_{b'}$ is an isomorphism for all $b'\in B'\text{-}u'(A')$.
\end{lem}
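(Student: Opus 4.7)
The plan is to prove the nontrivial direction — the ``if'' direction — by showing that the mate $\beta_{b'}\colon(u'_!j^\ast X)_{b'}\to(k^\ast u_!X)_{b'}$ is also automatically an isomorphism for every $b'\in u'(A')$, so that by (Der2) applied pointwise the mate is an isomorphism everywhere, and hence the square \eqref{eq:base} is homotopy exact by definition. The ``only if'' direction is immediate from the definition of homotopy exactness.

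First I would record the slogan we need for the fully faithful functors $u$ and $u'$: for a fully faithful functor $v\colon C\to D$ and an object $c\in C$, the naturality square
\[
\vcenter{\xymatrix{
\bbone\ar[r]^-{c}\ar[d]_-\id\drtwocell\omit{\id}&C\ar[d]^-{v}\\
\bbone\ar[r]_-{v(c)}&D
}}
\]
is homotopy exact. (This is a standard consequence of fully faithfulness; one can view it as a slice square under the identification $(D/v(c))\simeq(C/c)$ along $v$, and invoke (Der4), or alternatively deduce it from the fact that the unit $\eta\colon\id\to v^\ast v_!$ is an isomorphism.) Apply this to both $u$ and $u'$.

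Now fix $a'\in A'$ and consider the pasting on the left,
\[
\xymatrix{
\bbone\ar[r]^-{a'}\ar[d]_-\id\drtwocell\omit{\id}&A'\ar[r]^-{j}\ar[d]_-{u'}\drtwocell\omit{\cong}&A\ar[d]^-{u}&&\bbone\ar[r]^-{j(a')}\ar[d]_-\id\drtwocell\omit{\id}&A\ar[d]^-{u}\\
\bbone\ar[r]_-{u'(a')}&B'\ar[r]_-{k}&B,&&\bbone\ar[r]_-{uj(a')}&B,
}
\]
which, using the natural isomorphism $ku'\cong uj$ in \eqref{eq:base}, is naturally isomorphic to the pasting on the right. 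The right pasting is homotopy exact by the previous paragraph applied to $u$ and $j(a')$. By the compatibility of mates with pasting and the fact that isomorphic natural transformations have isomorphic mates, the mate of the right pasting agrees (up to the given coherence isomorphism) with the horizontal composite of the mate of the left-hand square of the left pasting and the mate $\beta$ of \eqref{eq:base} restricted to $a'$. The left-hand square of the left pasting is homotopy exact by the previous paragraph applied to $u'$ and $a'$, so its mate is an isomorphism. We conclude that $\beta_{u'(a')}$ is an isomorphism, as desired.

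The main subtlety is the careful bookkeeping of the pasting argument in the presence of the nontrivial $2$-cell in \eqref{eq:base}: one must use that the mate construction is functorial with respect to pasting and takes isomorphic $2$-cells to isomorphic mates, so that the isomorphism $ku'\cong uj$ can be absorbed without obstructing the conclusion. Everything else is a direct application of (Der2) and the fully faithful square lemma recalled above.
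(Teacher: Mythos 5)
Your proof is correct and follows essentially the same route as the paper: reduce via (Der2) to objects of the form $u'(a')$, paste \eqref{eq:base} with the point-classifying square for $u'$ at $a'$, and transfer along the isomorphism $ku'\cong uj$ to the corresponding square for $u$ at $j(a')$. The only difference is cosmetic: you isolate the homotopy exactness of the naturality square of a fully faithful functor at an object as a standalone lemma (cleanly justified by the unit $\id\to v^\ast v_!$ being invertible), whereas the paper obtains the same fact inside the pasting by combining a slice square with the square classifying the terminal object of the comma category $(u'/u'(a'))$.
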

\begin{proof}
By axiom (Der2) of a derivator we have to show that, under the above assumptions, the canonical mate is always an isomorphism at objects of the form $u'(a'), a'\in A'$. To this end, we consider the pasting on the left in 
\[
\xymatrix@-0.5pc{
\bbone\ar[r]\ar[d]_-\id\drtwocell\omit{\id}&(u'/u'(a'))\ar[r]\ar[d]_-\pi\drtwocell\omit{}&A'\ar[r]^-j\ar[d]^-{u'}\drtwocell\omit{\cong}&A\ar[d]^-u&&
\bbone\ar[r]\ar[d]_-\id\drtwocell\omit{\id}&(u/uj(a'))\ar[r]\ar[d]_-\pi\drtwocell\omit{}&A\ar[d]^-u\\
\bbone\ar[r]_-\id&\bbone\ar[r]_-{u'(a')}&B'\ar[r]_-k&B,&&
\bbone\ar[r]_-\id&\bbone\ar[r]_-{uj(a')}&B,
}
\]
in which the square in the middle is a slice square and hence homotopy exact by axiom (Der4). The morphism $\bbone\to(u'/u'(a'))$ classifies the terminal object $(a',\id)$ (since $u'$ is fully faithful, this is a terminal object), and the square on the left is hence also homotopy exact (\cite[Prop.~1.18]{groth:ptstab}). The functoriality of mates with pasting implies that $\beta_{u'(a')}$ is an isomorphism if and only if the canonical mate associated to the pasting on the left is an isomorphism. 

In the pasting on the right, the square on the right is a slice square and the left square is induced by the functor $\bbone\to (u/uj(a'))$ classifying the terminal object $(j(a'),\id\colon uj(a')\to uj(a'))$ (using this time that $u$ is fully faithful). Up to a vertical pasting with the component of the isomorphism~\eqref{eq:base} at $a'$, the above two pasting agree. Similar arguments as above hence show that the pasting on the right and thus also the pasting on the left is homotopy exact, thereby concluding the proof.
\end{proof}

Thus, by \autoref{lem:he-cone-2} the source $s_A^\ast(t_A)_!(X)$ of the morphism of cocones $(t_A)_!(X)$ is a colimiting cocone, and this already characterizes the essential image of $(t_A)_!$.

\begin{prop}\label{prop:ess-im-target}
Let \D be a derivator and let $A\in\cCat$. The left Kan extension $(t_A)_!\colon\D(A^\rhd)\to\D((A^\rhd)^\rhd)$ is fully faithful and $Y\in\D((A^\rhd)^\rhd)$ lies in the essential image if and only if the source cocone $s_A^\ast Y\in\D(A^\rhd)$ is colimiting.
\end{prop}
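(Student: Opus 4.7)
My plan is to mirror the proof of \autoref{prop:cone} structurally. I will first establish that $t_A = i_A^\rhd$ is fully faithful, since the cocone construction preserves fully faithful functors and $i_A$ is one. By the same fact \cite[Lem.~1.21]{groth:ptstab} used in \autoref{prop:cone}, this will imply that $(t_A)_!$ is fully faithful and that $Y$ lies in the essential image iff the counit $\varepsilon_Y\colon (t_A)_! t_A^\ast Y \to Y$ is an isomorphism. Since $t_A$ is injective on objects with image $A \cup \{\infty+1\}$ and the counit is automatically an isomorphism at these objects, by (Der2) the only non-trivial component left to check is at the single object $\infty \in (A^\rhd)^\rhd$ not in the image of $t_A$.

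Next, I will invoke \autoref{lem:he-cone-2}, which says the left square of \eqref{eq:cone-cone} is homotopy exact, yielding a natural isomorphism $\kappa\colon (i_A)_! i_A^\ast \toiso s_A^\ast (t_A)_!$. Using that $s_A i_A = t_A i_A$ as functors $A \to (A^\rhd)^\rhd$ (both are the inclusion), we have $i_A^\ast t_A^\ast = i_A^\ast s_A^\ast$, so $\kappa_{t_A^\ast Y}$ can be read as a morphism $(i_A)_! i_A^\ast s_A^\ast Y \to s_A^\ast(t_A)_! t_A^\ast Y$. A standard compatibility of mates with counits then identifies the composite $s_A^\ast(\varepsilon_Y) \circ \kappa_{t_A^\ast Y}$ with the counit $\varepsilon'_{s_A^\ast Y}$ of the adjunction $(i_A)_! \dashv i_A^\ast$ at $s_A^\ast Y$.

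Combining: $\varepsilon_Y$ is an isomorphism iff its component at $\infty \in (A^\rhd)^\rhd$ is, and since $s_A(\infty) = \infty$ while $(s_A^\ast\varepsilon_Y)_a = (\varepsilon_Y)_a$ is automatic for $a \in A$, this happens iff $s_A^\ast \varepsilon_Y$ is a global isomorphism on $A^\rhd$, hence (by the identification above) iff $\varepsilon'_{s_A^\ast Y}$ is an isomorphism, hence (by \autoref{prop:cone}) iff $s_A^\ast Y$ is colimiting. I expect the main technical hurdle to be the mate-identification in the middle paragraph, which is purely formal but requires careful bookkeeping with pasting of $2$-cells and the functoriality of the mate construction. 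An alternative bypass would be to compute $(\varepsilon_Y)_\infty$ directly via the slice square at $\infty \in (A^\rhd)^\rhd$: since there is no morphism $\infty+1 \to \infty$, the slice $(t_A/\infty)$ reduces to $A$, and $(\varepsilon_Y)_\infty$ can be recognized term-by-term as the mate \eqref{eq:cone-II} of \autoref{prop:cone} applied to $s_A^\ast Y$, bypassing the need for the intermediate abstract identification entirely.
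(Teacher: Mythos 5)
Your proposal is correct, and the overall strategy coincides with the paper's: reduce via fully faithfulness of $(t_A)_!$ to the counit $\varepsilon_Y$ being invertible at the single object $\infty\in(A^\rhd)^\rhd$ outside the image of $t_A$, and then use the homotopy exactness of the square relating $s_A$, $t_A$, and $i_A$ to translate this into the colimiting property of $s_A^\ast Y$. Where you differ is in how the translation is organized. The paper pastes the counit square for $(t_A)_!$ with the square $(\pi_A,i_A,\infty,t_A)$, observes that the latter is isomorphic to a slice square (since $(t_A/\infty)\cong A$, there being no morphism $\infty+1\to\infty$), and then compares this pasting with a second pasting built from \autoref{prop:htpy-ex-cocone} and \eqref{eq:cone-II}; your ``alternative bypass'' is essentially this argument. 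Your main route instead quotes \autoref{lem:he-cone-2} as a black box to get the isomorphism $\kappa\colon (i_A)_!i_A^\ast\toiso s_A^\ast(t_A)_!$ and verifies the identity $s_A^\ast(\varepsilon_Y)\circ\kappa_{t_A^\ast Y}=\varepsilon'_{s_A^\ast Y}$ directly; this identity does hold, by naturality of the counit $\varepsilon'$ of $((i_A)_!,i_A^\ast)$ together with the triangle identity $t_A^\ast\varepsilon\circ\eta t_A^\ast=\id$, so the ``careful bookkeeping'' you anticipate is a short two-line computation rather than a hurdle. What your version buys is that the slice computation $(t_A/\infty)\cong A$ is done only once (inside \autoref{lem:he-cone-2}) rather than being redone; what the paper's version buys is uniformity, since the same pasting template is used verbatim in \autoref{prop:cone} and \autoref{prop:cocone-comparison}. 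One cosmetic point: in your last step, the equivalence ``$\varepsilon'_{s_A^\ast Y}$ invertible iff $s_A^\ast Y$ is colimiting'' is really the fully-faithfulness criterion \cite[Lem.~1.21]{groth:ptstab} applied to $(i_A)_!$ together with \autoref{defn:colim-cocone}, rather than the statement of \autoref{prop:cone} itself, though that criterion is of course contained in its proof.
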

\begin{proof}
Since $t_A$ is fully faithful, so is $(t_A)_!\colon\D(A^\rhd)\to\D((A^\rhd)^\rhd)$ and the essential image consists by \cite[Lem.~1.21]{groth:ptstab} precisely of those $Y$ such that the counit $\varepsilon\colon (t_A)_! t_A^\ast(Y)\to Y$ is an isomorphism at $\infty$. To reformulate this we consider the pasting on the left in
\begin{equation}\label{eq:prop-morph-cone}
\vcenter{
\xymatrix{
A\ar[r]^-{i_A}\ar[d]_-{\pi_A}\drtwocell\omit{}&A^\rhd\ar[r]^-{t_A}\ar[d]_-{t_A}\drtwocell\omit{\id}&(A^\rhd)^\rhd\ar[d]^-\id&
A\ar[r]^-\id\ar[d]_-{\pi_A}\drtwocell\omit{}&A\ar[r]^-{i_A}\ar[d]_-{i_A}\drtwocell\omit{\id}&A^\rhd\ar[r]^-{s_A}\ar[d]^-\id\drtwocell\omit{\id}&(A^\rhd)^\rhd\ar[d]^-\id\\
\bbone\ar[r]_-\infty&(A^\rhd)^\rhd\ar[r]_-\id&(A^\rhd)^\rhd,&
\bbone\ar[r]_\infty&A^\rhd\ar[r]_-\id&A^\rhd\ar[r]_-{s_A}&(A^\rhd)^\rhd.
}
}
\end{equation}
In this pasting the square on the left is isomorphic to a slice square and hence homotopy exact by (Der4). Thus, the functoriality of mates with pasting implies that $Y$ lies in the essential image of $(t_A)_!$ if and only if the canonical mate of this pasting is an isomorphism on $Y$. Since the above two pastings agree, similar arguments including \autoref{prop:htpy-ex-cocone} show that the canonical mate of the pasting on the right is an isomorphism if and only if the source cocone $s_A^\ast Y$ is colimiting.
\end{proof}

\begin{defn}\label{defn:cocone-comp-map}
Let \D be a derivator, $A\in\cCat$, and $X\in\D(A^\rhd)$. The \textbf{(cocone) comparison map} is the coherent morphism
\begin{equation}\label{eq:cocone-comp-map}
(t_A)_!(X)_\infty\to(t_A)_!(X)_{\infty+1}.
\end{equation}
\end{defn}

\begin{prop}[{\cite[Lem.~8.7]{gst:basic}}]\label{prop:cocone-comparison}
Let \D be a derivator and let $A$ be small category. A cocone $X\in\D(A^\rhd)$ is colimiting if and only if the comparison map~\eqref{eq:cocone-comp-map} is an isomorphism.
\end{prop}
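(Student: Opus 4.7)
The strategy is to identify the two fibers of $(t_A)_!(X)$ appearing in the comparison map with, respectively, the colimit of the base of $X$ and its apex $X_\infty$, and then to recognize the comparison map itself as the canonical mate \eqref{eq:cone-II} of \autoref{prop:cone}. Once this is done, the statement reduces immediately to that proposition.

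The first identification uses that $t_A$ is fully faithful, hence so is $(t_A)_!$, so the unit $\eta\colon X\to t_A^\ast(t_A)_!(X)$ is an isomorphism. Evaluating at $\infty\in A^\rhd$ and using $t_A(\infty)=\infty+1$ yields a canonical isomorphism $X_\infty\toiso (t_A)_!(X)_{\infty+1}$. The second identification uses \autoref{lem:he-cone-2}: the canonical mate associated to the left square of \eqref{eq:cone-cone} is an isomorphism $(i_A)_!i_A^\ast X\toiso s_A^\ast(t_A)_!(X)$. Evaluating at $\infty\in A^\rhd$ and combining with \autoref{prop:htpy-ex-cocone} applied to $(i_A)_!i_A^\ast X$ produces a canonical isomorphism $\colim_A i_A^\ast X\toiso (t_A)_!(X)_\infty$.

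With these identifications in place, the key step is to verify that the comparison map \eqref{eq:cocone-comp-map} corresponds under them to the canonical mate $\colim_A i_A^\ast X\to X_\infty$ of \autoref{prop:cone}. The arrow $\infty\to\infty+1$ in $(A^\rhd)^\rhd$ is precisely the component at $\infty\in A^\rhd$ of the canonical natural transformation $s_A\to t_A$ (which is the identity on $A\subseteq A^\rhd$ and the new terminal arrow at~$\infty$). Thus the coherent morphism \eqref{eq:cocone-comp-map} is the whiskering of this natural transformation with $(t_A)_!(X)$, restricted to $\bbone$ via $\infty\in A^\rhd$. Unwinding the mate formalism and applying the compatibility of mates with pasting to the diagrams appearing in \eqref{eq:prop-morph-cone} identifies this whiskered arrow, up to the canonical isomorphisms above, with the component at $\infty$ of the counit $\varepsilon\colon (i_A)_!i_A^\ast X\to X$, which is exactly the mate \eqref{eq:cone-II}. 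Now \autoref{prop:cone} asserts that this mate is an isomorphism if and only if $X$ lies in the essential image of $(i_A)_!$, i.e., if and only if $X$ is a colimiting cocone.

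\textbf{Main obstacle.} The two fiber identifications are routine consequences of full faithfulness and of \autoref{lem:he-cone-2}. The technically delicate step is the third one: carefully tracking the mate calculus through the iterated cocone construction to match the coherent morphism \eqref{eq:cocone-comp-map} with the mate \eqref{eq:cone-II}. This is a $2$-categorical pasting argument, essentially parallel in spirit to the pastings in the proofs of \autoref{prop:cone} and \autoref{prop:ess-im-target}, and its cleanest execution proceeds by exhibiting both morphisms as canonical mates of the same (compatible) pasting of homotopy exact squares.
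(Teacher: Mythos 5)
Your proposal is correct and follows essentially the same route as the paper: the paper's proof is precisely the single pasting identity you describe in your final paragraph, realizing the comparison map~\eqref{eq:cocone-comp-map} as the total mate of a pasting in which every square except the one governing \eqref{eq:cone-II} is homotopy exact, and then invoking \autoref{prop:cone} via the functoriality of mates with pasting. Your two preliminary fiber identifications are exactly the commutative square the paper records immediately after its proof, so the only difference is presentational.
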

\begin{proof}
Since the functors $i_A\colon A\to A^\rhd$ and $t_A\colon A^\rhd\to (A^\rhd)^\rhd$ are fully faithful, it suffices to note that the pastings
\[
\xymatrix{
A\ar[r]^-\id\ar[d]_-{\pi_A}&A\ar[d]_-{i_A}\ar[r]^-{i_A}&A^\rhd\ar[d]_-\id\ar[r]^-\id&A^\rhd\ar[d]^-{t_A}\ar@{}[rd]|{=}&A\ar[r]^-{i_A}\ar[d]_-{\pi_A}&A^\rhd\ar[d]^-{t_A}\\
\bbone\ar[r]_-\infty&A^\rhd\ar[r]_-\id&A^\rhd\ar[r]_-{t_A}&(A^\rhd)^\rhd&\bbone\ar@{}[]!R(0.5);[r]!L(0.5)|{\Downarrow}\ar@/^1.5ex/[]!R(0.5);[r]!L(0.5)^-\infty\ar@/_1.5ex/[]!R(0.5);[r]!L(0.5)_-{\infty+1}&(A^\rhd)^\rhd
}
\]
agree, to observe that with the exception of the second square from the left all squares in the above diagrams are homotopy exact and to apply the functoriality of mates with pasting.
\end{proof}

The proof of this proposition shows that for $X\in\D(A^\rhd)$ the canonical mate \eqref{eq:cone-II} and the comparison map \eqref{eq:cocone-comp-map} sit in a commutative square
\[
\xymatrix{
\colim_A i_A^\ast(X)\ar[r]\ar[d]_-\cong& X_\infty\ar[d]^-\cong\\
(t_A)_!(X)_\infty\ar[r]&(t_A)_!(X)_{\infty+1}.
}
\]

\section{Preservation of colimits}
\label{sec:ctns-mor}

Recall that a morphism of derivators $F\colon\D\to\E$ is a pseudo-natural transformation, thereby coming with structure isomorphisms. We can use these structure isomorphisms $\gamma_u^{-1}\colon F_A u^\ast\to u^\ast F_B$ and $\gamma_u\colon u^\ast F_B\to F_A u^\ast$,
\[
\xymatrix{
\D(A)\ar[r]^-{F_A}&\E(A)&&
\D(A)\ar[r]^-{F_A}&\E(A)\dltwocell\omit{\cong}\\
\D(B)\ar[r]_-{F_B}\ar[u]^-{u^\ast}\urtwocell\omit{\cong}&\E(B),\ar[u]_-{u^\ast}&&
\D(B)\ar[r]_-{F_B}\ar[u]^-{u^\ast}&\E(B),\ar[u]_-{u^\ast}
}
\]
in order to talk about morphisms of derivators which preserve Kan extensions. In fact, associated to these natural transformations there are the canonical mates
\begin{gather}
u_! F_A\stackrel{\eta}{\to} u_! F_A u^\ast u_! \stackrel{\gamma_u^{-1}}{\to} u_! u^\ast F_B u_!\stackrel{\varepsilon}{\to} F_B u_!, \label{eq:mate-mor-cocont}\\
F_B u_\ast \stackrel{\eta}{\to} u_\ast u^\ast F_B u_\ast \stackrel{\gamma_u}{\to} u_\ast F_A u^\ast u_\ast \stackrel{\varepsilon}{\to} u_\ast F_A.\label{eq:mate-mor-cont}
\end{gather}

\begin{defn}\label{defn:cocontinuous}
Let $F\colon\D\to\E$ be a morphism of derivators and let $u\colon A\to B$ be in $\cCat$.
\begin{enumerate}
\item The morphism $F$ \textbf{preserves left Kan extensions along~$u$} if the canonical mate \eqref{eq:mate-mor-cocont} is an isomorphism. The morphism is \textbf{cocontinuous} if it preserves left Kan extensions along all $u$. The morphism \textbf{preserves colimits of shape $A$} if it preserves left Kan extensions along $\pi_A\colon A\to\bbone$.
\item The morphism $F$ \textbf{preserves right Kan extensions along~$u$} if the canonical mate \eqref{eq:mate-mor-cont} is an isomorphism. The morphism is \textbf{continuous} if it preserves right Kan extensions along all $u$. The morphism \textbf{preserves limits of shape $A$} if it preserves right Kan extensions along $\pi_A\colon A\to\bbone$.
\end{enumerate}
\end{defn}

By duality, we allow ourselves to state and prove results only for morphisms which preserve certain colimits or left Kan extensions. The dual statements follow from the following \textbf{duality principle}.

\begin{lem}\label{lem:dual-prin-mor}
The following are equivalent for a morphism of derivators $F\colon\D\to\E$ and a functor $u\colon A\to B$ in $\cCat$. 
\begin{enumerate}
\item $F\colon\D\to\E$ preserves left Kan extensions along $u\colon A\to B$.
\item $F\op\colon\D\op\to\E\op$ preserves right Kan extensions along $u\op\colon A\op\to B\op$.
\end{enumerate}
\end{lem}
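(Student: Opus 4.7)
The plan is to show that the canonical mate computing preservation of right Kan extensions by $F\op$ along $u\op$ is literally the opposite $2$-cell of the canonical mate computing preservation of left Kan extensions by $F$ along $u$. Since a natural transformation is invertible if and only if its opposite is, this will yield the equivalence immediately.

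First I would unwind the definitions. Recall $\D\op(C) = \D(C\op)\op$, $(F\op)_C = (F_{C\op})\op$, and for $v\colon C\to D$ the restriction functor $v^{\ast,\D\op}\colon \D\op(D) \to \D\op(C)$ is $(v\op)^{\ast,\D}$ regarded as a functor between opposite categories, i.e.\ $((v\op)^{\ast,\D})\op$. Because passing to opposite categories swaps left and right adjoints, the adjunction $u_!^\D \adj u^{\ast,\D}$ in $\D$ becomes, in $\D\op$, an adjunction whose \emph{right} adjoint to $(u\op)^{\ast,\D\op}$ is $(u_!^\D)\op$. In other words, $(u\op)_\ast^{\D\op} = (u_!^\D)\op$, with unit $\eta\op$ (coming from the counit $\varepsilon$ of $u_!^\D\adj u^{\ast,\D}$) and counit $\varepsilon\op$ (coming from the unit $\eta$).

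Second, I would check that the structure isomorphism $\gamma^{F\op}$ of $F\op$ is precisely the opposite of $\gamma^F$: the square defining $\gamma^{F\op}_{u\op}\colon (u\op)^\ast (F\op)_{B\op}\to (F\op)_{A\op} (u\op)^\ast$ becomes, upon taking opposites, the square for $\gamma^{F}_{u}\colon u^\ast F_B\to F_A u^\ast$. Now I would write out the defining formula~\eqref{eq:mate-mor-cont} for the mate $\mu'$ associated to $F\op$ and $u\op$,
\[
(F\op)_{B\op}(u\op)_\ast \xto{\eta} (u\op)_\ast(u\op)^\ast(F\op)_{B\op}(u\op)_\ast \xto{\gamma^{F\op}} (u\op)_\ast(F\op)_{A\op}(u\op)^\ast(u\op)_\ast \xto{\varepsilon} (u\op)_\ast(F\op)_{A\op},
\]
and observe that each of the three pieces $\eta$, $\gamma^{F\op}$, $\varepsilon$ is (by the previous paragraphs) the opposite of the corresponding piece $\varepsilon$, $\gamma^F$, $\eta$ appearing in the formula~\eqref{eq:mate-mor-cocont} for $\mu\colon u_!^\E F_A \to F_B u_!^\D$. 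Since vertical composition of natural transformations reverses under passage to opposites while horizontal composition (whiskering by functors) is preserved, the composite $\mu'$ is exactly $\mu\op$ as natural transformations $\D(A)\op \to \E(B)\op$.

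Finally, a natural transformation $\alpha$ in $\cCAT$ is a natural isomorphism iff each component is invertible, which is preserved by passing to opposite morphisms; hence $\mu$ is invertible iff $\mu\op = \mu'$ is. The only obstacle is bookkeeping: one has to keep careful track of the direction of the various $2$-cells, of where opposites are taken, and of the convention that $(GF)\op = G\op F\op$ for functors while natural transformations compose in reversed order under opposites. None of this is substantive mathematics, so the proof should compress to a one-line reference to this unwinding.
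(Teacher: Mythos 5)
Your proposal is correct and is exactly the argument the paper intends: it identifies the mate \eqref{eq:mate-mor-cont} for $F\op$ and $u\op$ as the opposite of the mate \eqref{eq:mate-mor-cocont} for $F$ and $u$, and concludes since passage to opposite natural transformations preserves and reflects invertibility. The paper leaves this unwinding to the reader, and your write-up supplies precisely the bookkeeping it omits.
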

\begin{proof}
We leave it to the reader to unravel definitions in order to verify that the canonical mate \eqref{eq:mate-mor-cont} for $F\op$ and $(u\op)_\ast$ can be chosen to be the opposite of the canonical mate \eqref{eq:mate-mor-cocont} for $F$ and $u_!$. Since the passage to opposite natural transformations preserves and reflects natural isomorphisms this concludes the proof.
\end{proof}

\begin{lem}[{\cite[Prop.~2.3]{groth:ptstab}}]\label{lem:cocont-colimits}
A morphism of derivators is cocontinuous if and only if it preserves colimits.
\end{lem}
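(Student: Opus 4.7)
The forward direction is immediate from the definitions: if $F$ preserves left Kan extensions along every functor in $\cCat$, then in particular it preserves left Kan extensions along $\pi_A\colon A\to\bbone$ for every $A$, hence preserves colimits of every shape. So the content is in the converse.

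Assume $F\colon\D\to\E$ preserves colimits of all shapes and let $u\colon A\to B$ be a functor in $\cCat$. To show the canonical mate $u_!F_A\to F_B u_!$ of \eqref{eq:mate-mor-cocont} is an isomorphism, by (Der2) it suffices to check that its restriction along $b\colon\bbone\to B$ is an isomorphism for every object $b\in B$. The plan is to identify, via the pointwise formula for left Kan extensions, this restricted mate with a colimit-preservation mate for $F$ at the shape $(u/b)$, which is an isomorphism by hypothesis.

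Concretely, I will exploit the slice square
\[
\xymatrix{
(u/b)\ar[r]^-{p}\ar[d]_-{\pi}\drtwocell\omit{} & A\ar[d]^-{u}\\
\bbone\ar[r]_-{b} & B,
}
\]
which is homotopy exact by (Der4). Pasting this square below the naturality square of $F$ at $u$ (with structure iso $\gamma_u$), and using the functoriality of mates with pasting, exhibits the restriction $b^\ast(u_!F_A\to F_Bu_!)$ as the pasted mate built from (i) the pseudo-naturality square of $F$ at $p$ (whose mate is the structure isomorphism $\gamma_p^{-1}$, hence invertible), and (ii) the naturality square of $F$ at $\pi\colon(u/b)\to\bbone$ together with the slice square, whose mate is the colimit-preservation map
\[
\pi_!p^\ast F_A\to\pi_!F_{(u/b)}p^\ast\to F_\bbone\pi_!p^\ast.
\]
By the hypothesis that $F$ preserves colimits of shape $(u/b)$, this last mate is an isomorphism. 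Combining these, the restricted mate $b^\ast(u_!F_A\to F_Bu_!)$ is an isomorphism, and (Der2) concludes.

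The only technical point is bookkeeping the pasting: one must carefully match the mate \eqref{eq:mate-mor-cocont} for $F$ and $u_!$, after restriction along $b$, with the mate arising from the pasted 2-cell obtained by stacking the slice square with the pseudo-naturality squares of $F$. This is an instance of the compatibility of mates with horizontal and vertical pasting, together with the identification (via $\gamma$) of the mate of a pseudo-naturality square with a structure isomorphism. I expect this compatibility bookkeeping—rather than any genuinely new argument—to be the main obstacle; once it is recorded, the conclusion is a formal consequence of the homotopy exactness of slice squares and of the preservation hypothesis.
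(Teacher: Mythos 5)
Your proposal is correct and follows essentially the same route as the paper's own argument: reduce to checking the mate at each $b\in B$ via (Der2), paste with the homotopy exact slice square over $(u/b)$, and use the functoriality of mates with pasting together with the pseudo-naturality isomorphisms $\gamma_p$ and $\gamma_b$ to identify the restricted mate with the colimit-preservation mate for the shape $(u/b)$. The bookkeeping you flag is exactly what the paper records in its commutative diagram comparing the two pastings, so nothing further is needed.
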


\begin{rmk}\label{rmk:cocont-colimits}
The proof of \cite[Prop.~2.3]{groth:ptstab} establishes the following more precise statement which will be useful later. Let $F\colon\D\to\E$ be a morphism of derivators and let $u\colon A\to B$ be a functor between small categories. If $F$ preserves colimits of shape $(u/b),b\in B,$ then $F$ preserves left Kan extensions along~$u$.
\end{rmk}

We defined a morphism of derivators to preserve colimits of shape~$A$ if it preserves left Kan extension along $\pi_A\colon A\to\bbone$. The next goal is to relate this to the preservation of colimiting cocones. To this end, we collect the following more general results concerning left Kan extensions along fully faithful functors which are also of independent interest.

\begin{lem}\label{lem:ff-kan-ess-im}
Let $F\colon\D\to\E$ be a morphism of derivators and let $u\colon A\to B$ be fully faithful. The morphism $F$ preserves left Kan extensions along $u$ if and only if $Fu_!(X),X\in\D(A),$ lies in the essential image of $u_!\colon\E(A)\to\E(B)$.
\end{lem}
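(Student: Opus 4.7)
My plan is to unravel the canonical mate \eqref{eq:mate-mor-cocont} into three factors and observe that, under the fully faithful hypothesis on $u$, two of them are automatically invertible, so that invertibility of the whole mate reduces to invertibility of the counit of $(u_!,u^\ast)$ at $F_B u_!(X)$. This counit is exactly what detects the essential image of $u_!$ when $u$ is fully faithful.

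More precisely, since $u\colon A\to B$ is fully faithful, the left Kan extension $u_!\colon\D(A)\to\D(B)$ is fully faithful (and similarly for $\E$), so by \cite[Lem.~1.21]{groth:ptstab} the unit $\eta\colon\id\to u^\ast u_!$ is an isomorphism, and $Y\in\E(B)$ lies in the essential image of $u_!$ if and only if the counit $\varepsilon\colon u_! u^\ast Y\to Y$ is an isomorphism. Now by definition the canonical mate~\eqref{eq:mate-mor-cocont} for $F$ and $u$ factors as
\[
u_! F_A \xto{u_! F_A \eta} u_! F_A u^\ast u_! \xto{u_! \gamma_u^{-1} u_!} u_! u^\ast F_B u_! \xto{\varepsilon F_B u_!} F_B u_!.
\]
The first factor is an isomorphism because $\eta$ is (fully faithful hypothesis), and the middle factor is an isomorphism because $\gamma_u^{-1}$ is a structure isomorphism of the pseudo-natural transformation $F$. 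Hence the mate, evaluated at $X\in\D(A)$, is an isomorphism if and only if $\varepsilon\colon u_! u^\ast F_B u_!(X) \to F_B u_!(X)$ is, which by the above characterization of the essential image happens if and only if $F_B u_!(X)$ lies in the essential image of $u_!\colon\E(A)\to\E(B)$.

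Since this holds for every $X\in\D(A)$ simultaneously, the two conditions in the statement are equivalent. There is no real obstacle here: the proof is essentially a diagrammatic bookkeeping of the mate, and the only input beyond the definitions is the standard fact that for fully faithful $u$ the unit of $(u_!,u^\ast)$ is invertible and the essential image of $u_!$ is detected by the counit, both of which are quoted from \cite[Lem.~1.21]{groth:ptstab}.
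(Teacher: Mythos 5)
Your proof is correct and follows essentially the same route as the paper: both decompose the canonical mate \eqref{eq:mate-mor-cocont} into the unit, the pseudo-naturality constraint, and the counit, observe that the first two are invertible because $u$ is fully faithful, and reduce to the detection of the essential image of $u_!$ by the counit via \cite[Lem.~1.21]{groth:ptstab}. The only (cosmetic) difference is that you obtain both implications at once from this factorization, whereas the paper dispatches the forward direction separately before unravelling the mate for the converse.
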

\begin{proof}
If $F$ preserves left Kan extensions along $u$, then the mate $u_!F(X)\to Fu_!(X)$ is an isomorphism, so $Fu_!(X)$ lies in the essential image of $u_!\colon\E(A)\to\E(B)$. Let us conversely assume that $Fu_!(X)$ lies in the essential image of $u_!$. Since $u_!$ is fully faithful, this is the case if and only if the counit $\varepsilon\colon u_!u^\ast\to\id$ is an isomorphism on $Fu_!(X)$. But this implies that the canonical mate \eqref{eq:mate-mor-cocont} given by
\[
u_!F(X)\stackrel{\eta}{\to}u_!Fu^\ast u_!(X)\toiso u_!u^\ast Fu_!(X)\stackrel{\varepsilon}{\to} Fu_!(X)
\]
is an isomorphism. In fact, the second morphism is the pseudo-naturality isomorphism $\gamma_{F,u}^{-1}$ belonging to~$F$, $\eta$ is an isomorphism since $u_!$ is fully faithful, and $\varepsilon$ was just observed to be an isomorphism.
\end{proof}

\begin{lem}\label{lem:ff-kan-morphism}
Let $F\colon \D\to\E$ be a morphism of derivators, let $u\colon A\to B$ be fully faithful, and let $X\in\D(A)$. The canonical mate $\beta\colon u_!F(X)\to Fu_!(X)$ \eqref{eq:mate-mor-cocont} is an isomorphism in $\E(B)$ if and only if $\beta_b\colon (u_!F)(X)_b\to (Fu_!)(X)_b$ is an isomorphism for all $b\in B-u(A)$.
\end{lem}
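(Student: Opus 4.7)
The plan is to reduce the statement to axiom (Der2), which asserts that $\beta$ is an isomorphism in $\E(B)$ if and only if every pointwise component $\beta_b$ is invertible. Partitioning $B$ as $u(A)\cup (B-u(A))$, it therefore suffices to show that $\beta_{u(a)}$ is \emph{automatically} an isomorphism for every $a\in A$, without any further hypothesis on $F$ beyond being a morphism of derivators.

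The key input is that full faithfulness of $u$ forces $u_!$ to be fully faithful in both $\D$ and $\E$ (by \cite[Lem.~1.21]{groth:ptstab}), which is equivalent to the unit $\eta\colon\id\to u^\ast u_!$ being a natural isomorphism on $\D(A)$ and on $\E(A)$. Unfolding the definition of the canonical mate, I would write $\beta$ as the composite
\[
u_! F \xrightarrow{\;u_! F\eta\;} u_! F u^\ast u_! \xrightarrow{\;u_! \gamma_u^{-1} u_!\;} u_! u^\ast F u_! \xrightarrow{\;\varepsilon F u_!\;} F u_!
\]
and then apply $u(a)^\ast=a^\ast u^\ast$. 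The first factor becomes invertible because $\eta$ is an iso on $\D(A)$ and morphisms of derivators preserve isomorphisms; the second is invertible since $\gamma_u$ is the pseudo-naturality constraint of $F$. The third factor $a^\ast u^\ast \varepsilon F u_!$ is the delicate one: although the counit $\varepsilon\colon u_! u^\ast\to\id$ need not be invertible, the whiskered transformation $u^\ast\varepsilon\colon u^\ast u_! u^\ast\to u^\ast$ is forced to be an iso by the triangle identity $u^\ast\varepsilon\cdot\eta_{u^\ast}=\id_{u^\ast}$ combined with the fact that $\eta$ on $\E(A)$ (hence $\eta_{u^\ast}$) is an isomorphism.

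Composing the three isomorphisms shows that $\beta_{u(a)}$ is invertible for every $a\in A$, and (Der2) gives the conclusion. The main step to get right is the third factor above: conceptually, both $(u_! F X)_{u(a)}$ and $(F u_! X)_{u(a)}$ are canonically identified with $F(X)_a$ via the unit isomorphisms provided by full faithfulness of $u$, and under these identifications $\beta_{u(a)}$ corresponds to an identity; the triangle-identity manipulation is the cleanest way of making this precise without having to invoke an auxiliary pasting with slice squares.
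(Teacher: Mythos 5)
Your argument is correct and is essentially the paper's own proof: reduce via (Der2) to showing that $u^\ast\beta$ is always invertible, split it into the three factors of the mate, and handle the first via invertibility of the unit $\eta\colon\id\to u^\ast u_!$ in $\D$, the second via pseudo-naturality, and the third via the triangle identity $u^\ast\varepsilon\circ\eta u^\ast=\id$ together with invertibility of the unit in $\E$. No substantive differences.
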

\begin{proof}
Since isomorphisms in \E are detected pointwise by axiom (Der2), it is enough to show that $u^\ast\beta\colon u^\ast u_!F(X)\to u^\ast Fu_!(X)$ is always an isomorphism. This restricted canonical mate $u^\ast\beta$ is given by
\[
u^\ast u_!F(X)\stackrel{\eta}{\to}u^\ast u_!Fu^\ast u_!(X)\toiso u^\ast u_!u^\ast Fu_!(X)\stackrel{u^\ast\varepsilon}{\to} u^\ast Fu_!(X).
\]
Since $u$ is fully faithful, the unit $\eta\colon\id\to u^\ast u_!$ is an isomorphism. Finally, the triangular identity 
\[
\id=u^\ast\varepsilon\circ \eta u^\ast\colon u^\ast\to u^\ast u_! u^\ast\to u^\ast
\]
shows that also $u^\ast\varepsilon$ is an isomorphism, and $u^\ast\beta$ is hence always a natural isomorphism.
\end{proof}

The point here is that it suffices to control the objects in $B-u(A)$.

\begin{prop}\label{prop:mor-colim-vs-cocone}
Let $F\colon\D\to\E$ be a morphism of derivators and let $A\in\cCat.$ The following are equivalent.
\begin{enumerate}
\item The morphism $F$ preserves colimits of shape $A$.
\item The morphism $F$ preserves left Kan extensions along $i_A\colon A\to A^\rhd$.
\item The functor $F\colon\D(A^\rhd)\to\E(A^\rhd)$ preserves colimiting cocones.
\end{enumerate}
\end{prop}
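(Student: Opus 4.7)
The plan is to split the cycle into the two equivalences $(\mathrm{ii})\Leftrightarrow(\mathrm{iii})$ and $(\mathrm{i})\Leftrightarrow(\mathrm{ii})$, each of which follows essentially from a single lemma already established above.

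For $(\mathrm{ii})\Leftrightarrow(\mathrm{iii})$ I would invoke \autoref{lem:ff-kan-ess-im} applied to the fully faithful functor $i_A\colon A\to A^\rhd$. That lemma says $F$ preserves left Kan extensions along $i_A$ if and only if $F(i_A)_!(X)$ lies in the essential image of $(i_A)_!\colon\E(A)\to\E(A^\rhd)$ for every $X\in\D(A)$. By \autoref{defn:colim-cocone} this essential image is precisely the collection of colimiting cocones in $\E(A^\rhd)$, and by the same definition every colimiting cocone $Y\in\D(A^\rhd)$ is isomorphic to $(i_A)_!(i_A^\ast Y)$. Thus the condition is equivalent to $F$ sending every colimiting cocone to a colimiting cocone, which is (iii).

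For $(\mathrm{i})\Leftrightarrow(\mathrm{ii})$ I would combine \autoref{lem:ff-kan-morphism} with the homotopy exact square of \autoref{prop:htpy-ex-cocone}. Since $i_A$ is fully faithful and $A^\rhd\setminus i_A(A)=\{\infty\}$, by \autoref{lem:ff-kan-morphism} the mate $\beta_{i_A}\colon(i_A)_!F\to F(i_A)_!$ is an isomorphism if and only if it is one after restriction along $\infty\colon\bbone\to A^\rhd$. The homotopy exactness of the left square in \eqref{eq:htpy-exact-cocone} yields a natural isomorphism $\pi_{A,!}\toiso\infty^\ast(i_A)_!$, and pasting this square with $F$ and using the pseudo-naturality isomorphism $\gamma_\infty$ identifies $(\beta_{i_A})_\infty$ with the canonical mate $\beta_{\pi_A}\colon\pi_{A,!}F\to F\pi_{A,!}$. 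Concretely, by the functoriality of mates with pasting we obtain a commutative square
\[
\xymatrix{
\pi_{A,!}F\ar[r]^-{\beta_{\pi_A}}\ar[d]_-\cong & F\pi_{A,!}\ar[d]^-\cong\\
\infty^\ast(i_A)_!F\ar[r]_-{\infty^\ast\beta_{i_A}} & \infty^\ast F(i_A)_!,
}
\]
in which the vertical arrows are the isomorphisms just described. Hence $\beta_{i_A}$ is invertible at $\infty$ if and only if $\beta_{\pi_A}$ is invertible, which gives the equivalence of (i) and (ii).

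The main obstacle is just bookkeeping: making sure that the identification of $(\beta_{i_A})_\infty$ with $\beta_{\pi_A}$ is really the canonical one. This is a standard pasting-of-mates argument, but one has to be careful to paste the homotopy exact square in \eqref{eq:htpy-exact-cocone} with the pseudo-naturality square of $F$ at $\infty$ in the correct direction so that the two mates agree. No further input beyond the results already recorded in \S\ref{sec:colimiting-cocones} and \S\ref{sec:ctns-mor} is needed.
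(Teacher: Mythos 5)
Your proposal is correct and follows essentially the same route as the paper: the equivalence of (i) and (ii) is obtained by reducing to the value at $\infty$ via \autoref{lem:ff-kan-morphism} and then identifying $(\beta_{i_A})_\infty$ with $\beta_{\pi_A}$ by pasting the homotopy exact square of \autoref{prop:htpy-ex-cocone} against the pseudo-naturality isomorphism $\gamma_\infty$, and the equivalence of (ii) and (iii) is exactly the content of \autoref{lem:ff-kan-ess-im} applied to the fully faithful functor $i_A$. The only cosmetic difference is that you spell out the (ii)$\Leftrightarrow$(iii) step slightly more explicitly than the paper, which simply cites the lemma.
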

\begin{proof}
We begin by showing that the first two statements are equivalent. The morphism $F$ preserves left Kan extensions along the fully faithful functor $i_A$ if and only if the canonical mate $(i_A)_!F\to F(i_A)_!$ is an isomorphism at $\infty\in A^\rhd$ (\autoref{lem:ff-kan-morphism}). To express this differently, let us consider the pasting on the left in
\begin{equation}
  \vcenter{\xymatrix@-0.4pc{
      \D(A) \ar[r]^-{F_A}&
      \E(A)\ar[r]^-{\id} &
      \E(A)\\
      \D(A^\rhd)\ar[r]_-{F_{A^\rhd}}\ar[u]^-{i_A^\ast} \urtwocell\omit{}&
      \E(A^\rhd)\ar[r]_-{\infty^\ast}\ar[u]^-{i_A^\ast} \urtwocell\omit{}&
      \E(\bbone),\ar[u]_-{\pi^\ast}
    }}
  \qquad
  \vcenter{\xymatrix@-0.4pc{
      \D(A) \ar[r]^-{\id}&
      \D(A)\ar[r]^-{F_A} &
      \E(A)\\
      \D(A^\rhd)\ar[r]_-{\infty^\ast}\ar[u]^-{i_A^\ast} \urtwocell\omit{}&
      \D(\bbone)\ar[r]_-{F_\bbone}\ar[u]^-{\pi^\ast} \urtwocell\omit{}&
      \E(\bbone),\ar[u]_-{\pi^\ast}
    }}
\end{equation}
The homotopy exactness of the square~\eqref{eq:htpy-exact-cocone} and the functoriality of mates with pasting, imply that $F$ preserves left Kan extensions along $i_A$ if and only if the canonical mate of the pasting on the left is an isomorphism. Since the above two pastings agree up to a vertical pasting by the pseudo-naturality isomorphism $\gamma_\infty$, we can again invoke \autoref{prop:htpy-ex-cocone} and the functoriality of mates with pasting to conclude that $F$ preserves colimits of shape~$A$ if and only if it preserves left Kan extensions along $i_A$. Finally, \autoref{lem:ff-kan-ess-im} establishes the equivalence of the second and the third statement.
\end{proof}

By the very definition, a morphism of derivators $F\colon\D\to\E$ preserves $A$-shaped colimits if a certain canonical mate is an isomorphism. This proposition allows us instead to simply verify that colimiting cocones are preserved. 

The following compatibility of the mates \eqref{eq:mate-mor-cocont} with adjunction (co)units will be useful later. Of course there is a dual statement concerning the mates \eqref{eq:mate-mor-cont}.

\begin{lem}\label{lem:units-vs-morphisms}
For a morphism of derivators $F\colon\D\to\E$ and $u\colon A\to B$ in $\cCat$ the following diagrams commute for every $X\in\D(A), Y\in\D(B)$,
\[
\xymatrix{
F(X)\ar[r]^-{F\eta}\ar[d]_-{\eta F}&Fu^\ast u_!(X)\ar[d]^-{\gamma^{-1}}&&
u_!Fu^\ast(Y)\ar[r]\ar[d]_-{\gamma^{-1}}&Fu_!u^\ast(Y)\ar[d]^-{F\varepsilon}\\
u^\ast u_! F(X)\ar[r]&u^\ast Fu_!(X),&&
u_!u^\ast F(Y)\ar[r]_-{\varepsilon F}&F(Y).
}
\]
\end{lem}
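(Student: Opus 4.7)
The plan is to verify each square by unwinding the definition of the canonical mate $\beta\colon u_!F\to Fu_!$ from \eqref{eq:mate-mor-cocont} and then repeatedly applying naturality together with the triangular identities for the adjunction $u_!\dashv u^\ast$. Recall that, by construction, $\beta$ is the three-step composite
\[
u_!F\xto{u_!F\eta} u_!Fu^\ast u_!\xto{u_!\gamma^{-1}u_!} u_!u^\ast Fu_!\xto{\varepsilon Fu_!}Fu_!,
\]
so both unlabeled horizontal maps in the diagrams can be written out explicitly.

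For the first diagram I would start from the bottom-left composite $u^\ast\beta(X)\circ\eta F(X)$ at $X\in\D(A)$, which after expansion has four factors. Using the naturality of $\eta\colon\id\to u^\ast u_!$ at the morphism $F\eta(X)\colon F(X)\to Fu^\ast u_!(X)$ one can move the outer unit $\eta F(X)$ past $u^\ast u_! F\eta(X)$. A further application of naturality of $\eta$ at $\gamma^{-1}(X)\colon Fu^\ast u_!(X)\to u^\ast Fu_!(X)$ then converts the factor $\gamma^{-1}$ into its boundary form $\gamma^{-1}(X)\circ F\eta(X)$, while leaving a composite $u^\ast\varepsilon(Fu_!(X))\circ\eta(u^\ast Fu_!(X))$ acting on $u^\ast Fu_!(X)$. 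By the triangular identity $u^\ast\varepsilon\circ\eta u^\ast=\id_{u^\ast}$ this composite collapses to the identity, and the result is precisely the top-right composite $\gamma^{-1}(X)\circ F\eta(X)$.

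For the second diagram I would dually start from the top-right composite $F\varepsilon(Y)\circ\beta(u^\ast Y)$ at $Y\in\D(B)$. Naturality of $\varepsilon\colon u_!u^\ast\to\id$ at $F\varepsilon(Y)$ lets one pull the outermost counit past the preceding factor, so that $F\varepsilon(Y)\circ\varepsilon(Fu_!u^\ast(Y))=\varepsilon(F(Y))\circ u_!u^\ast F\varepsilon(Y)$. Then naturality of $\gamma^{-1}\colon Fu^\ast\to u^\ast F$ applied to $\varepsilon(Y)\colon u_!u^\ast(Y)\to Y$ rewrites the remaining middle block as $u_!\gamma^{-1}(Y)\circ u_!Fu^\ast\varepsilon(Y)$, so that the whole composite becomes
\[
\varepsilon(F(Y))\circ u_!\gamma^{-1}(Y)\circ u_!F\bigl(u^\ast\varepsilon(Y)\circ\eta(u^\ast Y)\bigr).
\]
The parenthesized morphism is the identity on $u^\ast(Y)$ by the triangular identity, so this collapses to $\varepsilon F(Y)\circ u_!\gamma^{-1}(Y)$, as desired.

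The work is entirely routine 2-categorical bookkeeping; the only mildly annoying step is keeping straight \emph{which} instance of $\eta$ or $\varepsilon$ lives in $\D$ versus $\E$ and \emph{at which object} each natural transformation is being evaluated, since the relevant naturality squares always involve one of the two structure isomorphisms $\gamma^{\pm 1}$ of $F$ as the middle morphism. Both diagrams can also be seen more conceptually as the two descriptions of the mate $F\to u^\ast Fu_!$ (respectively $u_!Fu^\ast\to F$) of $\beta$ under the adjunction $u_!\dashv u^\ast$, but the direct naturality calculation above is the most transparent route and what I would include.
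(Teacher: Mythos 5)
Your argument is correct and is essentially the paper's own proof: both expand the canonical mate \eqref{eq:mate-mor-cocont}, apply naturality of the unit (resp.\ counit) twice to slide it past $F\eta$ and $\gamma^{-1}$ (resp.\ past $F\varepsilon$ and $\gamma^{-1}$), and then collapse the leftover pair via a triangular identity. The paper merely records the same computation as a single pasted commutative diagram rather than as a chain of equalities evaluated at $X$ and $Y$.
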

\begin{proof}
Plugging in the definition of the canonical mate $u_! F\to F u_!$ \eqref{eq:mate-mor-cocont}, in the first case it suffices to consider the following diagram,
\[
\xymatrix{
F\ar[r]^-{F\eta}\ar[d]_-{\eta F}&Fu^\ast u_!\ar[r]^-{\gamma^{-1}}\ar[d]_-{\eta F}&u^\ast F u_!\ar[d]_-{\eta u^\ast}\ar[rd]^-\id&\\
u^\ast u_! F\ar[r]_-{F\eta}&u^\ast u_! F u^\ast u_!\ar[r]_-{\gamma^{-1}}&u^\ast u_! u^\ast F u_!\ar[r]_-{u^\ast\varepsilon}&u^\ast F u_!.
}
\]
The two squares commute as naturality squares and the triangle commutes by a triangular identity. For the second claim, it is enough to consider the diagram
\[
\xymatrix{
u_! Fu^\ast\ar[r]^-{\eta u^\ast}\ar[dr]_-\id&u_!Fu^\ast u_! u^\ast\ar[d]^-{u^\ast\varepsilon}\ar[r]^-{\gamma^{-1}}&u_! u^\ast F u_! u^\ast\ar[d]_-{F\varepsilon}\ar[r]^-{\varepsilon F}&F u_! u^\ast\ar[d]^-{F\varepsilon}\\
&u_!Fu^\ast\ar[r]_-{\gamma^{-1}}&u_! u^\ast F\ar[r]_-{\varepsilon F}&F,
}
\]
which commutes for similar reasons.
\end{proof}

We conclude this section by the following compatibility of the mates \eqref{eq:mate-mor-cocont} and \eqref{eq:mate-mor-cont} with natural transformations.

\begin{lem}\label{lem:mates-cont-trafo}
Let $F,G\colon\D\to\E$ be morphisms of derivators, let $\alpha\colon F\to G$ be a natural transformation, and let $u\colon A\to B$. The following diagrams commute 
\[
\xymatrix{
u_! F_A\ar[r]\ar[d]_-{\alpha_A}&F_B u_!\ar[d]^-{\alpha_B}&&F_B u_\ast\ar[r]\ar[d]_-{\alpha_B}& u_\ast F_A\ar[d]^-{\alpha_A}\\
u_! G_A\ar[r]& G_B u_!,&&G_B u_\ast\ar[r]& u_\ast G_A.
}
\]
\end{lem}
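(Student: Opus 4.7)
The plan is to unfold the definition of the mates and exhibit each of the two rectangles as a vertical pasting of three squares that each commute for elementary reasons. Concentrating on the first diagram, the top row is by definition the composite
\[
u_!F_A\xrightarrow{\eta}u_!F_Au^\ast u_!\xrightarrow{\gamma_F^{-1}}u_!u^\ast F_Bu_!\xrightarrow{\varepsilon}F_Bu_!,
\]
and the bottom row is the analogous composite with $G_A,G_B$ in place of $F_A,F_B$. Whiskering $\alpha$ at the two intermediate objects produces three subrectangles whose outer boundary is the rectangle to be proved commutative.

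The left subrectangle involves only the unit $\eta\colon\id\to u^\ast u_!$; it commutes by naturality of $\eta$, applied to the transformation $u_!\alpha_A\colon u_!F_A\to u_!G_A$. Symmetrically, the right subrectangle involves only the counit $\varepsilon\colon u_!u^\ast\to\id$ and commutes by naturality of $\varepsilon$, applied to $\alpha_Bu_!\colon F_Bu_!\to G_Bu_!$. The middle subrectangle is the whiskering by $u_!$ on the left and by $u_!$ on the right of the square
\[
\xymatrix{
F_Au^\ast\ar[r]^-{\gamma_F^{-1}}\ar[d]_-{\alpha_A u^\ast}&u^\ast F_B\ar[d]^-{u^\ast\alpha_B}\\
G_Au^\ast\ar[r]_-{\gamma_G^{-1}}&u^\ast G_B,
}
\]
and this is precisely the modification axiom expressing that $\alpha\colon F\to G$ is a natural transformation of morphisms of derivators, i.e., that $\alpha$ is compatible with the pseudo-naturality structure isomorphisms of $F$ and $G$. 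Stacking these three squares yields the claim.

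The second diagram is formally dual: it follows either by invoking the duality principle (the opposite of the mate \eqref{eq:mate-mor-cocont} for $F\op$ and $u\op$ is the mate \eqref{eq:mate-mor-cont} for $F$ and $u$) or by rerunning the same argument with the adjunction $u^\ast\dashv u_\ast$, where the three ingredients are naturality of the unit $\eta\colon\id\to u_\ast u^\ast$, naturality of the counit $\varepsilon\colon u^\ast u_\ast\to\id$, and the modification axiom now applied with $\gamma_F$ (rather than $\gamma_F^{-1}$). The only point that requires any care is to recognize that the middle square in each case is exactly the modification axiom for $\alpha$, but this is built into the very definition of a natural transformation between morphisms of derivators, so there is no real obstacle.
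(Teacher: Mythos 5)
Your proposal is correct and follows essentially the same route as the paper: unravel the mate into its three constituent maps, observe that the outer two squares commute by naturality of the unit and counit and that the middle square is exactly the modification (coherence) axiom for $\alpha$, and dispose of the second diagram by duality. No gaps.
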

\begin{proof}
By duality it suffices to take care of the first statement, and unraveling definitions this amounts to showing that the diagram
\[
\xymatrix{
u_!F_A\ar[r]^-\eta\ar[d]_-{\alpha_A}&u_!F_Au^\ast u_!\ar[r]^{\gamma^{-1}}\ar[d]_-{\alpha_A}&u_!u^\ast F_B u_!\ar[d]^-{\alpha_B}\ar[r]^-\varepsilon&F_B u_!\ar[d]^-{\alpha_B}\\
u_!G_A\ar[r]_-\eta&u_!G_Au^\ast u_!\ar[r]_-{\gamma^{-1}}&u_!u^\ast G_B u_!\ar[r]_-\varepsilon&G_B u_!
}
\]
commutes. Here, the outer two squares commute as naturality squares, while the remaining one commutes by the coherence properties of a modification.
\end{proof}

\section{Commutativity of limits and colimits}

In this short section we consider colimit and limit morphisms of derivators and their compatibility with limits and colimits. This leads to the question if limits and colimits in unrelated variables commute.

\begin{lem}\label{lem:lkan-commute-naive}
Let \D be a derivator, let $u\colon A\to B, v\colon B\to C$ be in $\cCat$, and let $X\in\D(A)$. There are canonical isomorphisms
\[
(v\circ u)_!(X)\cong v_!(u_!(X))\qquad\text{and}\qquad (\id_A)_!(X)\cong X.
\]
\end{lem}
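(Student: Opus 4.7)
The plan is to reduce both isomorphisms to the uniqueness of left adjoints, exploiting the fact that a derivator $\D\colon\cCat\op\to\cCAT$ is a strict $2$-functor and that each restriction functor is part of an adjunction by axiom (Der3).

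First I would observe that by strict $2$-functoriality of $\D$ we have the equalities $(v\circ u)^\ast = u^\ast\circ v^\ast$ and $(\id_A)^\ast = \id_{\D(A)}$ on the nose. Axiom (Der3) then supplies left adjoints $(v\circ u)_!\dashv (v\circ u)^\ast$, $u_!\dashv u^\ast$, and $v_!\dashv v^\ast$. Composing the last two adjunctions produces $v_!\circ u_!$ as a left adjoint of $u^\ast\circ v^\ast = (v\circ u)^\ast$; analogously, $\id_{\D(A)}$ is trivially a left adjoint of itself, hence of $(\id_A)^\ast$. By uniqueness of left adjoints up to canonical natural isomorphism, we obtain natural isomorphisms
\[
v_!\circ u_!\iso (v\circ u)_!\qquad\text{and}\qquad \id_{\D(A)}\iso (\id_A)_!,
\]
and evaluating at $X\in\D(A)$ yields the claim.

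To make the comparison explicit and canonical, I would identify the isomorphism $v_!u_!\iso (v\circ u)_!$ with the canonical mate of the identity $2$-cell $(v\circ u)^\ast = u^\ast v^\ast$ under the composite adjunction: concretely, it is obtained by inserting the unit of $(v\circ u)_!\dashv (v\circ u)^\ast$ and the counits of $u_!\dashv u^\ast$ and $v_!\dashv v^\ast$, in the same spirit as the mate construction used throughout the paper. Similarly, $\id\iso (\id_A)_!$ is the mate of $\id\colon(\id_A)^\ast\to\id$.

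I do not expect any real obstacle; the only mild subtlety is that, although the proof is entirely formal adjunction nonsense, one should verify that these canonical comparisons are natural in $X$ (which is automatic from the construction as mates) and, if desired later, that they satisfy the expected cocycle condition for triple composites $w\circ v\circ u$. The latter follows by a standard mate-pasting argument, using the functoriality of mates with respect to pasting already invoked in the previous sections.
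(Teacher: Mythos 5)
Your proof is correct and follows exactly the paper's argument: the paper also deduces both isomorphisms from the uniqueness of left adjoints together with the strict equality $(v\circ u)^\ast=u^\ast v^\ast$ (and $(\id_A)^\ast=\id_{\D(A)}$). The extra remarks on identifying the comparison as a mate and on the cocycle condition are consistent with the paper's conventions but not needed for the statement.
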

\begin{proof}
This is immediate from the uniqueness of left adjoints and the relation
\[
(v\circ u)^\ast=u^\ast v^\ast\colon\D(C)\to\D(A).
\]
In the same way we obtain a canonical isomorphism $(\id_A)_!\cong\id_{\D(A)}.$
\end{proof}

There are the following immediate consequences.

\begin{cor}\label{cor-lkan-colim-comm}
Let \D be a derivator, let $u\colon A\to B$ in $\cCat$, and let $X\in\D(A)$. There are canonical isomorphisms
\[
\colim_A X\cong\colim_B u_!(X).
\]
\end{cor}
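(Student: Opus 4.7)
The plan is to observe that this corollary is essentially a direct specialization of Lemma \ref{lem:lkan-commute-naive} applied to the canonical factorization of the structure map $\pi_A\colon A\to\bbone$ through $u$. The key point is that any functor $u\colon A\to B$ sits in a commutative triangle in $\cCat$,
\[
\xymatrix{
A \ar[rr]^-u \ar[dr]_-{\pi_A} && B \ar[dl]^-{\pi_B} \\
& \bbone, &
}
\]
simply because $\bbone$ is terminal in $\cCat$. Hence $\pi_A = \pi_B \circ u$.

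I would then apply the previous lemma to the composable pair $(u,\pi_B)$ to obtain a canonical isomorphism
\[
\colim_A X = (\pi_A)_!(X) = (\pi_B \circ u)_!(X) \cong (\pi_B)_!\bigl(u_!(X)\bigr) = \colim_B u_!(X),
\]
which is exactly the claim. The \emph{canonicity} of the isomorphism is inherited from the canonicity of the isomorphism $(vu)_! \cong v_! u_!$ in Lemma \ref{lem:lkan-commute-naive}, which in turn comes from the uniqueness of left adjoints to the identity of restriction functors $u^\ast v^\ast = (vu)^\ast$.

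Since the argument is essentially a single substitution, there is no real obstacle; the only thing worth spelling out for the reader is the identification $\pi_B \circ u = \pi_A$, after which the result is immediate from Lemma \ref{lem:lkan-commute-naive}.
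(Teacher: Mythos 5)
Your proposal is correct and is exactly the paper's argument: the paper likewise applies Lemma~\ref{lem:lkan-commute-naive} to the factorization $\pi_A=\pi_B\circ u$, which you merely spell out in slightly more detail.
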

\begin{proof}
We simply apply \autoref{lem:lkan-commute-naive} to $\pi_A=\pi_B\circ u\colon A\to\bbone$.
\end{proof}

A variant is the following \textbf{Fubini theorem}, saying that left Kan extensions in unrelated variables commute.

\begin{cor}\label{cor:fubini}
Let \D be a derivator, let $u\colon A\to A'$ and $v\colon B\to B'$, and let $X\in\D(A\times B).$ There are canonical isomorphisms
\[
(u\times\id)_!(\id\times v)_!X\cong (u\times v)_! X\cong (\id\times v)_!(u\times \id)_! X.
\]
\end{cor}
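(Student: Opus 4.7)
The plan is to reduce the statement directly to \autoref{lem:lkan-commute-naive} by factoring $u\times v$ in two different ways.

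First I would observe that as functors $A\times B\to A'\times B'$ we have the two equal factorizations
\[
u\times v \;=\; (u\times\id_{B'})\circ(\id_A\times v) \;=\; (\id_{A'}\times v)\circ(u\times\id_B).
\]
This is a trivial verification on objects and morphisms, using only the strict associativity and unit laws for the product $\times$ on $\cCat$.

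Next I would apply \autoref{lem:lkan-commute-naive} to each factorization. That lemma gives a canonical isomorphism $(w\circ w')_!\cong w_!\,w'_!$ for any composable pair $w,w'$ in $\cCat$. Applied to the first factorization, this yields
\[
(u\times v)_!X \;\cong\; (u\times\id)_!(\id\times v)_!X,
\]
while the second factorization yields
\[
(u\times v)_!X \;\cong\; (\id\times v)_!(u\times\id)_!X.
\]
Stringing these two canonical isomorphisms together (through the common middle term $(u\times v)_!X$) gives the desired zig-zag of canonical isomorphisms.

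There is essentially no obstacle here: the content is entirely in the previous lemma, and what remains is the elementary observation that $u\times v$ can be written as two different composites of ``horizontal'' and ``vertical'' factors in $\cCat$. The only thing worth flagging is the word \emph{canonical}: both isomorphisms arise as the mate comparisons coming from the uniqueness of left adjoints applied to the equality $((u\times v)^\ast = (\id\times v)^\ast(u\times\id)^\ast = (u\times\id)^\ast(\id\times v)^\ast)$ of restriction functors, which is exactly the structure \autoref{lem:lkan-commute-naive} packages.
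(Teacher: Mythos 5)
Your proposal is correct and is essentially the paper's own argument: the paper likewise invokes the commuting naturality square $A\times B\to A'\times B'$ (which encodes exactly your two factorizations of $u\times v$) and cites \autoref{lem:lkan-commute-naive} to conclude. The paper additionally sketches an alternative route via cocontinuity of the left adjoint $u_!$, but your main line of reasoning matches its primary proof.
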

\begin{proof}
Considering the naturality square
\[
\xymatrix{
A\times B\ar[r]^-{u\times\id}\ar[d]_-{\id\times v}&A'\times B\ar[d]^-{\id\times v}\\
A\times B'\ar[r]_-{u\times\id}&A'\times B',
}
\]
this is immediate from \autoref{lem:lkan-commute-naive}. Alternatively, in order to obtain a canonical isomorphism between the two outer expressions it suffices to note that the Kan extension morphism $u_!\colon\D^A\to\D^{A'}$ is a left adjoint and hence cocontinuous.
\end{proof}

Using suggestive notation, the Fubini theorem specializes as follows.

\begin{cor}
Let \D be a derivator, let $A,B\in\cCat,$ and let $X\in\D(A\times B)$. There are canonical isomorphisms
\[
\colim_A\colim_B X\cong\colim_{A\times B} X\cong\colim_B\colim_A X.
\]
\end{cor}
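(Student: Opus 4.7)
The plan is to derive this as an immediate specialization of the Fubini theorem (\autoref{cor:fubini}) at $A' = B' = \bbone$, with $u = \pi_A$ and $v = \pi_B$. Under the canonical identification $\bbone\times\bbone\cong\bbone$, the product $\pi_A\times\pi_B\colon A\times B\to\bbone\times\bbone$ coincides with $\pi_{A\times B}$, so the middle term of Fubini becomes $(\pi_A\times\pi_B)_!X\cong\colim_{A\times B}X$.

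For the outer terms, I would identify $\D(A\times\bbone)\cong\D(A)$ and $\D(\bbone\times B)\cong\D(B)$ via restriction along the canonical projection isomorphisms. Under these identifications, $(\id_A\times\pi_B)_!X$ is the partial colimit $\colim_B X$ in the $B$-variable parametrized over $A$; a subsequent application of $(\pi_A\times\id)_!$ then becomes $(\pi_A)_!$ and yields $\colim_A\colim_B X$. Symmetrically, the other factorization in \autoref{cor:fubini} produces $\colim_B\colim_A X$, and substituting these identifications into the Fubini chain gives the claim. The only bookkeeping is the identification of $(\id_A\times\pi_B)_!$ after the projection isomorphism $A\times\bbone\cong A$, which follows from \autoref{lem:lkan-commute-naive} applied to the factorization $\pi_{A\times B} = \pi_A\circ(\id_A\times\pi_B)$ under that identification; no genuine obstacle arises beyond this.
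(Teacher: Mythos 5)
Your proposal is correct and is exactly the paper's argument: the paper's proof is the one-line statement that this is the special case of \autoref{cor:fubini} with $A'=B'=\bbone$, and you have simply spelled out the routine identifications ($\bbone\times\bbone\cong\bbone$, $A\times\bbone\cong A$, and $(\id_A\times\pi_B)_!$ as the parametrized colimit) that the paper leaves implicit. No difference in substance.
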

\begin{proof}
This is a special case of \autoref{cor:fubini}.
\end{proof}

Thus, colimits in unrelated variables commute in every derivator. A classical reference for such a result can already be found in \cite{vogt:commuting}. The results of this section of course dualize to yield similar statements for right Kan extensions. 

The mixed situation, i.e., the question whether limits and colimits in unrelated variables commute, is more subtle. Given two small categories $A$ and $B$, a derivator \D and $X\in\D(A\times B)$, we consider the canonical mate
\begin{align}\label{eq:colim-lim-comm}
(\pi_A)_!(\pi_B)_\ast X&\to (\pi_A)_!(\pi_B)_\ast (\pi_A)^\ast(\pi_A)_! X\\
&\toiso (\pi_A)_!(\pi_A)^\ast(\pi_B)_\ast (\pi_A)_! X\\
& \to (\pi_B)_\ast (\pi_A)_! X
\end{align}

\begin{defn}\label{defn:colim-lim-comm}
Let \D be a derivator and let $A,B\in\cCat$. We say that \textbf{colimits of shape $A$ and limits of shape $B$ commute} in \D if for every $X\in\D(A\times B)$ the canonical mate $\colim_A\mathrm{lim}_BX\to\mathrm{lim}_B\colim_A X$ \eqref{eq:colim-lim-comm} is an isomorphism.  
\end{defn}

In a similar way, given functors $u\colon A\to A'$ and $v\colon B\to B'$, we define that \textbf{left Kan extension along $u$ and right Kan extension along $v$ commute} by asking that the canonical mate
\begin{align}\label{eq:lkan-rkan-comm}
(u\times\id)_!(\id\times v)_\ast&\to (u\times\id)_!(\id\times v)_\ast (u\times\id)^\ast(u\times\id)_!\\
&\toiso (u\times\id)_!(u\times\id)^\ast(\id\times v)_\ast (u\times\id)_!\\
& \to (\id\times v)_\ast (u\times\id)_!
\end{align}
is an isomorphism. For less cumbersome terminology, we also say that $u_!$ and $v_\ast$ commute in \D.

To relate this to \S\ref{sec:ctns-mor} we make the following trivial observation.

\begin{lem}\label{lem:lkan-rkan-comm}
Let \D be a derivator and let $u\colon A\to A',v\colon B\to B'$ be in $\cCat$. The following are equivalent.
\begin{enumerate}
\item $u_!$ and $v_\ast$ commute in \D.
\item The morphism $v_\ast\colon\D^B\to\D^{B'}$ preserves left Kan extensions along $u$.
\item The morphism $u_!\colon\D^A\to\D^{A'}$ preserves right Kan extensions along $v$.
\end{enumerate}
\end{lem}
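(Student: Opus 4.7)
The plan is to unwind each of the three conditions in terms of the mate \eqref{eq:lkan-rkan-comm} and verify that they describe exactly the same datum, only repackaged through the shifted derivator construction. First I would recall that for any small category $C$ and derivator \D there is a shifted derivator $\D^C$ with $\D^C(K) = \D(C \times K)$, in which left Kan extension along $w\colon K \to L$ is computed as $(\id_C \times w)_!$ and right Kan extension as $(\id_C \times w)_\ast$. In particular, in $\D^B$ (resp. $\D^{B'}$) left Kan extension along $u\colon A \to A'$ is $(u \times \id_B)_!$ (resp. $(u \times \id_{B'})_!$), and dually in $\D^A$ (resp. $\D^{A'}$) right Kan extension along $v\colon B \to B'$ is $(\id_A \times v)_\ast$ (resp. $(\id_{A'} \times v)_\ast$).

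Second, I would observe that $v_\ast\colon \D^B \to \D^{B'}$, whose $K$-component is $(\id_K \times v)_\ast\colon \D(B \times K) \to \D(B' \times K)$, is a morphism of derivators: its pseudo-naturality isomorphisms with respect to restriction along maps $\id \times w$ in disjoint variables are obtained from the canonical ones coming from Fubini (cf. \autoref{cor:fubini} and its dual). Symmetrically for $u_!\colon \D^A \to \D^{A'}$.

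Third, with these identifications in place, I would write out the canonical mate \eqref{eq:mate-mor-cocont} for the morphism $v_\ast$ preserving left Kan extensions along $u$, namely
\[
u_! (v_\ast)_A \stackrel{\eta}{\to} u_! (v_\ast)_A u^\ast u_! \stackrel{\gamma^{-1}}{\to} u_! u^\ast (v_\ast)_{A'} u_! \stackrel{\varepsilon}{\to} (v_\ast)_{A'} u_!,
\]
and unpack both sides in terms of $(u \times \id)_!$ and $(\id \times v)_\ast$. Since the pseudo-naturality isomorphism $\gamma$ for $v_\ast$ with respect to $u^\ast$ is precisely the commutation of restriction in disjoint variables, the above mate is literally the one displayed in \eqref{eq:lkan-rkan-comm}. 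This shows (i) $\Leftrightarrow$ (ii). The same bookkeeping, applied to the mate \eqref{eq:mate-mor-cont} for $u_!$ preserving right Kan extensions along $v$, yields (i) $\Leftrightarrow$ (iii).

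The only mildly subtle step is checking that the pseudo-naturality $2$-cells $\gamma$ for the morphisms $u_!$ and $v_\ast$ of shifted derivators match the canonical mates used in defining commutativity; but this is exactly the compatibility of Kan extensions and restrictions in unrelated variables, and there is nothing computationally demanding beyond this verification. Hence the lemma reduces to a compatibility of definitions, justifying the adjective ``trivial'' in the statement.
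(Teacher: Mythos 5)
Your proof is correct and follows essentially the same route as the paper, which likewise disposes of the lemma by unraveling the definitions: the mate for $u_!$ preserving right Kan extensions along $v$ is literally \eqref{eq:lkan-rkan-comm}, while the case of $v_\ast$ requires conjugating with restriction along the symmetry constraint of $(\cCat,\times,\bbone)$ (the one point your write-up glosses over, since the components of $v_\ast\colon\D^B\to\D^{B'}$ live on $\D(B\times K)$ rather than $\D(K\times B)$). This is a bookkeeping issue only and does not affect the argument.
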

\begin{proof}
Unraveling definitions, we see that the canonical mate expressing that $u_!$ preserves right Kan extensions along $v$ is precisely the canonical mate \eqref{eq:lkan-rkan-comm}. In the case of the morphism $v_\ast$ it suffices to conjugate with restrictions along the symmetry constraints in $(\cCat,\times,\bbone)$.
\end{proof}

\begin{lem}\label{lem:comm-cone}
The following are equivalent for a derivator \D and $A,B\in\cCat$.
\begin{enumerate}
\item Colimits of shape $A$ and limits of shape $B$ commute in~\D.
\item Left Kan extensions along $i_A\colon A\to A^\rhd$ and right Kan extensions along $i_B\colon B\to B^\lhd$ commute in~\D.
\end{enumerate}
\end{lem}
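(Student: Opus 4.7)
My plan is to chain together a short sequence of equivalences, alternately invoking \autoref{lem:lkan-rkan-comm} and \autoref{prop:mor-colim-vs-cocone}. The underlying idea is that \autoref{lem:lkan-rkan-comm} translates any commutativity statement of the form ``$u_!$ and $v_\ast$ commute in $\D$'' into a preservation statement about a single morphism of the appropriate parametrized derivator, while \autoref{prop:mor-colim-vs-cocone} (and its dual) lets us interchange the shape $A$ with the fully faithful inclusion $i_A\colon A\to A^\rhd$ without changing the class of morphisms that preserve it. Alternating these two lemmas shuttles us from $(\pi_A,\pi_B)$ to $(i_A,i_B)$.

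Concretely, I would run through the following chain:
\begin{align*}
\text{(i)} &\Leftrightarrow (\pi_B)_\ast\colon\D^B\to\D \text{ preserves colimits of shape } A \quad [\autoref{lem:lkan-rkan-comm}]\\
&\Leftrightarrow (\pi_B)_\ast \text{ preserves left Kan extensions along } i_A \quad [\autoref{prop:mor-colim-vs-cocone}]\\
&\Leftrightarrow (i_A)_!\colon\D^A\to\D^{A^\rhd} \text{ preserves limits of shape } B \quad [\autoref{lem:lkan-rkan-comm}]\\
&\Leftrightarrow (i_A)_! \text{ preserves right Kan extensions along } i_B \quad [\text{dual of \autoref{prop:mor-colim-vs-cocone}}]\\
&\Leftrightarrow \text{(ii)} \quad [\autoref{lem:lkan-rkan-comm}].
\end{align*}
Each intermediate line is simply a preservation property of a single morphism of parametrized derivators, so every step is a direct invocation of one of the two cited results.

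I don't anticipate any genuine obstacle here beyond bookkeeping. The one subtlety worth verifying is that $(\pi_B)_\ast$ really defines a morphism of derivators $\D^B\to\D$ and similarly that $(i_A)_!$ defines a morphism $\D^A\to\D^{A^\rhd}$, since this is what is needed to apply \autoref{prop:mor-colim-vs-cocone} at the relevant steps. Both facts ultimately reduce to the commutativity of Kan extensions in unrelated variables (\autoref{cor:fubini}) and will be treated more systematically in \S\ref{sec:paras}.
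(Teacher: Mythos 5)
Your proof is correct and follows exactly the route the paper intends: the paper's own proof simply declares the statement ``immediate from \autoref{lem:lkan-rkan-comm} and \autoref{prop:mor-colim-vs-cocone},'' and your chain of equivalences is precisely the bookkeeping that makes this explicit. The only minor quibble is that $(\pi_B)_\ast$ and $(i_A)_!$ being morphisms of derivators is part of the standard calculus of parametrized Kan extensions (they are adjoints of restriction morphisms, as already used in the statement of \autoref{lem:lkan-rkan-comm}) rather than a consequence of \autoref{cor:fubini}, but this does not affect the argument.
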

\begin{proof}
This is immediate from \autoref{lem:lkan-rkan-comm} and \autoref{prop:mor-colim-vs-cocone}.
\end{proof}

In general, left and right Kan extensions in unrelated variables do not commute (as one observes by noting that this notion reduces to the usual one in represented derivators). As an additional illustration, in the sequel \cite{groth:char} we \emph{characterize} pointed and stable derivators, respectively, by the commutativity of certain left and right Kan extensions.

\section{Some closure and invariance properties}
\label{sec:ctns-closure}

In this section we collect some closure and invariance properties of morphisms of derivators preserving certain (co)limits or Kan extensions (see \cite{albert-kelly:closure} for a reference in the context of ordinary category theory). We again focus on left Kan extensions, and corresponding results for right Kan extensions follow by duality.

\begin{defn}\label{defn:eqiv-mor}
Two morphisms of derivators $F_1\colon\D_1\to\E_1$ and $F_2\colon\D_2\to\E_2$ are \textbf{equivalent}, if there are equivalences of derivators $\varphi\colon\D_1\simeq\D_2, \psi\colon\E_1\simeq\E_2$ and a natural isomorphism $\alpha\colon\psi\circ F_1\cong F_2\circ\varphi$,
\[
\xymatrix{
\D_1\ar[r]^-{F_1}\ar[d]_-\varphi^-\simeq\ar@{}[dr]|{\cong}&\E_1\ar[d]^-\psi_-\simeq\\
\D_2\ar[r]_-{F_2}&\E_2.
}
\]
The triple $(\varphi,\psi,\alpha)$ is an \textbf{equivalence} $F_1\to F_2$.
\end{defn}

\begin{prop}\label{prop:sorite-lim-I}
~ 
\begin{enumerate}
\item Equivalences and left adjoint morphisms of derivators are cocontinuous.
\item If $F$ and $G$ preserve left Kan extensions along $u$, then so does $G\circ F$.
\item If $F_1,F_2\colon\D\to\E$ are naturally isomorphic, then $F_1$ preserves left Kan extensions along $u$ if and only if $F_2$ does.
\item If $F_1\colon\D_1\to\E_1$ and $F_2\colon\D_2\to\E_2$ are equivalent, then $F_1$ preserves left Kan extensions along $u$ if and only if $F_2$ does.
\end{enumerate}
\end{prop}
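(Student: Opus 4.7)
My plan is to treat the four parts in order, with (iv) a formal consequence of (i)--(iii). The real work is in (i); the remaining parts are exercises in the mate calculus.

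For (i), the key observation is that an equivalence of derivators is in particular a left adjoint morphism (its quasi-inverse serves as the right adjoint), so it suffices to prove that any left adjoint morphism $F\colon\D\to\E$ with right adjoint $G$ is cocontinuous. I would argue by mate-conjugation in the $2$-category of prederivators: for a functor $u\colon A\to B$, the composite adjunctions $u_!F\adj Gu^\ast$ and $Fu_!\adj u^\ast G$ identify the canonical mate $u_!F\to Fu_!$ of~\eqref{eq:mate-mor-cocont} with (the adjunct of) the structure isomorphism $\gamma_u^G\colon u^\ast G\toiso Gu^\ast$ of the morphism $G$. Since $\gamma_u^G$ is invertible by the very definition of a morphism of derivators, so is the canonical mate, and $F$ preserves $u_!$.

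For (ii), I would express the canonical mate of $G\circ F$ along $u$ as a pasting. The pseudo-naturality structure $\gamma^{GF}_u$ is the vertical composite of $G\gamma^F_u$ and $\gamma^G_u$ whiskered by $F$, so the functoriality of mates under pasting yields a factorisation
\[
u_! GF \;\to\; G u_! F \;\to\; GF u_!
\]
in which the first arrow is the mate for $G$ whiskered by $F$ and the second is $G$ applied to the mate for $F$. Both are isomorphisms by hypothesis, hence so is their composite. For (iii), a natural isomorphism $\alpha\colon F_1\toiso F_2$ yields, via \autoref{lem:mates-cont-trafo}, a commutative square whose horizontal arrows are the two canonical mates and whose vertical arrows $\alpha_A,\alpha_B$ are invertible; one mate is then an isomorphism if and only if the other is.

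For (iv), let $(\varphi,\psi,\alpha)$ be an equivalence $F_1\to F_2$. The quasi-inverses $\varphi^{-1}$ and $\psi^{-1}$ are again equivalences of derivators, hence cocontinuous by (i). Combining (ii) and (iii) with the natural isomorphism $\psi F_1\cong F_2\varphi$ produces the chain of logical equivalences
\[
F_1 \text{ preserves } u_! \;\Longleftrightarrow\; \psi F_1 \text{ does} \;\Longleftrightarrow\; F_2\varphi \text{ does} \;\Longleftrightarrow\; F_2 \text{ does},
\]
where the outer equivalences apply (ii) with $\psi,\psi^{-1}$ respectively $\varphi,\varphi^{-1}$, and the middle one is (iii). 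The main obstacle is (i): while mate-conjugation under a pair of adjunctions is a classical $2$-categorical technique, some bookkeeping is required to transport it to the pseudo-natural setting and to verify that the adjunct of the canonical mate is indeed $(\gamma_u^G)^{-1}$; once this is done, the remaining parts are formal.
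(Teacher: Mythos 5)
Your proof is correct and follows the same route as the paper, which simply cites \cite[Prop.~2.9]{groth:ptstab} for (i) and \cite[Prop.~2.4]{groth:ptstab} for (ii)--(iii) and then deduces (iv); your mate-conjugation argument for left adjoints, the pasting factorisation $u_!GF\to Gu_!F\to GFu_!$, and the use of \autoref{lem:mates-cont-trafo} are exactly the arguments behind those citations. The only thing worth noting is that the paper derives (iv) from (i) and (iii) alone, whereas you (correctly) also invoke (ii) to handle the composites $\psi F_1$ and $F_2\varphi$ -- a step the paper leaves implicit.
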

\begin{proof}
The first statement is \cite[Prop.~2.9]{groth:ptstab} while the second and the third statements are \cite[Prop.~2.4]{groth:ptstab}. The fourth statement follows immediately from the first and the third one.
\end{proof} 

This proposition has a variant if we fix a morphism of derivators and let the functors in $\cCat$ vary. As a preparation we make the following construction.

\begin{con}\label{con:total-mate-kan}
Let \D be a derivator, let $u,v\colon A\to B$ be functors, and let $\alpha\colon u\to v$ be a natural transformation. The restriction functors $u^\ast,v^\ast\colon\D(B)\to\D(A)$ are related by the natural transformation $\alpha^\ast\colon u^\ast\to v^\ast.$ Since $u^\ast,v^\ast$ both admit left adjoints and right adjoints, we can hence consider the associated total mates or conjugate transformations
\begin{gather}
\alpha_!\colon v_!\stackrel{\eta}{\to} v_!u^\ast u_!\stackrel{\alpha^\ast}{\to} v_!v^\ast u_!\stackrel{\varepsilon}{\to} u_!, \label{eq:total-mate-lkan}\\
\alpha_\ast\colon v_\ast\stackrel{\eta}{\to} u_\ast u^\ast v_\ast\stackrel{\alpha^\ast}{\to} u_\ast v^\ast v_\ast\stackrel{\varepsilon}{\to} u_\ast. \label{eq:total-mate-rkan}
\end{gather}
\end{con}

The natural transformations \eqref{eq:total-mate-lkan} and \eqref{eq:total-mate-rkan} are compatible with the canonical morphisms \eqref{eq:mate-mor-cocont} and \eqref{eq:mate-mor-cont}, respectively. 

\begin{lem}\label{lem:total-mate-kan}
Let $F\colon\D\to\E$ be a morphism of derivator, let $u,v\colon A\to B$, and let $\alpha\colon u\to v$ be a natural transformation. The morphisms \eqref{eq:mate-mor-cocont} and \eqref{eq:total-mate-lkan} are compatible in that the following diagram commutes,
\[
\xymatrix{
u_! F_A\ar[r]&F_B u_!\\
v_! F_A\ar[u]^-{\alpha_!}\ar[r]& F_B v_!.\ar[u]_-{\alpha_!}
}
\]
\end{lem}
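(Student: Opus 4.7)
The plan is to reduce the commutativity of the square to the coherence of $F$ as a pseudo-natural transformation, using the defining formulas for the two mate constructions and then a direct diagram chase. The single non-formal ingredient needed is the coherence identity
\[
\alpha^\ast F_B\circ\gamma_u^{-1}=\gamma_v^{-1}\circ F_A\alpha^\ast\colon F_A u^\ast\Longrightarrow v^\ast F_B,
\]
which holds by the pseudo-naturality of $F$ applied to the 2-cell $\alpha\colon u\to v$ in $\cCat$ (it is the standard coherence square, rewritten by inverting the structure 2-cells $\gamma_u$ and $\gamma_v$).

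Concretely, I would expand both composites $\beta_u\circ(\alpha_! F_A)$ and $(F_B\alpha_!)\circ\beta_v$ using the defining formulas \eqref{eq:mate-mor-cocont} and \eqref{eq:total-mate-lkan}. Each becomes a long vertical composite built out of the units and counits of $u_!\dashv u^\ast$ and $v_!\dashv v^\ast$ on both the $\D$- and the $\E$-side, whiskerings of $\alpha^\ast$, and a single occurrence of either $\gamma_u^{-1}$ or $\gamma_v^{-1}$. Assembling the two expansions into a single large pasting diagram, I would use naturality of $\eta$ and $\varepsilon$ to bring the $\gamma$-cell and the $\alpha^\ast$-cell adjacent, then invoke the coherence identity above to pass from the first composite to the second, and finally collapse the superfluous unit--counit pairs by the triangular identities for the four adjunctions in play.

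The main obstacle is purely organizational: the full diagram has roughly a dozen cells, and one must carefully check that each intermediate square commutes for one of exactly three formal reasons (naturality of a unit or counit, a triangular identity, or the one instance of the coherence identity). Conceptually, the claim is an instance of the familiar statement that the mate correspondence is functorial with respect to vertical pasting of 2-cells, applied here with the outer 2-cells being the coherence cells $\gamma_u^{-1},\gamma_v^{-1}$ of the pseudo-natural transformation $F$ and the connecting 2-cells being $F_A\alpha^\ast$ and $\alpha^\ast F_B$.
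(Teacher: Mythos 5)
Your proposal is correct and is essentially the paper's argument: the paper also reduces everything to the single pseudo-naturality coherence identity $\gamma_v^{-1}\circ F_A\alpha^\ast=\alpha^\ast F_B\circ\gamma_u^{-1}$, phrased as the agreement of two pastings of squares, and then invokes the functoriality of mates with pasting rather than carrying out the unit/counit diagram chase by hand. Your explicit chase is just an unwinding of that same pasting argument, and your closing remark identifies exactly the conceptual mechanism the paper uses.
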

\begin{proof}
Let us consider the following two pastings which agree by the coherence properties of pseudo-natural transformations,
\[
\xymatrix{
\D(A)\ar[r]^-{F_A}&\E(A)\ar[r]^-\id&\E(A)&
\D(A)\ar[r]^-\id&\D(A)\ar[r]^-{F_A}&\E(A)\\
\D(B)\ar[r]_-{F_B}\ar[u]^-{u^\ast}\urtwocell\omit{}&\E(B)\ar[r]_-\id\ar[u]^-{u^\ast}\urtwocell\omit{}&\E(B),\ar[u]_-{v^\ast}&
\D(B)\ar[r]_-\id\ar[u]^-{u^\ast}\urtwocell\omit{}&\D(B)\ar[r]_-{F_B}\ar[u]^-{v^\ast}\urtwocell\omit{}&\E(B).\ar[u]_-{v^\ast}
}
\]
The functoriality of canonical mates with pasting and the description of the natural transformation $\alpha_!$ as the canonical mate associated to the inner two squares concludes the proof.
\end{proof}

The definition of equivalent morphisms in the $2$-category $\cDER$ of derivators (\autoref{defn:eqiv-mor}) has a variant in every $2$-category, hence there is also the notion of equivalent functors in $\cCat$.

\begin{prop}\label{prop:sorite-lim-II}
Let $F\colon\D\to\E$ be a morphism of derivators.
\begin{enumerate}
\item Every morphism of derivators preserves left Kan extensions along equivalences and left adjoint functors.
\item If $F$ preserves left Kan extensions along $u\colon A\to B$ and $v\colon B\to C$, then $F$ also preserves left Kan extensions along $vu\colon A\to C$.
\item If $u,v\colon A\to B$ are naturally isomorphic, then $F$ preserves left Kan extensions along $u$ if and only if $F$ preserves left Kan extensions along $v$.
\item If $u,v$ are equivalent functors in $\cCat$, then $F$ preserves left Kan extensions along $u$ if and only if $F$ preserves left Kan extensions along~$v$.
\end{enumerate}
\end{prop}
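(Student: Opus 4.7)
The plan is to prove the four parts in order, bootstrapping via \autoref{lem:total-mate-kan} and the functoriality of canonical mates under pasting. For (i), if $u\colon A\to B$ is an equivalence then both the unit and counit of $u_!\dashv u^\ast$ are invertible (since $u^\ast$ is itself an equivalence of categories with quasi-inverse $u_!$), so the canonical mate \eqref{eq:mate-mor-cocont}, a composite of these and of the invertible structure 2-cell $\gamma_u^{-1}$, is an iso. If instead $u\colon A\to B$ has a right adjoint $w\colon B\to A$ in $\cCat$, then 2-functoriality of $\D$ produces an adjunction $u^\ast\dashv w^\ast$ between $\D(B)$ and $\D(A)$, and uniqueness of adjoints yields a canonical isomorphism $u_!\cong w^\ast$; chasing through the definitions, the canonical mate for $u$ matches under this identification the pseudo-naturality iso $\gamma_w$ of $F$ at $w$, which is invertible by definition of a morphism of derivators.

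For (ii), I would paste the pseudo-naturality squares of $F$ at $u$ and $v$ vertically. Since $(vu)^\ast = u^\ast v^\ast$, the pasted 2-cell is the pseudo-naturality iso of $F$ at $vu$, and the functoriality of canonical mates with respect to pasting identifies the mate for $vu$ with the composite $(vu)_!F_A\cong v_!u_!F_A\to v_!F_Bu_!\to F_C v_!u_!\cong F_C(vu)_!$, in which the two middle morphisms are the mates for $u$ and $v$ and hence isos by hypothesis. For (iii), let $\alpha\colon u\Rightarrow v$ be a natural isomorphism. The total mate $\alpha_!\colon v_!\to u_!$ from \eqref{eq:total-mate-lkan} is then a natural iso, with inverse given by the total mate of $\alpha^{-1}$ (a short triangle-identity check). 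The commutative square of \autoref{lem:total-mate-kan} with both vertical arrows invertible immediately yields the claim.

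Part (iv) follows by combining the first three. If $u\colon A\to B$ and $v\colon C\to D$ are equivalent functors via equivalences $\phi\colon A\simeq C$, $\psi\colon B\simeq D$ and a natural iso $\psi u\cong v\phi$, then (i) ensures that $F$ automatically preserves left Kan extensions along $\phi$, $\psi$ and their chosen quasi-inverses; applying (ii) twice then reduces preservation along $u$ (resp.\ $v$) to preservation along $\psi u$ (resp.\ $v\phi$), and (iii) equates these using the given natural isomorphism. The main obstacle I anticipate is the careful bookkeeping in part~(i) for left adjoints, namely exhibiting the canonical identification $u_!\cong w^\ast$ in such a form that the canonical mate \eqref{eq:mate-mor-cocont} is visibly identified with $\gamma_w$. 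The remaining steps are essentially routine manipulations of mates.
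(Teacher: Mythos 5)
Your argument is correct, and parts (ii)--(iv) essentially coincide with the paper's proof (pasting of pseudo-naturality squares for (ii), invertibility of the total mate $\alpha_!$ together with \autoref{lem:total-mate-kan} for (iii), and combining (i) and (iii) for (iv)). The genuine divergence is in part (i) for left adjoint functors. The paper avoids any coherence chase: by \autoref{rmk:cocont-colimits} it suffices that every morphism preserve colimits of shape $(u/b)$, and since $(vb,\varepsilon_b)$ is a terminal object of $(u/b)$ the functor $\bbone\to(u/b)$ is homotopy final, so \autoref{lem:final} (applied with source $\bbone$, where every morphism trivially preserves colimits) finishes the argument. Your route instead identifies $u_!\cong w^\ast$ for the right adjoint $w$ and claims the canonical mate \eqref{eq:mate-mor-cocont} is carried to the pseudo-naturality constraint $\gamma_w$ under this identification. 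This is true --- it is the standard doctrinal-adjunction statement that the mate of $\gamma_u^{-1}$ with respect to the induced adjunction $w^\ast\dashv u^\ast$ is $\gamma_w$, a consequence of the coherence axioms of a pseudo-natural transformation applied to the triangle identities of $(u,w)$ --- but it is exactly the verification you flag as the obstacle and do not carry out, whereas the paper's argument makes it unnecessary. Two small points: the induced adjunction goes the other way from what you wrote, namely $w^\ast\dashv u^\ast$ (the unit $\id_A\Rightarrow wu$ restricts to $\id\Rightarrow u^\ast w^\ast$), which is what you in fact need to conclude $u_!\cong w^\ast$; and your reduction in (iv) needs the quasi-inverses of $\phi,\psi$ (to cancel, e.g., $u\cong\psi^{-1}(\psi u)$), which you do have from (i) but should invoke explicitly. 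What your approach buys is independence from the pointwise formula (Der4) and the slice-category machinery; what the paper's buys is that the only $2$-categorical input is the already-established \autoref{lem:final}.
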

\begin{proof}
If $u\colon A\to B$ is an equivalence, then $u^\ast$ is part of an adjoint equivalence $(u_!,u^\ast)$. Hence the canonical mate $u_! F_A\to F_Bu_!$ factors as a composition of three isomorphisms,
\[
u_!F\toiso u_!Fu^\ast u_!\toiso u_!u^\ast Fu_!\toiso Fu_!,
\]
establishing the first part of the first statement. Let $(u,v)\colon A\rightleftarrows B$ be an adjunction. To conclude that every morphism preserves left Kan extensions along $u$ it suffices by \autoref{rmk:cocont-colimits} to show that every morphism preserves colimits of shape $(u/b),b\in B$. Since $(vb,\varepsilon_b\colon uvb\to b)\in(u/b)$ is a terminal object, this defines a homotopy final functor $\bbone\to(u/b)$ and the statement hence follows from the following lemma (\autoref{lem:final}). The second statement is immediate from the functoriality of mates with pasting. As for the third statement, if $\alpha\colon u\toiso v$ is a natural isomorphism, then so is $\alpha^\ast\colon u^\ast\toiso v^\ast$ and also the total mate $\alpha_!\colon v_!\to u_!$ \eqref{eq:total-mate-lkan}. The third statement is thus immediate from \autoref{lem:total-mate-kan}, and together with the first statement this implies statement four. 
\end{proof}

\begin{lem}\label{lem:final}
Let $F\colon\D\to\E$ be a morphism of derivators and let $u\colon A\to B$ be homotopy final. If $F$ preserves colimits of shape~$A$, then $F$ preserves colimits of shape~$B$.
\end{lem}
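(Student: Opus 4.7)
The plan is to transport the hypothesis about colimits of shape $A$ along the homotopy finality square of $u$ to obtain the analogous statement for shape $B$. First I would unpack the hypothesis: $u$ being homotopy final says that the square
\[
\xymatrix@-0.5pc{
A \ar[r]^u \ar[d]_{\pi_A} \drtwocell\omit{\id} & B \ar[d]^{\pi_B} \\
\bbone \ar@{=}[r] & \bbone
}
\]
is homotopy exact, hence in every derivator, in particular in \D and in \E, the canonical mate $\varphi\colon (\pi_A)_! u^\ast \toiso (\pi_B)_!$ is a natural isomorphism.

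Next, for an arbitrary $X \in \D(B)$, I would assemble the diagram
\[
\xymatrix@C=3pc{
(\pi_B)_! F_B X \ar[r]^-{\beta^F_{\pi_B}(X)} & F_\bbone (\pi_B)_! X \\
(\pi_A)_! u^\ast F_B X \ar[u]^-{\varphi^\E_{F_BX}} \ar[d]_-{(\pi_A)_!(\gamma_u^{-1})_X} & F_\bbone (\pi_A)_! u^\ast X \ar[u]_-{F_\bbone\varphi^\D_X} \\
(\pi_A)_! F_A u^\ast X \ar[ur]_-{\beta^F_{\pi_A}(u^\ast X)} &
}
\]
in which the top-left vertical is $\varphi^\E$ at $F_BX$, the lower-left vertical is $(\pi_A)_!$ applied to the pseudo-naturality isomorphism of $F$ along $u$, the right vertical is $F_\bbone$ applied to $\varphi^\D$ at $X$, and the bottom diagonal is the canonical mate for $F$ and $\pi_A$ evaluated at $u^\ast X$. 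The bottom diagonal is an isomorphism by the hypothesis that $F$ preserves colimits of shape $A$, and the three vertical arrows are isomorphisms by the previous paragraph. Once commutativity is established, the top arrow $\beta^F_{\pi_B}(X)$ is therefore also an isomorphism, and since $X$ was arbitrary this gives that $F$ preserves colimits of shape $B$.

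The main obstacle is verifying the commutativity of this pentagon, which is a purely coherence statement. I would prove it by recognizing both composite legs from $(\pi_A)_! F_A u^\ast X$ to $F_\bbone (\pi_B)_! X$ as the canonical mate of the same pasting in the $2$-category of categories, namely the vertical stack of the pseudo-naturality square of $F$ along $u$, the pseudo-naturality square of $F$ along $\pi_B$, and the homotopy finality square, where the composite of the two pseudo-naturality squares is the pseudo-naturality square of $F$ along $\pi_A = \pi_B u$ (using $u^\ast \pi_B^\ast = \pi_A^\ast$). The functoriality of canonical mates with respect to pasting then produces both decompositions simultaneously, which is precisely the desired commutativity.
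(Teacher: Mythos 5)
Your proof is correct and takes essentially the same route as the paper: both arguments exploit the homotopy exactness of the finality square of $u$ (giving invertible mates $(\pi_A)_!u^\ast\toiso(\pi_B)_!$ in both \D and \E) and identify the two composites around your pentagon as canonical mates of the same total pasting, so that the functoriality of mates with pasting yields the commutativity and hence the invertibility of $\beta^F_{\pi_B}$. The only difference is presentational (you evaluate at an object $X$ and draw a pentagon, the paper compares two pasting diagrams of functors), plus a harmless mismatch with the paper's convention on which direction is called $\gamma_u$ versus $\gamma_u^{-1}$.
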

\begin{proof}
Let us recall that a functor $u\colon A\to B$ is homotopy final if and only if the square
\begin{equation}\label{eq:htpy-exact-final}
\vcenter{
\xymatrix{
A\ar[r]^-u\ar[d]_-{\pi_A}\xtwocell[1,1]{}\omit&B\ar[d]^-{\pi_B}\\
\bbone\ar[r]&\bbone
}
}
\end{equation}
is homotopy exact. To reformulate that~$F$ preserves colimits of shape~$B$ let us consider the pasting on the left in
\begin{equation}\label{eq:lem-final}
\vcenter{
\xymatrix{
\E(A)\ar[d]_-{(\pi_A)_!}& \E(B)\ar[l]_-{u^\ast}\ar[d]_-{(\pi_B)_!}& \D(B)\ar[d]^-{(\pi_B)_!}\ar[l]_-F&
\E(A)\ar[d]_-{(\pi_A)_!}& \D(A)\ar[l]_-F\ar[d]_-{(\pi_A)_!}& \D(B)\ar[d]^-{(\pi_B)_!}\ar[l]_-{u^\ast}\\
\E(\bbone)\urtwocell\omit{}&\E(\bbone)\ar[l]^-=\urtwocell\omit{}&\D(\bbone),\ar[l]^-F&
\E(\bbone)\urtwocell\omit{}&\D(\bbone)\ar[l]^-F\urtwocell\omit{}&\D(\bbone).\ar[l]^-=
}
}
\end{equation}
Using the homotopy exactness of~\eqref{eq:htpy-exact-final} and the functoriality of mates with pasting, we see that $F$ preserves colimits of shape~$B$ if and only if the pasting on the left is an isomorphism. Up to a vertical pasting by a pseudo-naturality isomorphism of $F$, this pasting agrees with the pasting on the right. Using again the homotopy exactness of \eqref{eq:htpy-exact-final} and our assumption on~$F$, we see that the pasting on the right indeed is an isomorphism, concluding the proof.
\end{proof} 

\begin{eg}
If $A\in\cCat$ admits a final object~$\omega$, then every morphism of derivators preserves colimits of shape $A$. In fact, in this case there is an adjunction $(\pi_A,\omega)\colon A\rightleftarrows\bbone$, and the result hence follows from \autoref{prop:sorite-lim-II} applied to $\pi_A$. Alternatively, the result follows from an application of \autoref{lem:final} to $\omega\colon\bbone\to A$.
\end{eg}

\begin{warn}
Note that, in general, the converse to \autoref{lem:final} is not true. Let $A\in\cCat$ and $\pi_A\colon A\to \bbone$ be the unique functor. The functor $\pi_A$ is homotopy final as soon as the nerve $NA$ is weakly contractible (\cite[Cor.~3.13]{gps:mayer}). Considering $A=\ulcorner$ for example in the represented case, this shows that, in general, the converse to \autoref{lem:final} fails.
\end{warn}

In order to also obtain a positive statement in the converse direction, it suffices to insist on $u^\ast$ being essentially surjective. 

\begin{lem}\label{lem:final-II}
Let $u\colon A\to B$ be homotopy final such that $u^\ast\colon\D(B)\to\D(A)$ is essentially surjective for every derivator \D. A morphism of derivators preserves colimits of shape~$A$ if and only if it preserves colimits of shape~$B$.
\end{lem}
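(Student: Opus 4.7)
The only-if direction is already \autoref{lem:final} and needs no new input, so my plan concerns only the converse: assuming $F\colon\D\to\E$ preserves colimits of shape~$B$, I want to deduce preservation of shape-$A$ colimits by running the argument of \autoref{lem:final} in reverse and using essential surjectivity of $u^\ast$ at the end to bootstrap from special objects to arbitrary ones.

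First, I will revisit the two pastings \eqref{eq:lem-final}. As noted in the proof of \autoref{lem:final}, they agree up to vertical composition with a pseudo-naturality isomorphism of~$F$, so the mates they induce are canonically identified. The assumption that $F$ preserves colimits of shape~$B$ says the pasting on the left induces an isomorphism when applied to any $Y\in\D(B)$; hence so does the pasting on the right.

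Next, I will exploit the right-hand pasting. Its right square is the homotopy-exact square \eqref{eq:htpy-exact-final} applied to~$\D$, so its mate is always an isomorphism. By the functoriality of mates with pasting, this forces the mate associated to the left square of the right-hand pasting to be an isomorphism on every object of the form $u^\ast Y$. But the mate of that left square is precisely the canonical morphism $(\pi_A)_! F_A\to F_\bbone(\pi_A)_!$ expressing preservation of shape-$A$ colimits; what we have shown is that it is an isomorphism when evaluated on objects in the essential image of $u^\ast$.

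Finally, I will pass from objects of the form $u^\ast Y$ to arbitrary $X\in\D(A)$ using essential surjectivity of $u^\ast\colon\D(B)\to\D(A)$: given any $X$, choose $Y\in\D(B)$ together with an isomorphism $X\cong u^\ast Y$, and invoke naturality of the canonical mate to conclude that it is an isomorphism at~$X$ as well. The only (rather mild) obstacle is the careful identification of the mate extracted from the left square of the right-hand pasting with the mate of interest; no new conceptual ingredient is required beyond the machinery already employed in \autoref{lem:final}, and essential surjectivity is exactly the hypothesis that upgrades the local conclusion on objects $u^\ast Y$ to the required conclusion on all of $\D(A)$.
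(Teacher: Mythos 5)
Your proposal is correct and follows essentially the same route as the paper's proof: both arguments reuse the two pastings \eqref{eq:lem-final}, transfer the isomorphism from the left pasting to the right one via the pseudo-naturality identification, cancel the homotopy-exactness mate to isolate the shape-$A$ mate on objects of the form $u^\ast Y$, and then invoke essential surjectivity of $u^\ast$ together with naturality to conclude for all of $\D(A)$.
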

\begin{proof}
By \autoref{lem:final} it suffices to show that a morphism $F\colon\D\to\E$ of derivators which preserves colimits of shape $B$ also preserves colimits of shape $A$. We again consider the pastings \eqref{eq:lem-final} which match up to a vertical pasting by a pseudo-naturality isomorphism. By assumption on $F$ and $u$, the pasting on the left is an isomorphism, hence so is the pasting on the right which is given by
\[
\colim_A F u^\ast\to F \colim_A u^\ast \toiso F \colim_B.
\]
Since the restriction morphism $u^\ast$ is essentially surjective, we conclude that $F$ preserves colimits of shape $A$.
\end{proof} 

\begin{cor}\label{cor:final-III}
Let $(l,r)\colon B\rightleftarrows A$ be a reflective localization, i.e., an adjunction such that $r$ is fully faithful. A morphism of derivators preserves colimits of shape $A$ if and only if it preserves colimits of shape $B$.
\end{cor}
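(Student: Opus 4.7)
My plan is to apply \autoref{lem:final-II} with $u = r\colon A\to B$. This requires verifying two hypotheses: that $r$ is homotopy final, and that $r^\ast\colon\D(B)\to\D(A)$ is essentially surjective for every derivator $\D$.

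For the first hypothesis, the adjunction $l\dashv r$ implies that for every $b\in B$ the slice category $b/r$, whose objects are pairs $(a,\varphi\colon b\to ra)$, has an initial object $(lb,\eta_b\colon b\to rlb)$; the unique morphism from it to an arbitrary $(a,\varphi)$ is the adjoint transpose of $\varphi$. Since a category with an initial object has contractible nerve, this is enough to conclude that $r$ is homotopy final, by the standard criterion used implicitly in the proof of \autoref{prop:sorite-lim-II}(i) and alluded to in the Warning preceding this corollary.

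For the second hypothesis, I would use that $r$ being fully faithful and $l\dashv r$ is equivalent to the counit $\varepsilon\colon lr\to\id_A$ being a natural isomorphism in $\cCat$. Since a derivator is a (pseudo-)$2$-functor $\cCat\op\to\cCAT$, applying $\D$ yields $r^\ast\circ l^\ast = (l\circ r)^\ast\cong(\id_A)^\ast = \id_{\D(A)}$, so every $X\in\D(A)$ is isomorphic to $r^\ast(l^\ast X)$ and hence lies in the essential image of $r^\ast$. With both hypotheses in hand, \autoref{lem:final-II} yields the corollary. The only mildly non-routine step is establishing homotopy finality of $r$, but this reduces to the standard initial-object argument above, so I do not foresee any substantial obstacle.
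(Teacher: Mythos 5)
Your proposal is correct and follows essentially the same route as the paper: apply \autoref{lem:final-II} to $u=r$, noting that $r$ is homotopy final as a right adjoint and that $r^\ast$ is essentially surjective because the counit $\varepsilon\colon lr\to\id_A$ is invertible (the paper phrases this via the induced adjunction $(r^\ast,l^\ast)$ with invertible counit $\varepsilon^\ast$, which is the same computation as your $r^\ast l^\ast\cong\id_{\D(A)}$). The only difference is that you spell out the homotopy finality of $r$ via the initial objects $(lb,\eta_b)$ of the slices $(b/r)$, whereas the paper simply cites the standard fact that right adjoints are homotopy final.
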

\begin{proof}
As a right adjoint the functor $r\colon A\to B$ is homotopy final, hence by \autoref{lem:final-II} it remains to show that $r^\ast\colon\D(B)\to\D(A)$ is essentially surjective for every derivator~\D. But if $(l,r,\eta\colon\id\to rl,\varepsilon\colon lr\to\id)$ is an adjunction, then we obtain an induced adjunction $(r^\ast,l^\ast,\eta^\ast\colon\id\to l^\ast r^\ast,\varepsilon^\ast\colon r^\ast l^\ast\to\id)$. Since $r$ is fully faithful, $\varepsilon\colon lr\to \id$ is an isomorphism, hence so is $\varepsilon^\ast\colon r^\ast l^\ast\to\id$, and this implies that $r^\ast$ is essentially surjective.
\end{proof}

We collect an additional closure property of the class of functors along which a fixed morphism of derivators preserves left Kan extensions. This cancellation property will be useful in \S\ref{sec:exact-htpy-finite}.

\begin{lem}\label{lem:cancel-lkan}
Let $F\colon\D\to\E$ be a morphism of derivators and let $u\colon A\to B, v\colon B\to C$ be fully faithful functors in $\cCat$. If $F$ preserves left Kan extensions along $vu\colon A\to C$, then $F$ preserves left Kan extensions along $u$.
\end{lem}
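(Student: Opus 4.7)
The plan is to reduce the problem to an essential image calculation via Lemma \ref{lem:ff-kan-ess-im}, which characterizes preservation of left Kan extensions along fully faithful functors in terms of essential images. First I would note that $vu\colon A\to C$ is again fully faithful, since the composite of two fully faithful functors is fully faithful, so that Lemma \ref{lem:ff-kan-ess-im} applies both to $u$ and to $vu$.

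The key step is a transfer of the essential image property along $v^\ast$. By hypothesis and Lemma \ref{lem:ff-kan-ess-im}, for every $X\in\D(A)$ the object $F(vu)_!(X)\cong Fv_!u_!(X)$ lies in the essential image of $(vu)_!\cong v_!u_!\colon\E(A)\to\E(C)$. Pick $Z\in\E(A)$ together with an isomorphism $v_!u_!Z\iso Fv_!u_!(X)$ in $\E(C)$. Now I would apply $v^\ast$ to this isomorphism and combine it with the pseudo-naturality isomorphism $\gamma_v\colon v^\ast F_C\iso F_B v^\ast$ and the fact that the unit $\eta\colon\id\to v^\ast v_!$ is an isomorphism (because $v$ is fully faithful, both in $\D$ and in $\E$). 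Chasing the resulting diagram
\[
u_!Z\ot^{\eta}_\cong v^\ast v_!u_!Z\iso v^\ast Fv_!u_!(X)\too[\gamma_v]_\cong F v^\ast v_!u_!(X)\too[\eta^{-1}]_\cong Fu_!(X)
\]
produces an isomorphism $Fu_!(X)\cong u_!Z$, showing that $Fu_!(X)$ lies in the essential image of $u_!\colon\E(A)\to\E(B)$. A second application of Lemma \ref{lem:ff-kan-ess-im}, this time for $u$, then yields that $F$ preserves left Kan extensions along $u$, as desired.

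The main thing to be careful about is the interplay of the two structural isomorphisms: the fully-faithfulness unit isomorphism $\eta\colon\id\iso v^\ast v_!$ (in both derivators) and the pseudo-naturality isomorphism $\gamma_v$ of the morphism $F$. Both are harmless individually, but one has to make sure that stringing them together really lands at $Fu_!(X)$ and not at some conjugate; this is where the hypothesis that \emph{$v$}, and not merely $vu$, is fully faithful is used. With this established, the statement falls out directly from Lemma \ref{lem:ff-kan-ess-im} without needing to unwind the mate $\beta_u$ explicitly.
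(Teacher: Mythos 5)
Your proof is correct and follows essentially the same route as the paper: both reduce the claim via Lemma~\ref{lem:ff-kan-ess-im} to showing that $Fu_!(X)$ lies in the essential image of $u_!$, and both establish this by the chain of isomorphisms combining the unit isomorphism $\id\cong v^\ast v_!$ (fully faithfulness of $v$), the pseudo-naturality of $F$, the identification $(vu)_!\cong v_!u_!$, and the hypothesis on $vu$. The only cosmetic difference is that the paper takes the witness $Z$ to be $F(X)$ itself, exhibiting $Fu_!(X)\cong u_!F(X)$ directly, whereas you work with an abstract witness; this changes nothing.
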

\begin{proof}
Since $u$ is fully faithful, it suffices by \autoref{lem:ff-kan-ess-im} to show that $Fu_!(X),$ $X\in\D(A),$ lies in the essential image of $u_!\colon\E(A)\to\E(B)$. There is the following chain of natural isomorphisms establishing this fact,
\begin{align}
F(u_!X)&\cong F(v^\ast v_!u_!X)\\
&\cong v^\ast F(v_!u_!X)\\
&\cong v^\ast F((vu)_!X)\\
&\cong v^\ast(vu)_! F(X)\\
&\cong v^\ast v_!u_! F(X)\\
&\cong u_!F(X).
\end{align}
In fact, these isomorphisms are respectively given by the fully faithfulness of $v_!$, the pseudo-naturality of $F$, the uniqueness of adjoints (\autoref{lem:lkan-commute-naive}), the assumption on $F$, and again the uniqueness of adjoints and the fully faithfulness of $v_!$. 
\end{proof}

\begin{warn}\label{warn:cancel-lkan}
The dual version of \autoref{lem:cancel-lkan} allows us to conclude something about right Kan extensions along the first functor. For a counterexample to the statement making conclusion about the second functor see \autoref{warn:cancel-lkan-2}.
\end{warn}

\section{Continuity and parameters}
\label{sec:paras}

In this section we discuss some closure properties related to the passage to parametrized versions of morphisms and natural transformations. This allows us to reformulate questions related to cocontinuity internally to the $2$-category of derivators.

\begin{prop}[{\cite[Prop.~2.5]{groth:ptstab}}]\label{prop:restriction-continuous}
For every derivator \D and every functor $v\colon B\to B'$ the restriction morphism $v^\ast\colon\D^{B'}\to \D^{B}$ is continuous and cocontinuous.
\end{prop}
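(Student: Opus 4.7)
The plan is as follows. By \autoref{lem:dual-prin-mor}, applied to the restriction along $v\op\colon B\op\to(B')\op$ on the derivator $\D\op$, continuity of $v^\ast$ will follow from cocontinuity, so it suffices to establish cocontinuity. By \autoref{lem:cocont-colimits}, cocontinuity reduces further to showing that $v^\ast$ preserves colimits of shape $A$ for every small category $A$. Unwinding the definition of the shifted derivator $\D^{B'}$ --- in which left Kan extension along $\pi_A$ is computed in $\D$ as $(\pi_A\times\id_{B'})_!$ --- this amounts to establishing the homotopy exactness in $\D$ of the naturality square
\[
\xymatrix{
A\times B\ar[r]^-{\id_A\times v}\ar[d]_-{\pi_A\times\id_B}& A\times B'\ar[d]^-{\pi_A\times\id_{B'}}\\
B\ar[r]_-v & B'
}
\]
for every $A\in\cCat$.

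To verify this, the plan is to check the canonical mate pointwise by (Der2) at each $b\colon\bbone\to B$. Pasting on the left with the slice square associated to $b$, which is homotopy exact by (Der4), and using the functoriality of mates with pasting, the problem reduces to the homotopy exactness of the outer pasting
\[
\xymatrix{
(\pi_A\times\id_B)/b\ar[r]\ar[d]& A\times B'\ar[d]^-{\pi_A\times\id_{B'}}\\
\bbone\ar[r]_-{v(b)}& B'.
}
\]
Under the identification $(\pi_A\times\id_B)/b\cong A\times(B/b)$, the top map sends $(a,b'',\gamma)$ to $(a,v(b''))$. Inserting a further slice square for the functor $\pi_A\times\id_{B'}$ at the object $v(b)$, one factors the outer pasting as
\[
\xymatrix{
A\times(B/b)\ar[r]^-{\id_A\times\bar v}\ar[d]& A\times(B'/v(b))\ar[r]\ar[d]& A\times B'\ar[d]^-{\pi_A\times\id_{B'}}\\
\bbone\ar[r]_-=& \bbone\ar[r]_-{v(b)}& B',
}
\]
where $\bar v\colon B/b\to B'/v(b)$ is the canonical functor induced by $v$. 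Since the right square is again a slice square, homotopy exact by (Der4), the pasting property reduces the remaining task to the homotopy exactness of the left square.

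The main point of the argument lies in this final step. Both $B/b$ and $B'/v(b)$ admit terminal objects $(b,\id_b)$ and $(v(b),\id_{v(b)})$ respectively, and $\bar v$ preserves them. Consequently, the projections $\pi_{B/b}$ and $\pi_{B'/v(b)}$ are left adjoint in $\cCat$ to the inclusions $j_b$ and $j_{v(b)}$ of the respective terminal objects, with the compatibility $\bar v\circ j_b=j_{v(b)}$. Appealing to the standard identification $\pi_!\cong j^\ast$ induced by such an adjunction $\pi\dashv j$ (a consequence of $\D$ being a $2$-functor $\cCat\op\to\cCAT$), both sides of the canonical mate for the left square will be recognized as $(\pi_A)_!\circ(\id_A\times j_{v(b)})^\ast$, with the mate itself becoming the identity under this identification. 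The main obstacle I anticipate is the careful bookkeeping in this final step: tracking unit and counit interactions through the several pastings and adjunction identifications, while routine, is where the most delicate verification will occur.
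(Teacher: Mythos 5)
First, a point of reference: the paper does not actually prove this proposition --- it is quoted verbatim from \cite[Prop.~2.5]{groth:ptstab} --- so there is no internal proof to compare against and your argument has to stand on its own. Your chain of reductions does stand up: duality (\autoref{lem:dual-prin-mor}) reduces continuity to cocontinuity, \autoref{lem:cocont-colimits} reduces cocontinuity to preservation of $A$-shaped colimits, and since $v^\ast$ is a strict morphism with components $(\id_A\times v)^\ast$, the mate \eqref{eq:mate-mor-cocont} for $\pi_A$ is exactly the canonical mate of the naturality square you display. The pointwise check at $b\in B$ via (Der2), the identification $((\pi_A\times\id_B)/b)\cong A\times(B/b)$, and the factorization of the resulting pasting through the slice square of $\pi_A\times\id_{B'}$ over $v(b)$ are all correct (the $2$-cells of the two pastings do agree), so by (Der4) and the functoriality of mates with pasting everything comes down to the homotopy exactness of your left-hand square, i.e.\ to the homotopy finality of $\id_A\times\bar v$.

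That last step is the one place where what you have written is not yet a proof. Knowing that $(\id_A\times\pi_{B/b})_!\cong(\id_A\times j_b)^\ast$ and likewise over $v(b)$ shows that the \emph{source and target} of the remaining mate are isomorphic functors; it does not by itself show that the mate is invertible, and ``the mate becomes the identity under this identification'' is precisely the assertion that still requires proof. It is true, but checking it means chasing units, counits and the canonical isomorphism between the two left adjoints of $(\id_A\times\pi_{B/b})^\ast$ through the definition of \eqref{eq:mate-mor-cocont} --- exactly the bookkeeping you defer. A cleaner way to close the step, entirely within the paper's toolkit, is to verify homotopy finality of $\id_A\times\bar v$ directly: for $(a,x')\in A\times(B'/v(b))$ the comma category $\big((a,x')/(\id_A\times\bar v)\big)$ decomposes as $(a/A)\times(x'/\bar v)$; the first factor has an initial object, and the second has a terminal object (the image under $\bar v$ of the terminal object of $B/b$, using that $\bar v$ preserves terminal objects), so both nerves are contractible and the characterization of homotopy final functors from \cite{gps:mayer} applies. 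Finally, note that there is a much shorter route --- $v^\ast$ is both a left adjoint morphism (to $v_\ast$) and a right adjoint morphism (to $v_!$), so \autoref{prop:sorite-lim-I} and its dual give both halves at once --- but this presupposes the parametrized Kan extension adjunctions of derivators, whose construction is itself bootstrapped from statements very close to the one you are proving; your direct argument is therefore arguably the more honest one.
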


\begin{prop}[{\cite[Cor.~2.7]{groth:ptstab}.}]\label{prop:shift-cocont}
Let $F\colon\D\to\E$ be a morphism of derivators, let $u\colon A\to A'$, and let $B\in\cCat$. If $F$ preserves left Kan extensions along $u$, then $F$ also preserves left Kan extensions along $\id\times u\colon B\times A\to B\times A'$. 
\end{prop}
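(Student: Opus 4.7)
The plan is to check that the canonical mate
\[
\alpha\colon (\id_B\times u)_! F_{B\times A}\to F_{B\times A'}(\id_B\times u)_!
\]
is an isomorphism pointwise via (Der2). Fix $(b,a')\in B\times A'$ and decompose $(b,a')^\ast=a'^\ast\circ(b\times\id_{A'})^\ast$. Since $a'^\ast$ preserves isomorphisms, it suffices to show that $(b\times\id_{A'})^\ast\alpha$ is an isomorphism.

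The main technical input, and the step I expect to be the principal obstacle, is the homotopy exactness of the commutative square
\[
\xymatrix{
A\ar[r]^-u\ar[d]_-{b\times\id_A}&A'\ar[d]^-{b\times\id_{A'}}\\
B\times A\ar[r]_-{\id_B\times u}&B\times A',
}
\]
that is, a canonical isomorphism $u_!(b\times\id_A)^\ast\toiso(b\times\id_{A'})^\ast(\id_B\times u)_!$. I would verify this pointwise via (Der2) and (Der4): at $a'\in A'$ the comma category $((\id_B\times u)/(b,a'))$ factors as $(B/b)\times(u/a')$, and since $(B/b)$ has the terminal object $(b,\id_b)$, \autoref{cor:fubini} collapses the resulting colimit over the product to the colimit over $(u/a')$ of the restriction along $b\times\id_A$, matching the value of $u_!(b\times\id_A)^\ast$ at $a'$.

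Granted this homotopy exactness, I would compare two pastings of $2$-cells with outer corners $\D(B\times A)$ and $\E(A')$ and horizontal functors $F$. The first has $\alpha$ on top and the pseudo-naturality isomorphism of $F$ under $(b\times\id_{A'})^\ast$ below, its left vertical being $(b\times\id_{A'})^\ast(\id_B\times u)_!$. The second has the pseudo-naturality isomorphism of $F$ under $(b\times\id_A)^\ast$ on top and the mate $\beta\colon u_!F_A\to F_{A'}u_!$ below, its left vertical being $u_!(b\times\id_A)^\ast$. By the functoriality of mates with pasting together with the homotopy exactness above, the two pastings define the same total $2$-cell. Since $\beta$ is an isomorphism by hypothesis, the second total $2$-cell is an isomorphism; hence so is the first, and the pseudo-naturality isomorphism in its bottom forces $(b\times\id_{A'})^\ast\alpha$ to be an isomorphism. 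Applying $a'^\ast$ and invoking (Der2) completes the proof.
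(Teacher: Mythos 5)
The paper offers no proof of this proposition---it is imported by citation from \cite[Cor.~2.7]{groth:ptstab}---so there is no internal proof to compare against; your argument is correct and follows the standard mate-pasting pattern that the paper itself uses in \autoref{prop:cocont-ptws}, \autoref{lem:final}, and \autoref{prop:mor-colim-vs-cocone}. Two remarks. First, the step you single out as the principal obstacle, namely the homotopy exactness of the square with verticals $b\times\id_A$ and $b\times\id_{A'}$, is precisely the statement that the restriction morphism $v^\ast\colon\D^{B}\to\D^{\bbone}$ for $v=b\colon\bbone\to B$ preserves left Kan extensions along $u$; this is a special case of \autoref{prop:restriction-continuous}, which the paper already has in hand, and indeed the proof of \autoref{prop:cocont-ptws} invokes exactly the cocontinuity of $b^\ast$ for this purpose. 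You could therefore cite that result rather than re-derive it, although your verification via (Der2), (Der4), the identification $((\id_B\times u)/(b,a'))\cong (B/b)\times(u/a')$, \autoref{cor:fubini}, and the homotopy finality of the terminal object of $(B/b)$ is the correct proof of that special case. Second, the assertion that ``the two pastings define the same total $2$-cell'' is a mild abuse: the two composites have canonically isomorphic rather than equal sources and targets, and they agree only after whiskering with the (invertible) homotopy-exactness mates for \D and \E and the pseudo-naturality constraint $\gamma_{b\times\id}$ of $F$, using the coherence of $F$ with respect to $(\id_B\times u)\circ(b\times\id_A)=(b\times\id_{A'})\circ u$. This is the same abuse the paper permits itself (``up to a vertical pasting by a pseudo-naturality isomorphism''), and the resulting conclusion---one total mate is invertible if and only if the other is, whence $(b\times\id_{A'})^\ast\alpha$ is invertible because $\beta$ and the two exactness mates are---is sound; applying (Der2) over all $b$ and $a'$ then finishes the proof as you say.
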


\begin{cor}\label{cor:shift-cocont}
A morphism of derivators $F\colon\D\to\E$ preserves left Kan extensions along $u\colon A\to A'$ if and only if $F^B\colon\D^B\to\E^B,B\in\cCat$, preserves left Kan extensions along $u$.
\end{cor}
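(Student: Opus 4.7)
The plan is to prove the two implications separately. Both follow essentially from results already at hand: Proposition \ref{prop:shift-cocont} gives the forward direction, while Proposition \ref{prop:sorite-lim-II}(iv) handles the converse. The only real bookkeeping is the identification of what it means for the parametrized morphism $F^B$ to preserve Kan extensions along $u$ in terms of $F$ preserving Kan extensions in $\D$.

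For the forward direction, I would begin by recording the following identification. With the convention $\D^B(A) := \D(B\times A)$, the left Kan extension functor $u_!\colon\D^B(A)\to\D^B(A')$ is by construction the functor $(\id_B\times u)_!\colon\D(B\times A)\to\D(B\times A')$ in \D, and the parametrized morphism $F^B$ acts on $\D^B(A)=\D(B\times A)$ as $F_{B\times A}$. Consequently the canonical mate asserting that $F^B$ preserves left Kan extensions along $u$ coincides with the canonical mate asserting that $F$ preserves left Kan extensions along $\id_B\times u\colon B\times A\to B\times A'$. Granting this identification, the implication is exactly Proposition \ref{prop:shift-cocont}.

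For the converse, I would specialize the hypothesis to $B=\bbone$. Under the same identification this says that $F$ preserves left Kan extensions along $\id_\bbone\times u\colon \bbone\times A\to \bbone\times A'$. The projection functors $\bbone\times A\to A$ and $\bbone\times A'\to A'$ are isomorphisms of small categories, and the square
\[
\xymatrix{
\bbone\times A\ar[r]^-{\id\times u}\ar[d]_-\cong&\bbone\times A'\ar[d]^-\cong\\
A\ar[r]_-u&A'
}
\]
commutes on the nose. This exhibits $\id_\bbone\times u$ and $u$ as equivalent functors in $\cCat$ (in the sense discussed immediately before Proposition \ref{prop:sorite-lim-II}). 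An application of Proposition \ref{prop:sorite-lim-II}(iv) then yields that $F$ preserves left Kan extensions along $u$, as required.

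There is no substantial obstacle here; the result is an almost immediate combination of two already-established facts. The only delicate point is the notational identification in the first paragraph, which one may regard as essentially definitional but should be stated explicitly so that both the application of Proposition \ref{prop:shift-cocont} and the specialization $B=\bbone$ are unambiguous.
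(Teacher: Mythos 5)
Your proof is correct and matches the intended argument: the paper states this corollary without proof precisely because it is the combination of \autoref{prop:shift-cocont} (forward direction, after identifying the mate for $F^B$ along $u$ with the mate for $F$ along $\id_B\times u$) with the specialization $B=\bbone$ and invariance under equivalent functors for the converse. Your explicit handling of the identification $\bbone\times A\cong A$ via \autoref{prop:sorite-lim-II}(iv) is exactly the right way to make the converse precise.
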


There is the following closely related result.

\begin{prop}\label{prop:cocont-ptws}
Let $\D,\E$ be derivators, let $u\colon A\to A'$, let $B\in\cCat,$ and let $F\colon\D\to\E^B$ be a morphism of derivators. The morphism $F$ preserves left Kan extensions along $u$ if and only if $b^\ast F\colon\D\to\E,b\in B,$ preserves left Kan extensions along $u$.
\end{prop}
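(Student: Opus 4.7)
The proof naturally splits into two directions. For the forward implication, I would observe that $b^\ast F$ is the composition $b^\ast \circ F$ with the restriction morphism $b^\ast\colon \E^B \to \E$. By Proposition \ref{prop:restriction-continuous} the morphism $b^\ast$ is cocontinuous, and by Proposition \ref{prop:sorite-lim-I}(ii) the class of morphisms preserving left Kan extensions along $u$ is closed under composition. This direction requires no work beyond citing these two facts.

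For the converse, assume that each $b^\ast F$ preserves left Kan extensions along $u$. The goal is to show that the canonical mate $\beta^F\colon u_! F_A \to F_{A'} u_!$ associated with $F$ (with $u_!$ taken in $\E^B$ on the left and in $\D$ on the right) is a natural isomorphism of functors $\D(A) \to \E^B(A') = \E(B \times A')$. My plan is to invoke axiom (Der2): it suffices to check that the restriction $b^\ast \beta^F$ is a natural isomorphism in $\E(A')$ for every $b \in B$.

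The key step is then to identify $b^\ast \beta^F$, up to a canonical isomorphism, with the mate $\beta^{b^\ast F}$ for the composite morphism $b^\ast F$. This is an instance of the compatibility of canonical mates with composition of morphisms of derivators, which is the computation underlying Proposition \ref{prop:sorite-lim-I}(ii): the mate for a composite $G \circ F$ at $u$ factors as the pasting of the mate for $F$ at $u$ with the mate for $G$ at $u$. Applied with $G = b^\ast$, the mate for $b^\ast$ is a natural isomorphism since $b^\ast$ is cocontinuous by Proposition \ref{prop:restriction-continuous}, so this factorization shows that $\beta^{b^\ast F}$ is a natural isomorphism if and only if $b^\ast \beta^F$ is. The hypothesis therefore provides the required isomorphism at each $b \in B$, and (Der2) concludes.

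The main obstacle is purely bookkeeping: one must make the pasting decomposition of $\beta^{b^\ast F}$ explicit in terms of the structure isomorphisms $\gamma$, units, and counits, so as to be certain that the mate of the composite really does factor through $b^\ast \beta^F$ in the stated way. Once this compatibility is written down — essentially the same computation that proves Proposition \ref{prop:sorite-lim-I}(ii) — the argument is complete.
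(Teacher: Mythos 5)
Your proposal is correct and follows essentially the same route as the paper: the forward direction via cocontinuity of $b^\ast$ (\autoref{prop:restriction-continuous}) and closure under composition, and the converse via (Der2) together with the pasting decomposition of the mate for $b^\ast F$ into the (invertible) mate for $b^\ast$ and $b^\ast$ applied to the mate for $F$. The paper phrases the "bookkeeping" step you flag as the functoriality of mates with pasting applied to the horizontal composite of the two relevant squares, which is exactly the computation you describe.
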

\begin{proof}
If $F$ preserves left Kan extensions along $u$, then so does $b^\ast F$ since evaluation morphisms are cocontinuous (\autoref{prop:restriction-continuous} and \autoref{prop:sorite-lim-I}). Conversely, in order to conclude that $F$ preserves left Kan extensions along $u$ we have to show that the canonical mate associated to the left square in 
\[
\xymatrix{
\D(A)\ar[r]^-F&\E(B\times A)\ar[r]^-{b^\ast}&\E(A)\\
\D(A')\ar[r]_-F\ar[u]^-{u^\ast}\urtwocell\omit{}&\E(B\times A')\ar[r]_-{b^\ast}\ar[u]^-{(\id\times u)^\ast}\urtwocell\omit{}&\E(A'),\ar[u]_-{u^\ast}
}
\]
is an isomorphism. By (Der2) it suffices to check this at every object $b\in B$, which, by the cocontinuity of $b^\ast$ (\autoref{prop:restriction-continuous}), is the case as soon as the canonical mate of the above pasting is an isomorphism for every $b\in B$. Since this precisely means that $b^\ast F,b\in B,$ preserves left Kan extensions along $u$, this concludes the proof.
\end{proof}

Based on these compatibilities, one can show that most of the results obtained in this paper have parametrized reformulations internally to the $2$-category of derivators. We begin by recalling that the passage to shifted derivators defines a pseudo-functor of two variables
\[
(-)^{(-)}\colon\cCat\op\times\cDER\to\cDER\colon (A,\D)\mapsto\D^A.
\]
While the partial pseudo-functors $(-)^A\colon\cDER\to\cDER$ and $\D^{(-)}\colon\cCat\op\to\cDER$ are actual $2$-functors, given a morphism of derivators $F\colon\D\to\E$ and a functor $u\colon A\to B$, the following diagram commutes up to the invertible natural transformation $\gamma_u\colon u^\ast F^B\toiso F^A u^\ast$,
\begin{equation}\label{eq:pseudo-para}
\vcenter{
\xymatrix{
\D^B\ar[r]^-{F^B}\ar[d]_-{u^\ast}\drtwocell\omit{\cong}&\E^B\ar[d]^-{u^\ast}\\
\D^A\ar[r]_-{F^A}&\E^A,
}
}
\end{equation}
given by the following lemma.

\begin{lem}
Let $F\colon\D\to\E$ be a morphism of prederivators and let $u\colon A\to B$ be a functor. The pseudo-naturality constraints $\gamma_{u\times\id_C},C\in\cCat,$ of $F$ assemble to an invertible modification $\gamma_u\colon u^\ast F^B\to F^Au^\ast$ populating \eqref{eq:pseudo-para}.
\end{lem}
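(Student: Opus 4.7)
The plan is to define the candidate modification componentwise and then verify the modification axiom. For each $C\in\cCat$, both $u^\ast F^B$ and $F^A u^\ast$, viewed as morphisms of prederivators $\D^B\to\E^A$, have $C$-component of the form $(u\times\id_C)^\ast F_{B\times C}$ and $F_{A\times C}(u\times\id_C)^\ast$, respectively. I would take as the $C$-component $(\gamma_u)_C$ the pseudo-naturality constraint $\gamma_{F,u\times\id_C}\colon(u\times\id_C)^\ast F_{B\times C}\to F_{A\times C}(u\times\id_C)^\ast$. Each such component is invertible, since $F$ is a morphism of prederivators.

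It then remains to check the modification axiom: for each $w\colon C\to C'$ in $\cCat$, the square whose horizontal arrows are $(\id_A\times w)^\ast(\gamma_u)_{C'}$ and $(\gamma_u)_C(\id_B\times w)^\ast$ and whose vertical arrows are the pseudo-naturality constraints of $u^\ast F^B$ and $F^A u^\ast$ at $w$ must commute. Since prederivators are strict $2$-functors, restriction morphisms between shifts are strictly $2$-natural, so the constraints of $u^\ast F^B$ and $F^A u^\ast$ at $w$ reduce to the constraints of $F^B$ and $F^A$ at $w$ (namely $\gamma_{F,\id_B\times w}$ and $\gamma_{F,\id_A\times w}$), suitably whiskered with $(u\times\id)^\ast$. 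Moreover, all four corners of the square are identified with restrictions along the common composite $u\times w\colon A\times C\to B\times C'$, using the strict equalities
\[
(u\times\id_{C'})\circ(\id_A\times w)=u\times w=(\id_B\times w)\circ(u\times\id_C)
\]
in $\cCat$.

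The key observation is that these two factorizations of $u\times w$ in $\cCat$ induce, via the pseudo-functor axiom for the pseudo-natural transformation $F$, two descriptions of the single constraint $\gamma_{F,u\times w}$ as a pasting of pseudo-naturality cells of $F$. One factorization produces the top-right composite of the square under consideration, the other produces the left-bottom composite; both are therefore equal to $\gamma_{F,u\times w}$, which gives the required commutativity. The main obstacle is not conceptual but notational: one must carefully bookkeep the whiskerings and the strict equalities of restrictions in order to recognize that each path around the square is literally the composite describing $\gamma_{F,u\times w}$ via the relevant factorization. Once that is in place the statement follows directly from the coherence axiom of $F$.
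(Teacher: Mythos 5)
Your proof is correct and is precisely the ``direct verification'' that the paper leaves to the reader: the components are the constraints $\gamma_{F,u\times\id_C}$, and the modification axiom for $w\colon C\to C'$ reduces, via the strict equality $(u\times\id_{C'})\circ(\id_A\times w)=u\times w=(\id_B\times w)\circ(u\times\id_C)$ and the strictness of restriction, to the composition coherence of the pseudo-natural transformation $F$, both pastings being identified with $\gamma_{F,u\times w}$. Nothing is missing.
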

\begin{proof}
This follows from a direct verification.
\end{proof}

Considering Kan extensions instead of restrictions we obtain the following.

\begin{lem}\label{lem:lax-lkan-prep}
Let $F\colon\D\to\E$ be a morphism of derivators and let $u\colon A\to B$ be a functor. The canonical mates \eqref{eq:mate-mor-cocont} for $u\times\id_C\colon A\times C\to  B\times C,C\in\cCat$, define a natural transformation $u_! F^A\to F^B u_!$,
\begin{equation}\label{eq:lkan-oplax}
\vcenter{
\xymatrix{
\D^A\ar[r]^-{F^A}\ar[d]_-{u_!}\drtwocell\omit&\E^A\ar[d]^-{u_!}\\
\D^B\ar[r]_-{F^B}&\E^B.
}
}
\end{equation}
\end{lem}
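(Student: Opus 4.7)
The plan is to check that the family of canonical mates
\[
\beta_C\colon (u\times\id_C)_!\, F_{A\times C}\to F_{B\times C}\,(u\times\id_C)_!,\qquad C\in\cCat,
\]
assembles into a modification between the two morphisms of prederivators $u_!\circ F^A$ and $F^B\circ u_!$ from $\D^A$ to $\E^B$. Unravelling the definition of a modification, what has to be verified is that for every functor $v\colon C\to D$ in $\cCat$ the hexagon relating $\beta_C$, $\beta_D$, the pseudo-naturality constraints of $F$ at $\id_A\times v$ and $\id_B\times v$, and the canonical mate associated to the naturality square for $u\times -$ applied to $v$, commutes.

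My strategy is to apply the functoriality of canonical mates with respect to pasting to the commutative square
\[
\xymatrix{
A\times C\ar[r]^-{u\times\id_C}\ar[d]_-{\id_A\times v}&B\times C\ar[d]^-{\id_B\times v}\\
A\times D\ar[r]_-{u\times\id_D}&B\times D
}
\]
in $\cCat$. This square is homotopy exact in every derivator, since it is a product of an identity square with a trivially commuting square, so its canonical mate $(\id\times v)^\ast(u\times\id_D)_!\to (u\times\id_C)_!(\id\times v)^\ast$ is invertible. In addition, the pseudo-naturality constraints of $F$ supply structure $2$-isomorphisms along each of the four edges, which fit together by the coherence axioms for pseudo-natural transformations.

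The key step is then to observe that both traversals of the coherence hexagon for $\beta$ at $v$ can be rewritten as the total mate of one and the same pasting of three $2$-cells, namely the canonical mate of the above naturality square together with the two $\gamma$-squares of $F$ along $u\times\id$ and along $\id\times v$. The general identity that the mate of a pasted square equals the paste of the mates of its sub-squares, combined with the coherence identity for $F$ with respect to composition of restriction functors, then delivers the desired commutativity. The hexagon for $\beta$ at an identity $v$ degenerates to a triangle that commutes by the unit coherence of $F$, so only the case of a general $v$ requires the above argument.

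I expect the main obstacle to be bookkeeping rather than conceptual content: once the hexagon is drawn and both composites are recognised as two traversals of a single pasting diagram, the standard mate calculus takes over with no further input. This is why the present statement, like the analogous lemma for restrictions preceding it, can legitimately be dispatched as a direct verification.
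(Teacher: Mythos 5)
Your argument is correct, but it takes a different route from the paper. You verify the modification axiom by hand: you fix $v\colon C\to D$, write out the coherence hexagon for the family $\beta_C$, and show that both traversals are mates of one and the same pasting, using the homotopy exactness of the square $A\times C\to B\times D$ (i.e.\ the fact that $u_!$ commutes with restriction along $\id\times v$), the functoriality of mates with pasting, and the composition coherence of $F$. The paper instead works one level up: it first records (in the unnumbered lemma producing \eqref{eq:pseudo-para}) that the constraints $\gamma_{u\times\id_C}$ assemble into an invertible \emph{modification} $\gamma_u\colon u^\ast F^B\to F^A u^\ast$, and then observes that $(u_!,u^\ast)$ is an adjunction internal to the $2$-category \cDER, so the composite $u_!F^A\xrightarrow{\eta}u_!F^Au^\ast u_!\xrightarrow{\gamma_u^{-1}}u_!u^\ast F^Bu_!\xrightarrow{\varepsilon}F^Bu_!$ is a mate computed inside \cDER and is therefore automatically a natural transformation of derivators; that its components are the mates \eqref{eq:mate-mor-cocont} is then just unravelling. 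The paper's route buys brevity and makes the statement an instance of the general mate calculus in a $2$-category; your route is more explicit at the level of components but must track more $2$-cells than you list (the naturality-square mate occurs once in \D and once in \E, and there are four $\gamma$-constraints of $F$ in play, not three cells in total), so the bookkeeping you anticipate is slightly heavier than advertised. Both arguments rest on the same underlying inputs, namely the pseudo-functoriality of $(-)^{(-)}$ and the coherence of $F$, so the difference is one of packaging rather than substance.
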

\begin{proof}
Since there are an invertible modification \eqref{eq:pseudo-para} and adjunctions of derivators $(u_!,u^\ast)$ for \D and for \E, we can consider the natural transformation
\[
u_! F^A\stackrel{\eta}{\to} u_! F^A u^\ast u_!\stackrel{\gamma_u^{-1}}{\to}u_!u^\ast F_B u_!\stackrel{\varepsilon}{\to} F_B u_!
\]
of derivators. This is an instance of the calculus of mates internally to the $2$-category $\cDER$. Unraveling definitions, this mate has as components the canonical mates \eqref{eq:mate-mor-cocont} associated to $u\times\id_C,C\in\cCat$.
\end{proof}

\begin{lem}\label{lem:lax-lkan}
Let $u\colon A\to B$ be a functor between small categories. There is a lax natural transformation
\[
u_!\colon(-)^A\to(-)^B\colon\cDER\to\cDER
\]
given by the morphisms $u_!\colon\D^A\to\D^B,\D\in\cDER,$ and the natural transformations $\gamma_u\colon u_! F^A\to F^B u_!$ \eqref{eq:lkan-oplax} for all $F\colon\D\to\E$ in $\cDER$.
\end{lem}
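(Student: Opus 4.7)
The plan is to verify the three axioms for a lax natural transformation: normalization on identity $1$-cells of $\cDER$, compatibility with composition of $1$-cells, and compatibility with $2$-cells (modifications). The key observation, already exploited in the proof of \autoref{lem:lax-lkan-prep}, is that the components $\gamma_u\colon u_!F^A\to F^Bu_!$ were produced as an instance of the calculus of mates internally to the $2$-category $\cDER$, using the invertible modifications \eqref{eq:pseudo-para} and the derivator adjunctions $(u_!,u^\ast)$. Consequently, all three axioms reduce to formal properties of the $2$-categorical mate calculus.

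First I would dispose of normalization: for $F=\id_\D$ the pseudo-naturality modification \eqref{eq:pseudo-para} is the identity, so by construction its mate $\gamma_u(\id_\D)\colon u_!\,\id_{\D^A}\to\id_{\D^B}\,u_!$ is the identity, as required. For the composition axiom, given $F\colon\D\to\E$ and $G\colon\E\to\sF$, the pseudo-naturality modification of the composite $G\circ F$ at $u$ is, by coherence of pseudo-natural transformations, the horizontal pasting of the pseudo-naturality modifications of $F$ and of $G$. Applying the functoriality of mates with respect to pasting inside $\cDER$ then yields
\[
\gamma_u(G\circ F)=(G^B\gamma_u(F))\circ(\gamma_u(G)F^A),
\]
which is exactly the composition coherence for a lax natural transformation.

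For compatibility with $2$-cells, let $\alpha\colon F\to G$ be a modification of morphisms $\D\to\E$. Its shifts $\alpha^A\colon F^A\to G^A$ and $\alpha^B\colon F^B\to G^B$ satisfy the obvious compatibility with the pseudo-naturality isomorphisms $\gamma_u(F)$ and $\gamma_u(G)$ of the restriction at $u^\ast$ (this being a defining axiom of modifications). A direct application of \autoref{lem:mates-cont-trafo}, read in the shifted setting and internally to $\cDER$ using the adjunctions $(u_!,u^\ast)$, shows that the diagram
\[
\xymatrix{
u_!F^A\ar[r]^-{\gamma_u(F)}\ar[d]_-{u_!\alpha^A}&F^Bu_!\ar[d]^-{\alpha^Bu_!}\\
u_!G^A\ar[r]_-{\gamma_u(G)}&G^Bu_!
}
\]
commutes, which is the required $2$-cell axiom.

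The main obstacle, if any, is purely bookkeeping: one must be careful to distinguish modifications of morphisms $\D\to\E$ from modifications of morphisms $\D^A\to\E^A$ obtained by shifting, and to verify that the mate calculus invoked in \autoref{lem:mates-cont-trafo} applies internally to $\cDER$ for the adjunctions $(u_!,u^\ast)$ of derivators; no genuinely new argument is required beyond the mate formalism already developed in the preceding sections.
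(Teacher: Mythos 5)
Your proposal is correct and follows essentially the same route as the paper: both verify the three coherence axioms by reducing them to the mate calculus, handling identities and compositions via the functoriality of mates with pasting, and handling compatibility with $2$-cells via \autoref{lem:mates-cont-trafo} together with the identification of components from \autoref{lem:lax-lkan-prep}. You simply spell out a few more of the intermediate steps than the paper does.
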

\begin{proof}
There are three coherence properties to be verified for such a lax natural transformation. The first two ask for a compatibility with respect to composition of morphisms of derivators and with respect to identity morphisms. These two are immediate from the functoriality of mates with pasting. Similarly, as a consequence of \autoref{lem:mates-cont-trafo} and \autoref{lem:lax-lkan-prep}, if $F,G\colon\D\to\E$ are morphisms of derivators and if $\alpha\colon F\to G$ is a natural transformation, then for every $u\colon A\to B$ there is the pasting relation
\[
\xymatrix{
u_!F^A\ar[r]\ar[d]_-\alpha&F^Bu_!\ar[d]^-\alpha&&\D^A\ar[r]^{F}\ar[d]_-{u_!}\drtwocell\omit{}&\E^A\ar[d]^-{u_!}\ar@{}[dr]|{=}&\D^A\ar@{}[]!R(0.5);[r]!L(0.5)|{\Downarrow_{\alpha}}\ar@/^1.5ex/[]!R(0.5);[r]!L(0.5)^{F}\ar@/_1.5ex/[]!R(0.5);[r]!L(0.5)_{G}\ar[d]_-{u_!}\drtwocell\omit{}&\E^A\ar[d]^-{u_!}\\
u_!G^A\ar[r]&G^Bu_!,&&\D^B\ar@{}[]!R(0.5);[r]!L(0.5)|{\Downarrow_{\alpha}}\ar@/^1.5ex/[]!R(0.5);[r]!L(0.5)^{F}\ar@/_1.5ex/[]!R(0.5);[r]!L(0.5)_{G}&\E^B&\D^B\ar[r]_-{G}&\E^B,
}
\]
thereby establishing the remaining coherence property.
\end{proof}

\begin{lem}\label{lem:nat-iso-under}
Let $F,G\colon\D\to\E$ be morphisms of derivators. A natural transformation $\alpha\colon F\to G$ is a natural isomorphism if and only if the underlying natural transformation $\alpha_\bbone\colon F_\bbone\to G_\bbone$ is invertible. 
\end{lem}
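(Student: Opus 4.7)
The plan is to reduce pointwise via axiom (Der2) and then invoke the coherence condition that $\alpha$ satisfies as a modification between pseudo-natural transformations of derivators. The ``only if'' direction is immediate, as each component $\alpha_A$ is then in particular invertible.

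For the converse, I would assume that $\alpha_\bbone\colon F_\bbone\to G_\bbone$ is a natural isomorphism. Given $A\in\cCat$ and $X\in\D(A)$, axiom (Der2) reduces the invertibility of $\alpha_{A,X}\colon F_A(X)\to G_A(X)$ in $\E(A)$ to the invertibility of its restriction $a^\ast\alpha_{A,X}$ in $\E(\bbone)$ for every object $a\in A$, viewed as a functor $a\colon\bbone\to A$.

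Next I would invoke the coherence axiom satisfied by $\alpha$: for every functor $u\colon A\to B$ the square
\[
\xymatrix{
u^\ast F_B\ar[r]^-{u^\ast\alpha_B}\ar[d]_-{\gamma_u^F}& u^\ast G_B\ar[d]^-{\gamma_u^G}\\
F_A u^\ast\ar[r]_-{\alpha_A u^\ast}& G_A u^\ast
}
\]
commutes, where $\gamma_u^F,\gamma_u^G$ are the invertible pseudo-naturality constraints of $F$ and $G$ as in the statement of \autoref{lem:mates-cont-trafo} and the diagrams introduced at the start of \S\ref{sec:ctns-mor}. Specialising to $u=a\colon\bbone\to A$ and evaluating at $X$ exhibits $a^\ast\alpha_{A,X}$ as the composite $(\gamma_a^G)_X^{-1}\circ\alpha_{\bbone,a^\ast X}\circ(\gamma_a^F)_X$. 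The outer two factors are isomorphisms by definition of a morphism of derivators, while the middle factor is an isomorphism by hypothesis. Hence $a^\ast\alpha_{A,X}$ is an isomorphism, and (Der2) yields that $\alpha_{A,X}$ is invertible. Since $A$ and $X$ were arbitrary, $\alpha$ is a natural isomorphism.

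No step presents a significant obstacle; the only delicate point is the bookkeeping of the coherence axiom together with the invertibility of the pseudo-naturality constraints. In particular, neither (Der3) nor (Der4) is used, so the lemma already holds at the level of prederivators satisfying (Der2).
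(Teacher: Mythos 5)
Your proof is correct and follows essentially the same route as the paper: reduce to pointwise invertibility via (Der2) and then use the modification coherence square for $a\colon\bbone\to A$ to write $a^\ast\alpha_A$ as a conjugate of $\alpha_\bbone$ by the invertible pseudo-naturality constraints $\gamma_a^F$ and $\gamma_a^G$. Your closing observation that only (Der2) is needed is accurate but does not change the argument.
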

\begin{proof}
It is easy to check that a natural transformation $\alpha$ of derivators is invertible if and only if all components $\alpha_A,A\in\cCat,$ are invertible, and it remains to show that if $\alpha_\bbone$ is invertible then every $\alpha_A,A\in\cCat,$ is invertible. Since isomorphisms in $\E(A)$ are detected pointwise, it suffices to show that $a^\ast\alpha_A,a\in A,$ is an isomorphism in $\E(\bbone)$. Associated to $a\colon\bbone\to A$ there is the pasting relation
\[
\xymatrixcolsep{3pc}\xymatrixrowsep{3pc}
\xymatrix{
\D(\bbone)\rtwocell_{F_\bbone}^{G_\bbone}{^\alpha_\bbone\;\;}&\E(\bbone)\ar@{}[rd]|{=}&
\D(\bbone)\ar[r]^-{G_\bbone}&\E(\bbone)\\
\D(A)\ar[r]_-{F_A}\ar[u]^-{a^\ast}&\E(A)\ar[u]_-{a^\ast}\ultwocell\omit{\gamma}&
\D(A)\ar[u]^-{a^\ast}\rtwocell_{F_A}^{G_A}{^\alpha_A\;\;}&\E(A),\ar[u]_-{a^\ast}\ultwocell\omit{\gamma}
}
\]
expressing one of the coherence properties of a modification. Since $\gamma_{F,a},\gamma_{G,a},$ and $\alpha_\bbone$ are invertible, the same is true for $a^\ast\alpha_A$.
\end{proof}

\begin{rmk}
\autoref{lem:nat-iso-under} shows that the lax natural transformation constructed in \autoref{lem:lax-lkan} restricts to a pseudo-natural isomorphism on the sub-$2$-category given by all derivators, the morphisms which preserve left Kan extensions along $u$, and all natural transformations of derivators. Alternatively, this also follows from \autoref{prop:shift-cocont}.
\end{rmk}

This remark is of particular interest in the context of stable derivators and exact morphisms; see \S\ref{sec:exact-htpy-finite}.

\section{Coproduct preserving morphisms}
\label{sec:coproducts}

In this section we revisit the existence of (co)products in derivators \cite[Prop.~1.7]{groth:ptstab} and show that homotopy (co)products and categorical (co)products agree in a certain precise sense. Similarly, a morphism of derivators preserves homotopy (co)products if and only if it preserves categorical (co)products.

\begin{prop}\label{prop:coprod-match}
Let \D be a derivator and let $S$ be a discrete category. A cocone $X\in\D(S^\rhd)$ is a coproduct cocone if and only if $\ndia_{S^\rhd}(X)\colon S^\rhd\to \D(\bbone)$ is a coproduct cocone, i.e., it exhibits $X_\infty$ as the coproduct of $X_s\in\D(\bbone),s\in S$. 
\end{prop}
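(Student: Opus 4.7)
The plan is to combine \autoref{prop:cone} with axiom (Der1). By \autoref{prop:cone}, the cocone $X\in\D(S^\rhd)$ is colimiting if and only if the canonical mate
\[
\beta_X\colon(\pi_S)_!i_S^\ast X=\colim_S i_S^\ast X\to X_\infty
\]
associated with the left square of \eqref{eq:htpy-exact-cocone-2} is an isomorphism in $\D(\bbone)$.

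Since $S$ is discrete, axiom (Der1) identifies $\D(S)$ with $\prod_{s\in S}\D(\bbone)$ in such a way that $\pi_S^\ast\colon\D(\bbone)\to\D(S)$ corresponds to the diagonal. Its left adjoint $(\pi_S)_!$ is therefore the categorical coproduct: for $Y\in\D(S)$, the object $(\pi_S)_!Y$ is canonically $\coprod_s Y_s$ in $\D(\bbone)$, with structure maps $\iota_s=s^\ast\eta_Y\colon Y_s\to(\pi_S)_!Y$ (compare \cite[Prop.~1.7]{groth:ptstab}). Specializing to $Y=i_S^\ast X$ identifies $\colim_S i_S^\ast X$ with the categorical coproduct $\coprod_s X_s$ in $\D(\bbone)$.

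It then remains to verify that, under this identification, $\beta_X$ agrees with the universal map $\coprod_s X_s\to X_\infty$ determined by the underlying cocone maps $X_s\to X_\infty$ of $\ndia_{S^\rhd}(X)$. By the universal property of the coproduct it suffices to compute the composites $\beta_X\circ\iota_s$ for $s\in S$. Writing $\mu\colon i_S\Rightarrow \infty\circ\pi_S$ for the 2-cell of the square (so that $\mu^\ast_X\colon i_S^\ast X\to\pi_S^\ast\infty^\ast X$ is precisely the coherent cocone structure of $X$), unraveling $\beta_X=\varepsilon_{\infty^\ast X}\circ(\pi_S)_!\mu^\ast_X$ and applying naturality of $\eta$ to $\mu^\ast_X$ followed by the triangular identity $\pi_S^\ast\varepsilon\circ\eta\pi_S^\ast=\id$ for the adjunction $((\pi_S)_!,\pi_S^\ast)$, this composite collapses to $s^\ast\mu^\ast_X\colon X_s\to X_\infty$, i.e., to the underlying cocone morphism indexed by the arrow $s\to\infty$ in $S^\rhd$.

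Combining these steps, $\beta_X$ is an isomorphism if and only if the comparison $\coprod_s X_s\to X_\infty$ is an isomorphism, which is by definition equivalent to $\ndia_{S^\rhd}(X)$ being a coproduct cocone. The only non-formal point is the identification in the third paragraph; this is the main obstacle in a technical sense, though it is purely a matter of bookkeeping with unit, counit, and the naturality of $\eta$, and could alternatively be packaged by appealing to \autoref{lem:units-vs-morphisms} applied to $\pi_S$ together with the description of structure maps from \cite[Prop.~1.7]{groth:ptstab}.
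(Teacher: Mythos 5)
Your proof is correct and follows essentially the same route as the paper: both reduce to \autoref{prop:cone} and then identify the canonical mate $\colim_S i_S^\ast X\to X_\infty$ with the universal comparison map $\coprod_s X_s\to X_\infty$ out of the categorical coproduct supplied by (Der1) and \cite[Prop.~1.7]{groth:ptstab}. The only difference is presentational — the paper packages the identification as a pasting diagram whose total composite is the fold-map comparison, whereas you verify it componentwise on the coproduct inclusions via naturality of $\eta$ and a triangle identity; the underlying computation is the same.
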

\begin{proof}
Let us consider the pasting diagram \autoref{fig:coprod} in which the canonical isomorphism in the top triangle follows from the construction of coproducts in $\D(\bbone)$. The bottom triangle is the adjunction counit, the top square commutes by the strictness of underlying diagram morphisms, and the bottom square is populated by the natural transformation induced by \eqref{eq:htpy-exact-cocone} in the special case of $A=S$. Note that $\ndia_S\pi_S^\ast=\Delta_S$ and the vertical pasting of the triangles evaluated at $y\in\D(\bbone)$ is hence the fold map 
\[
\nabla\colon\colim_S\Delta_S(y)=\coprod_{s\in S} y\to y.
\]
An evaluation of the vertical pasting of the squares at $X\in\D(S^\rhd)$ is the natural transformation $i_S^\ast\ndia_{S^\rhd}(X)\to\Delta_S(X_\infty)$ induced by the structure maps of $X$. Thus, the total pasting applied to $X$ yields the map $\coprod_{s\in S}X_s\to X_\infty$ induced from the underlying diagram $\ndia_{S^\rhd}X$, i.e., the map detecting if $\ndia_{S^\rhd}X$ is a coproduct cocone in the usual sense. Since the upper two natural transformations are invertible, this is the case if and only if the pasting of the lower two natural transformations is an isomorphism on $X$. Note that this latter pasting is the canonical mate associated to the square on the left in \eqref{eq:htpy-exact-cocone-2} in the special case of $A=S$, which by \autoref{prop:cone} is an isomorphism if and only if $X\in\D(S^\rhd)$ is a coproduct cocone.
\end{proof}

\begin{defn}
Let $S$ be a discrete category. A morphism of derivators \textbf{preserves $S$-fold coproducts} if it preserves left Kan extensions along $\nabla_S\colon S\to\bbone$. A morphism of derivators \textbf{preserves initial objects} if it preserves left Kan extensions along $\emptyset\colon\emptyset\to\bbone$.
\end{defn}
 
Dually, we speak about morphisms preserving products or terminal objects. These two notions reduce to the usual categorical notions as we show next.

\begin{prop}\label{prop:coprod-pres}
Let $F\colon\D\to\E$ be a morphism of derivators and let $S$ be a (possibly empty) discrete category. The following are equivalent.
\begin{enumerate}
\item The morphism $F$ preserves $S$-fold coproducts.
\item The morphism $F$ preserves left Kan extensions along $i_S\colon S\to S^\rhd.$
\item The functor $F_{S^\rhd}\colon\D(S^\rhd)\to\E(S^\rhd)$ sends coproduct cocones to coproduct cocones.
\item The underlying functor $F_\bbone\colon\D(\bbone)\to\E(\bbone)$ preserves $S$-fold coproducts.
\item Every functor $F_A\colon\D(A)\to\E(A),A\in\cCat,$ preserves $S$-fold coproducts.
\end{enumerate}
\end{prop}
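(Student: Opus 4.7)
The plan is to close a small cycle among the five statements, leaning heavily on the preceding material. The equivalence of (i), (ii), and (iii) will follow at once from \autoref{prop:mor-colim-vs-cocone} applied with $A = S$, since for a discrete $S$ the notions of colimit of shape $S$ and $S$-fold coproduct coincide. It then suffices to link (iii) with (iv) and to extend to (v).

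The key observation will be a natural isomorphism
\[
\ndia_{S^\rhd}(F_{S^\rhd}(X)) \cong F_\bbone \circ \ndia_{S^\rhd}(X)
\]
of functors $S^\rhd \to \E(\bbone)$, assembled from the pseudo-naturality constraints $a^\ast F_{S^\rhd} \cong F_\bbone a^\ast$ at each $a \in S^\rhd$. Combined with \autoref{prop:coprod-match} applied to both \D and \E, this will reduce the question of whether $F_{S^\rhd}$ sends coproduct cocones to coproduct cocones to the question of whether $F_\bbone$ preserves the underlying coproducts. The implication (iv) $\Rightarrow$ (iii) is then a direct reading. For the converse (iii) $\Rightarrow$ (iv) I will realize an arbitrary family $(y_s)_{s\in S}$ in $\D(\bbone)$ as the underlying diagram of some $Y \in \D(S)$ (using (Der1) and discreteness of $S$) and set $X = (i_S)_! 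Y$; this is a coproduct cocone whose apex is canonically the coproduct of the $y_s$ (fully faithfulness of $i_S$ plus \autoref{prop:coprod-match}), so applying (iii) and the identification above extracts the required preservation.

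Finally, (v) $\Rightarrow$ (iv) is the special case $A = \bbone$. The implication (i) $\Rightarrow$ (v) will follow from \autoref{cor:shift-cocont}: if $F$ preserves left Kan extensions along $\pi_S$, then so does $F^A$ for every $A \in \cCat$, so (i) holds for $F^A$ in place of $F$, and applying the already-established implication (i) $\Rightarrow$ (iv) to $F^A$ yields that $(F^A)_\bbone = F_A$ preserves $S$-fold coproducts, as required.

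The main technical point will be verifying the identification $\ndia_{S^\rhd}(F_{S^\rhd}(X)) \cong F_\bbone \circ \ndia_{S^\rhd}(X)$ carefully enough that, under it, the comparison map $\coprod_s F(X_s) \to F(X_\infty)$ extracted from the underlying diagram of $F_{S^\rhd}(X)$ matches $F$ applied to the comparison map for $X$; everything else is bookkeeping around (Der1), fully faithfulness of $i_S$, and the cited propositions.
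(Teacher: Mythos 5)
Your proposal is correct, and it agrees with the paper on most of the skeleton: the equivalence of (i), (ii), (iii) as a special case of \autoref{prop:mor-colim-vs-cocone}, the trivial implication (v)$\Rightarrow$(iv), and the deduction of (v) from (i) via \autoref{cor:shift-cocont} together with the (i)$\Leftrightarrow$(iv) equivalence. Where you diverge is in how (iv) is tied into the cycle. The paper proves (i)$\Leftrightarrow$(iv) directly by a pasting-of-mates computation: it compares the canonical mate for left Kan extension along $\nabla_S\colon S\to\bbone$ with the categorical coproduct comparison in $\E(\bbone)$, using that the squares involving the equivalences $\ndia_S\colon\D(S)\to\D(\bbone)^S$ (from (Der1)) have invertible mates and that the two pastings agree up to the pseudo-naturality constraint of the underlying diagram morphism. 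You instead prove (iii)$\Leftrightarrow$(iv), routing everything through \autoref{prop:coprod-match} applied in both \D and \E together with the identification $\ndia_{S^\rhd}(F_{S^\rhd}X)\cong F_\bbone\circ\ndia_{S^\rhd}(X)$ assembled from the constraints $\gamma_a$, $a\in S^\rhd$. This is a legitimate alternative: \autoref{prop:coprod-match} already encapsulates the mate computation of Figure~\ref{fig:coprod}, so your argument trades the paper's explicit pasting diagram for an object-level argument with coproduct cocones. The price, which you correctly identify as the main technical point, is that you must check that under your identification the comparison map $\coprod_s F(X_s)\to F(X_\infty)$ extracted from $\ndia_{S^\rhd}(F_{S^\rhd}X)$ agrees with the composite of the canonical map $\coprod_s F(X_s)\to F(\coprod_s X_s)$ with $F$ applied to the comparison map for $X$; this follows from the coherence of the pseudo-naturality constraints but does need to be said. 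Your surjectivity step in (iii)$\Rightarrow$(iv) (realizing an arbitrary family as $\ndia_S(Y)$ via (Der1) and forming $(i_S)_!Y$) is sound and has no counterpart in the paper, which avoids it by never arguing objectwise.
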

\begin{proof}
The equivalence of the first three statements is simply a special case of \autoref{prop:mor-colim-vs-cocone}. We next show that the first and fourth statement are equivalent. Let $F\colon\D\to\E$ be a morphism of derivators and let us consider the following two pastings
\begin{equation}
  \vcenter{\xymatrix{
      \D(S)\ar[r]^{F_S}&
      \E(S)\ar[r]^-{\ndia_S}_-\simeq \dltwocell\omit{\cong}&
      \E(\bbone)^S\dltwocell\omit{\id}\\
      \D(\bbone)\ar[r]_-{F_\bbone}\ar[u]^-{\nabla_S^\ast} &
      \E(\bbone)\ar[r]_-\id\ar[u]^-{\nabla_S^\ast} &
      \E(\bbone),\ar[u]^-{\Delta_S}
    }}
  \qquad
  \vcenter{\xymatrix{
      \D(S)\ar[r]^-{\ndia_S}_-\simeq&
      \D(\bbone)^S\ar[r]^-{F_\bbone^S}\dltwocell\omit{\id} &
      \E(\bbone)^S\dltwocell\omit{\id}\\
      \D(\bbone)\ar[r]_-\id\ar[u]^{\nabla_S^\ast}  &
      \D(\bbone)\ar[r]_-{F_{\bbone}}\ar[u]^{\Delta_S} &
      \E(\bbone),\ar[u]_{\Delta_S}
    }}
\end{equation}
in which the two inner squares commute. Note that these inner squares are horizontally constant (the horizontal functors are equivalences and the squares are populated by natural isomorphisms) and they hence have invertible mates. These two pastings agree up to a vertical pasting by the pseudo-naturality constraint of the underlying diagram morphism. Putting this together, $F$ preserves $S$-fold coproducts if and only if the canonical mate of the pasting on the left is an isomorphism if and only if the canonical mate of the pasting on the right is an isomorphism if and only if the underlying functor $F_\bbone\colon\D(\bbone)\to\E(\bbone)$ preserves $S$-fold coproducts, thereby establishing the equivalence of (i) and (iv). Since (v) clearly implies (iv), it remains to show that (i) implies (v). But, using the equivalence of (i) and (iv), this is an immediate consequence of \autoref{cor:shift-cocont}.
\end{proof}

\autoref{prop:coprod-pres} applies, in particular, in the case of $S=\emptyset$, thereby yielding statements about morphisms of derivators which preserve initial objects. As an immediate consequence we obtain the following result.

\begin{figure}
\centering
\[\xymatrix{
\D(\bbone)^{S^\rhd}\ar[r]^-{i_S^\ast}&\D(\bbone)^S\ar@/^1.0pc/[dr]^-{\colim_S}&\\
\D(S^\rhd)\ar[r]^-{i_S^\ast}\ar[u]^-{\ndia_{S^\rhd}}\urtwocell\omit{\id}&\D(S)\ar[u]^-\simeq_-{\ndia_S}\ar[r]^-{(\pi_S)_!}\urtwocell\omit{\cong}&\D(\bbone)\\
\D(S^\rhd)\ar[r]_-{\infty^\ast}\ar[u]^-\id\urtwocell\omit{}&\D(\bbone)\ar@/_1.0pc/[ur]_-\id\ar[u]_-{\pi_S^\ast}\urtwocell\omit{\varepsilon}&
}
\]
\caption{Matching of categorical coproducts and homotopy coproducts.}
\label{fig:coprod}
\end{figure}

\begin{cor}\label{cor:coprod-pres}
A morphism of derivators which preserves binary coproducts also preserves non-empty, finite coproducts. 
\end{cor}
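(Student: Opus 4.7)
The plan is to reduce the statement to a familiar fact in ordinary category theory by means of \autoref{prop:coprod-pres}, and then argue by induction on the cardinality of the indexing discrete category. The key observation is that the derivator-theoretic notion of coproduct preservation coincides pointwise with the classical notion, so the entire content of the corollary is already packaged in that proposition.

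First, I would invoke the equivalence of clauses (i) and (iv) in \autoref{prop:coprod-pres}: a morphism of derivators $F\colon\D\to\E$ preserves $S$-fold coproducts if and only if the underlying functor $F_\bbone\colon\D(\bbone)\to\E(\bbone)$ preserves ordinary $S$-fold coproducts. Applying this equivalence both to $S=\bbtwo$ (in order to translate the hypothesis) and to an arbitrary non-empty finite discrete $S$ (in order to translate the desired conclusion), the corollary reduces to the classical statement that a functor between ordinary categories which preserves binary coproducts preserves all non-empty finite coproducts.

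To establish this classical statement, I would induct on $n=|S|\geq 1$. The base case $n=1$ is immediate, since a $1$-fold coproduct is canonically the summand itself and is therefore preserved by any functor. For the inductive step, I would use the associativity isomorphism $X_1\sqcup\cdots\sqcup X_{n+1}\cong(X_1\sqcup\cdots\sqcup X_n)\sqcup X_{n+1}$; combining the inductive hypothesis (applied to the inner coproduct) with the assumption that binary coproducts are preserved then yields the sought-after isomorphism between $F_\bbone(\coprod_i X_i)$ and $\coprod_i F_\bbone(X_i)$.

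There is essentially no hard step: the real work has been done in \autoref{prop:coprod-pres}, and the ensuing induction is entirely routine. The only mildly delicate point worth flagging is that the argument genuinely requires non-emptiness of $S$: without binary coproducts implying initial objects, the inductive descent cannot be started, which is consistent with the fact that a binary-coproduct-preserving functor need not preserve initial objects.
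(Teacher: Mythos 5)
Your proof is correct and follows the same route as the paper: the paper's own proof is exactly ``two applications of \autoref{prop:coprod-pres}'' (translating both the hypothesis for $S=\bbtwo$ and the conclusion for general finite non-empty $S$ to the underlying functor $F_\bbone$) combined with the corresponding classical fact, which you simply spell out via the routine induction on $|S|$ using associativity of binary coproducts.
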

\begin{proof}
This is immediate from the corresponding result in ordinary category theory and two applications of \autoref{prop:coprod-pres}.
\end{proof}

\begin{lem}\label{lem:initial-cosieve}
A morphism of derivators preserves initial objects if and only if it preserves left Kan extensions along cosieves.
\end{lem}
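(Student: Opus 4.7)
The ``if'' direction is immediate: the unique functor $\emptyset\to\bbone$ is (vacuously) a cosieve, so preservation of left Kan extensions along cosieves specializes to preservation of initial objects. I would simply record this in one line.

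For the ``only if'' direction, my plan is to invoke \autoref{rmk:cocont-colimits}, which reduces preservation of left Kan extensions along $u\colon A\to B$ to preservation of colimits of shape $(u/b)$ for every $b\in B$. The defining feature of a cosieve is the crucial input: if $b\in B-u(A)$ and there existed a morphism $u(a)\to b$ in $B$, the cosieve condition would force $b\in u(A)$, a contradiction; hence $(u/b)=\emptyset$ whenever $b\notin u(A)$.

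I then split into two cases. For $b=u(a)\in u(A)$, fully faithfulness of $u$ makes $(a,\id_{u(a)})$ a terminal object of $(u/b)$, so by the example following \autoref{lem:final} every morphism of derivators preserves colimits of shape $(u/b)$. For $b\in B-u(A)$, preservation of colimits of shape $\emptyset$ is precisely preservation of initial objects, which holds by hypothesis. Combining both cases with \autoref{rmk:cocont-colimits} yields the conclusion.

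I do not anticipate any substantial obstacle; the proof is a short, diagram-chase-free assembly of previously established tools. The only delicate point worth flagging is the cosieve convention giving $(u/b)=\emptyset$ for $b\notin u(A)$, which is precisely the mechanism by which ``preservation of initial objects'' suffices to control the pointwise Kan extension formula away from the image of $u$.
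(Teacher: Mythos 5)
Your proof is correct, but it takes a different route from the one in the paper. The paper's argument runs through \autoref{lem:ff-kan-ess-im}: since a cosieve $u$ is fully faithful, $F$ preserves left Kan extensions along $u$ precisely when $Fu_!(X)$ lands in the essential image of $u_!$, and by \cite[Prop.~1.23]{groth:ptstab} that essential image consists of the diagrams restricting to initial objects on $B-u(A)$ --- a condition visibly preserved by any $F$ preserving initial objects. You instead reduce via \autoref{rmk:cocont-colimits} to colimits of shape $(u/b)$ and observe that these slices are either empty (for $b\notin u(A)$, by the cosieve condition, where preservation of initial objects is exactly what is needed) or have a terminal object (for $b\in u(A)$, by fully faithfulness, where preservation is automatic by the example following \autoref{lem:final}). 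Both arguments are short assemblies of previously established tools and both correctly locate the content at the objects outside the image of $u$; yours stays closer to the slice-category machinery already used in \autoref{lem:he-ex-ff} and avoids invoking the external characterization of cosieve extensions as ``extension by initial objects,'' at the cost of leaning on \autoref{rmk:cocont-colimits}, which is itself a pointer to the proof of \cite[Prop.~2.3]{groth:ptstab}. Your handling of the ``if'' direction via the empty cosieve $\emptyset\to\bbone$ coincides with the paper's.
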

\begin{proof}
If a morphism preserves initial objects, then it also preserves left Kan extensions along cosieves by \cite[Prop.~1.23]{groth:ptstab} and \autoref{lem:ff-kan-ess-im}. For the converse direction it suffices to consider the empty cosieve $\emptyset\colon\emptyset\to\bbone$.
\end{proof}

In order to apply \autoref{cor:coprod-pres} it is convenient to have a different description of binary coproducts in derivators. Recall from ordinary category theory that coproducts of two objects $x,y\in\cC$ in a category with finite coproducts can equivalently be described by pushout diagrams
\[
\xymatrix{
\emptyset\ar[r]\ar[d]&x\ar[d]\\
y\ar[r]&x\sqcup y\pushoutcorner
}
\]
in which $\emptyset$ is an initial object. To extend this to derivators let us consider the functor
\begin{equation}\label{eq:copr-I}
k=((1,0),(0,1))\colon\bbone\sqcup\bbone\to\square=[1]^2
\end{equation}
which factors as compositions of fully faithful functors
\begin{equation}\label{eq:copr-II}
\bbone\sqcup\bbone\stackrel{i}{\to}\ulcorner\stackrel{i_\ulcorner}{\to}\square\qquad\text{and}\qquad\bbone\sqcup\bbone\stackrel{j}{\to}\lrcorner\stackrel{i_\lrcorner}{\to}\square.
\end{equation}
Here, $i_\ulcorner\colon\ulcorner\to\square$ and $i_\lrcorner\colon\lrcorner\to\square$ denote the inclusions of the full subcategories obtained by removing the final object $(1,1)$ and initial object $(0,0)$, respectively. Our naming convention for the objects in $\square$ is 
\[
\xymatrix{
(0,0)\ar[r]\ar[d]&(1,0)\ar[d]\\
(0,1)\ar[r]&(1,1).
}
\]

Given a derivator \D, as a special case of \autoref{defn:colim-cocone}, a diagram $X\in\D(\lrcorner)$ is a coproduct cocone if it lies in the essential image of $j_!\colon\D(\bbone\sqcup\bbone)\to\D(\lrcorner)$.

\begin{notn}\label{notn:copr-square}
Let \D be a derivator. We denote by $\D(\square)^\mathrm{copr}\subseteq\D(\square)$ the full subcategory spanned by the cocartesian squares $X$ such that $X_{(0,0)}\cong\emptyset$, and we refer to any object in $\D(\square)^\mathrm{copr}$ as a \textbf{coproduct square}.
\end{notn}

The justification for this terminology is provided by the following lemma.

\begin{lem}\label{lem:copr-square}
For every derivator \D the left Kan extension along $k$ \eqref{eq:copr-I} induces an equivalence $\D(\bbone\sqcup\bbone)\simeq\D(\square)^{\mathrm{copr}}$. Moreover, a square $X$ lies in $\D(\square)^{\mathrm{copr}}$ if and only if $X_{(0,0)}\cong\emptyset$ and if the restriction $i_\lrcorner^\ast X\in\D(\lrcorner)$ is a coproduct cocone.
\end{lem}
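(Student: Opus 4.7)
My plan is to exploit the two factorizations $k = i_\ulcorner \circ i = i_\lrcorner \circ j$ from \eqref{eq:copr-II}, each exhibiting $k$ as a composition of fully faithful functors. The whole argument reduces to a single recurring observation about left Kan extension along a fully faithful inclusion that adjoins an initial object.

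First I would record the building block. If $u\colon A \to B$ is a fully faithful inclusion adjoining a single object $b_0 \in B - u(A)$ admitting no non-identity morphisms in from $u(A)$, then $u_!$ is fully faithful (\cite[Lem.~1.21]{groth:ptstab}), and by (Der4) the counit $u_!u^\ast Y \to Y$ evaluated at $b_0$ is the colimit over the empty slice $(u/b_0)$, which is the canonical map from the initial object. Hence the essential image of $u_!$ is precisely $\{Y \in \D(B) : Y_{b_0} \cong \emptyset\}$. This applies verbatim to both $i\colon \bbone \sqcup \bbone \to \ulcorner$ and $i_\lrcorner\colon \lrcorner \to \square$, since in each case the adjoined object is the initial object $(0,0)$.

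For the equivalence $\D(\bbone \sqcup \bbone) \simeq \D(\square)^{\mathrm{copr}}$, I combine the building block applied to $i$ with the defining fact that cocartesian squares are the essential image of the fully faithful functor $(i_\ulcorner)_!\colon \D(\ulcorner) \to \D(\square)$. Since $(i_\ulcorner)_!$ is fully faithful and preserves the $(0,0)$-value (as $(0,0)\in\ulcorner$), the composite $k_! = (i_\ulcorner)_! \circ i_!$ is fully faithful and its essential image is precisely the collection of cocartesian squares $X$ with $X_{(0,0)} \cong \emptyset$, that is, $\D(\square)^{\mathrm{copr}}$.

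For the second characterization I switch to the factorization $k_! = (i_\lrcorner)_! \circ j_!$. Given $Z \in \D(\bbone \sqcup \bbone)$, the full faithfulness of $(i_\lrcorner)_!$ gives $i_\lrcorner^\ast k_!(Z) \cong j_!(Z)$, a coproduct cocone, while the building block applied to $i_\lrcorner$ forces $k_!(Z)_{(0,0)} \cong \emptyset$; this handles the forward direction. Conversely, suppose $X \in \D(\square)$ satisfies $X_{(0,0)} \cong \emptyset$ and $i_\lrcorner^\ast X \cong j_!(Z)$ for some $Z \in \D(\bbone \sqcup \bbone)$. The building block applied to $i_\lrcorner$ places $X$ in the essential image of $(i_\lrcorner)_!$, so $X \cong (i_\lrcorner)_! i_\lrcorner^\ast X \cong (i_\lrcorner)_! j_!(Z) = k_!(Z)$, which by the first part of the lemma lies in $\D(\square)^{\mathrm{copr}}$. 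The only bookkeeping is to verify in each invocation of the building block that the adjoined object $(0,0)$ is genuinely initial in the enlarged category, which is immediate from the description of $\ulcorner$ and $\square$; no deeper obstacle intervenes.
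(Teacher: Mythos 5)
Your proof is correct and follows essentially the same route as the paper: both use the two factorizations $k=i_\ulcorner\circ i=i_\lrcorner\circ j$ into fully faithful functors and identify the essential image of $k_!$ in the two corresponding ways. The only cosmetic difference is that the paper cites the general fact that left Kan extension along a cosieve is extension by initial objects (\cite[Prop.~1.23]{groth:ptstab}), whereas you re-derive this for the two inclusions at hand directly from (Der4) via the empty slice categories.
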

\begin{proof}
The functor \eqref{eq:copr-I} is fully faithful hence so is $k_!\colon\D(\bbone\sqcup\bbone)\to\D(\square)$. Since $k$ factors as indicated in \eqref{eq:copr-II}, there are by \autoref{lem:lkan-commute-naive} natural isomorphisms
\[
k_!\cong (i_\ulcorner)_!i_!\cong (i_\lrcorner)_! j_!.
\]
All of these functors are fully faithful and these factorizations yield two different descriptions of the essential image of $k_!$. Using the natural isomorphism $k_!\cong(i_\ulcorner)_!i_!$ we see that $X\in\D(\square)$ lies in the essential image of $k_!$ if and only if $X$ is cocartesian and $X_{(0,0)}\cong\emptyset$, i.e., if and only if $X$ is a coproduct square. In fact, since $i$ is a cosieve this follows from \cite[Prop.~1.23]{groth:ptstab}. Similarly, using the isomorphism $k_!\cong (i_\lrcorner)_!j_!$ and the fact that $i_\lrcorner$ is a cosieve it follows from the same proposition that the essential image of $k_!$ consists precisely of those $X$ with $X_{(0,0)}\cong \emptyset$ and such that $i_\lrcorner^\ast X$ is a coproduct cocone.
\end{proof}

This seemingly picky discussion allows us in \S\ref{sec:exact-htpy-finite} to show that right exact morphisms preserve finite coproducts.

\section{Pointed morphisms}
\label{sec:pointed}

In this section we define pointed morphisms of pointed derivators. It is is shown that this is a natural class of morphisms which allows for canonical comparison maps expressing a lax or an oplax compatibility with suspensions, loops, cofibers, fibers, and similar constructions.

\begin{lem}\label{lem:mor-pointed-zero}
A morphism of pointed derivators preserves initial objects if and only if it preserves terminal objects.
\end{lem}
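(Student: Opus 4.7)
The plan is to reduce both preservation conditions to statements about the underlying functor $F_\bbone\colon\D(\bbone)\to\E(\bbone)$ and then invoke the trivial categorical fact that in a pointed category the initial and terminal objects coincide. By \autoref{prop:coprod-pres} applied in the special case $S=\emptyset$, a morphism $F\colon\D\to\E$ preserves initial objects (i.e., left Kan extensions along $\emptyset\colon\emptyset\to\bbone$) if and only if the underlying functor $F_\bbone\colon\D(\bbone)\to\E(\bbone)$ preserves initial objects in the ordinary categorical sense. Applying the duality principle \autoref{lem:dual-prin-mor} to the dual of \autoref{prop:coprod-pres}, we obtain the analogous equivalence for right Kan extensions: $F$ preserves terminal objects if and only if $F_\bbone$ preserves terminal objects as a functor of ordinary categories.

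Since $\D$ and $\E$ are assumed to be pointed derivators, the categories $\D(\bbone)$ and $\E(\bbone)$ are pointed by definition; in each of them the initial and terminal objects coincide to form a zero object. Hence $F_\bbone$ sends the zero object of $\D(\bbone)$ to the zero object of $\E(\bbone)$ if and only if it sends some (equivalently, every) initial object to an initial object, if and only if it sends some (equivalently, every) terminal object to a terminal object. Combining this with the two equivalences of the previous paragraph yields the lemma.

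I do not anticipate any real obstacle: the entire content of the proof is the trivial observation that initial and terminal objects coincide in a pointed category, packaged through \autoref{prop:coprod-pres} and its dual in order to move between the ``derivator'' meaning of preservation (preservation of appropriate Kan extensions) and its ordinary categorical counterpart on the underlying level.
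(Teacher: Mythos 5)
Your proof is correct and takes exactly the route the paper intends: the paper's own proof is the one-line ``This is immediate from \autoref{prop:coprod-pres},'' and your argument is precisely the unpacking of that reduction via the case $S=\emptyset$, its dual, and the coincidence of initial and terminal objects in the pointed categories $\D(\bbone)$ and $\E(\bbone)$.
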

\begin{proof}
This is immediate from \autoref{prop:coprod-pres}.
\end{proof}

\begin{cor}\label{cor:initial-cosieve}
A morphism of pointed derivators preserves zero objects if and only if it preserves left extensions by zero if and only if it preserves right extensions by zero.
\end{cor}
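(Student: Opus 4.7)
The plan is to combine the already-established \autoref{lem:mor-pointed-zero} and \autoref{lem:initial-cosieve} with the dual of the latter. First I would make explicit what each of the three conditions means in terms of preservation of Kan extensions. A zero object in a pointed derivator is an object that is simultaneously initial and terminal, so a morphism preserves zero objects if and only if it preserves both initial and terminal objects. A \emph{left extension by zero} is a left Kan extension along a cosieve (in a pointed derivator, by \cite[Prop.~1.23]{groth:ptstab} such a left Kan extension pointwise takes the value of the initial, i.e., zero, object outside the image), and dually a \emph{right extension by zero} is a right Kan extension along a sieve.

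With these reformulations in place, the three-way equivalence is immediate. By \autoref{lem:mor-pointed-zero}, preservation of initial objects is equivalent to preservation of terminal objects, hence equivalent to preservation of zero objects. By \autoref{lem:initial-cosieve}, preservation of initial objects is equivalent to preservation of left Kan extensions along cosieves, i.e., of left extensions by zero. Dually (using \autoref{lem:dual-prin-mor} to pass to the opposite morphism $F\op\colon\D\op\to\E\op$, under which sieves in $\D$ correspond to cosieves in $\D\op$ and terminal objects to initial objects), preservation of terminal objects is equivalent to preservation of right Kan extensions along sieves, i.e., of right extensions by zero. Chaining these three equivalences yields the corollary.

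There is no real obstacle; the only point requiring a small amount of care is the invocation of duality to obtain the dual of \autoref{lem:initial-cosieve}, but this is a direct application of \autoref{lem:dual-prin-mor} since the defining property of a pointed derivator is self-dual and a functor is a sieve precisely when its opposite is a cosieve.
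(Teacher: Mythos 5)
Your proposal is correct and follows essentially the same route as the paper: the paper's proof is just the one-line ``immediate from \autoref{lem:initial-cosieve}'', with \autoref{lem:mor-pointed-zero} and the evident dualization left implicit, and your write-up simply spells out those implicit steps. The identification of left/right extensions by zero with left/right Kan extensions along cosieves/sieves and the use of \autoref{lem:dual-prin-mor} for the dual half are exactly what the paper intends.
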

\begin{proof}
This is immediate from \autoref{lem:initial-cosieve}.
\end{proof}

\begin{rmk}\label{rmk:pointed}
To put this in words, a morphism between pointed derivators which preserves initial objects (`a construction on the left') preserves right Kan extensions along sieves (`a construction on the right'). More interestingly, this phenomenon reappears in the stable context, and we get back to this in \S\ref{sec:exact-htpy-finite} and~\cite{groth:char}.
\end{rmk}

\begin{defn}\label{defn:pointed-mor}
A morphism of pointed derivators is \textbf{pointed} if it preserves zero objects.
\end{defn}

\begin{egs}\label{egs:pointed}
Equivalences, left adjoint, and right adjoint morphisms of pointed derivators are pointed. The class of pointed morphisms is closed under compositions, shifting, products, and the passage to opposite or equivalent morphisms.
\end{egs}

\begin{rmk}
Despite being very simple to prove, \autoref{cor:initial-cosieve} is quite useful. In the framework of derivators, one often combines three types of constructions, namely restriction morphisms, left Kan extension morphisms, and right Kan extension morphisms. In the context of a pointed morphism, the above corollary frequently allows us to construct canonical comparison maps between suitable combinations of such constructions.
\begin{enumerate}
\item Pointed morphisms are compatible with restrictions up to specified isomorphisms by pseudo-naturality.
\item Pointed morphisms preserve left and right extensions by zero up to canonical isomorphisms (\autoref{cor:initial-cosieve}).
\item Pointed morphisms admit canonical, possibly non-invertible comparison maps for more general Kan extensions (see \eqref{eq:mate-mor-cocont} and \eqref{eq:mate-mor-cont}). 
\end{enumerate}
Being able to pass to inverses of the first two kinds of maps often allows us to replace certain zigzags of morphisms by direct morphisms.\end{rmk}

 We illustrate this by three closely related examples.

\begin{con}\label{con:pointed-cof-fib}
Let $F\colon\D\to\E$ be a pointed morphism of pointed derivators. We construct canonical, possibly non-invertible natural transformations populating the following squares,
\[
\xymatrix{
\D([1])\ar[r]^-\cof\ar[d]_-F&\D([1])\ar[d]^-F&&
\D([1])\ar[r]^-\fib\ar[d]_-F\drtwocell\omit{}&\D([1])\ar[d]^-F\\
\E([1])\ar[r]_-\cof&\E([1]),\ultwocell\omit{}&&
\E([1])\ar[r]_-\fib&\E([1]).
}
\]
By duality it is enough to construct the natural transformation in the square on the left. Denoting by $i\colon[1]\to\ulcorner$ the sieve classifying the horizontal morphism $(0,0)\to (1,0)$, let us recall that $\cof$ is defined by the rows in the following diagram
\[
\xymatrix{
\D([1])\ar[r]^-{i_\ast}\ar[d]_-F\drtwocell\omit{\cong}&\D(\ulcorner)\ar[r]^-{(i_\ulcorner)_!}\ar[d]_-F&\D(\square)\ar[r]^-{(k')^\ast}\ar[d]^-F&\D([1])\ar[d]^-F\\
\E([1])\ar[r]_-{i_\ast}&\E(\ulcorner)\ar[r]_-{(i_\ulcorner)_!}&\E(\square)\ar[r]_-{(k')^\ast}\ultwocell\omit{}&\E([1]),\ultwocell\omit{\cong}
}
\]
in which $k'\colon[1]\to\square$ classifies the vertical morphism $(1,0)\to(1,1)$. In this diagram the two natural transformations on the left are instances of \eqref{eq:mate-mor-cont} and \eqref{eq:mate-mor-cocont}, respectively, while the transformation on the right is a pseudo-naturality isomorphism. As a pointed morphism, $F$ preserves by \autoref{cor:initial-cosieve} right Kan extensions along the sieve $i$. Passing to the inverse of the natural transformation on the left, we can define the canonical transformation as the following pasting
\begin{align}\label{eq:pointed-cof}
\cof\circ F_{[1]}&= (k')^\ast\circ (i_\ulcorner)_!\circ i_\ast\circ  F_{[1]}\\
&\cong (k')^\ast\circ (i_\ulcorner)_!\circ  F_\ulcorner\circ  i_\ast\\
&\to (k')^\ast\circ  F_\square\circ (i_\ulcorner)_!\circ  i_\ast\\
&\cong F_{[1]}\circ (k')^\ast\circ  (i_\ulcorner)_!\circ  i_\ast\\
&= F_{[1]}\circ \cof.
\end{align}
The functoriality of mates with respect to pasting implies that these canonical natural transformations
\begin{equation}\label{eq:pointed-cof-fib}
\cof\circ F\to F\circ\cof\qquad\text{and}\qquad F\circ\fib\to \fib\circ F
\end{equation}
are compatible with compositions and identities. If the transformations in \eqref{eq:pointed-cof-fib} are invertible, then we say that $F$ \textbf{preserves cofibers or fibers}, respectively. 
\end{con}

\begin{con}\label{con:pointed-susp-omega}
Let us recall that the suspension functor in a pointed derivator \D is defined as
\[
\Sigma=(1,1)^\ast\circ (i_\ulcorner)_!\circ (0,0)_\ast\colon\D(\bbone)\to\D(\ulcorner)\to\D(\square)\to\D(\bbone).
\]
Since $(0,0)\colon\bbone\to\ulcorner$ is a sieve, associated to a pointed morphism $F\colon\D\to\E$ of pointed derivators we can consider the composition
\begin{align}
\Sigma\circ F_\bbone&= (1,1)^\ast\circ (i_\ulcorner)_!\circ (0,0)_\ast\circ  F_\bbone\\
&\cong (1,1)^\ast\circ (i_\ulcorner)_!\circ  F_\ulcorner\circ  (0,0)_\ast\\
&\to (1,1)^\ast\circ  F_\square\circ (i_\ulcorner)_!\circ  (0,0)_\ast\\
&\cong F_\bbone\circ (1,1)^\ast\circ  (i_\ulcorner)_!\circ  (0,0)_\ast\\
&= F_\bbone\circ \Sigma.
\end{align}
This leads to canonical natural transformations
\begin{equation}\label{eq:pointed-sigma-omega}
\Sigma\circ F\to F\circ\Sigma\qquad\text{and}\qquad F\circ\Omega\to \Omega\circ F, 
\end{equation}
which are compatible with respect to compositions and identities. And if these transformations are invertible, the morphism is said to \textbf{preserve suspensions or loops}, respectively.
\end{con}

\begin{rmk}
Note that the canonical morphism \eqref{eq:pointed-sigma-omega} points in the opposite direction as the natural transformations $\varphi\colon \Sigma F\to F\Sigma$ yielding \emph{exact structures} on additive functors $F$ of triangulated categories. We will come back to this in \S\ref{sec:can-triang}.
\end{rmk}

\begin{con}\label{con:ptd-cof-seq}
Let $[2]$ be the poset $(0<1<2)$ and let $\boxbar=[2]\times[1]$. The formation of coherent cofiber sequences in a pointed derivator \D defines a functor $\D([1])\to\D(\boxbar)$. Since this functor is given by a right extension by zero followed by a left Kan extensions, associated to a pointed morphism $F\colon\D\to\E$ of pointed derivators there is a canonical natural transformation populating the diagram
\begin{equation}\label{eq:mor-pres-cof-seq}
\vcenter{
\xymatrix{
\D([1])\ar[r]\ar[d]_-F&\D(\boxbar)\ar[d]^-F\\
\E([1])\ar[r]&\E(\boxbar).\ultwocell\omit{}
}
}
\end{equation}
These natural transformations are compatible with identities and composition. A pointed morphism \textbf{preserves cofiber sequences} if this canonical transformation is invertible, and there is the dual notion of a pointed morphism which \textbf{preserves fiber sequences}.
\end{con}

\begin{rmk}\label{rmk:comp-of-mates}
As a consequence of \autoref{lem:mates-cont-trafo}, the canonical transformations constructed in \autoref{con:pointed-cof-fib}, \autoref{con:pointed-susp-omega}, and \autoref{con:ptd-cof-seq} are compatible with natural transformations of pointed morphisms. 
\end{rmk}

For later reference we make this remark explicit in the following special case.

\begin{prop}\label{prop:ptd-comp-susp-natural}
For pointed morphisms $F,G\colon\D\to\E$ of pointed derivators and a natural transformation $\alpha\colon F\to G$ the following diagram commutes
\[
\xymatrix{
\Sigma\circ F\ar[r]\ar[d]_-\alpha&F\circ \Sigma\ar[d]^-\alpha\\
\Sigma \circ G\ar[r]&G\circ \Sigma,
}
\]
in which the unlabeled morphisms are instances of \eqref{eq:pointed-sigma-omega}.
\end{prop}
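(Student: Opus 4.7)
The plan is to decompose the canonical map $\Sigma F \to F \Sigma$ into the three elementary pieces exhibited in \autoref{con:pointed-susp-omega} and then verify compatibility with $\alpha$ piece by piece. Recall that the canonical transformation factors as the pasting
\[
(1,1)^\ast (i_\ulcorner)_! (0,0)_\ast F_\bbone
\xrightarrow{\gamma^{-1}} (1,1)^\ast (i_\ulcorner)_! F_\ulcorner (0,0)_\ast
\xrightarrow{\beta} (1,1)^\ast F_\square (i_\ulcorner)_! (0,0)_\ast
\xrightarrow{\gamma} F_\bbone (1,1)^\ast (i_\ulcorner)_! (0,0)_\ast,
\]
where the first map is the inverse of the canonical mate \eqref{eq:mate-mor-cont} associated to the sieve $(0,0)\colon\bbone\to\ulcorner$ (invertible by \autoref{cor:initial-cosieve} since $F$ is pointed), the second is the canonical mate \eqref{eq:mate-mor-cocont} associated to $(i_\ulcorner)_!$, and the third is a pseudo-naturality constraint of $F$ at $(1,1)\colon\bbone\to\square$. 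The analogous decomposition holds for $G$, and the task is to show that each of the three stages commutes with whiskering by $\alpha$.

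For the middle stage, \autoref{lem:mates-cont-trafo} applied to the functor $i_\ulcorner\colon\ulcorner\to\square$ yields exactly the commutative square
\[
\xymatrix{
(i_\ulcorner)_! F_\ulcorner \ar[r]^-{\beta}\ar[d]_-{\alpha_\ulcorner} & F_\square (i_\ulcorner)_! \ar[d]^-{\alpha_\square}\\
(i_\ulcorner)_! G_\ulcorner \ar[r]_-{\beta} & G_\square (i_\ulcorner)_!,
}
\]
and the same lemma applied to $(0,0)\colon\bbone\to\ulcorner$ gives the corresponding square for the right Kan extension mates; inverting the horizontal arrows (legal by pointedness) yields compatibility with $\alpha$ for the first stage. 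The third stage is just the coherence axiom of a modification, asserting that $\alpha_\bbone \circ (1,1)^\ast \circ \gamma_{F,(1,1)} = \gamma_{G,(1,1)} \circ (1,1)^\ast \circ \alpha_\square$, which holds by definition of $\alpha$ as a natural transformation of derivators (compare the diagram used in the proof of \autoref{lem:nat-iso-under}).

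Pasting these three commutative squares horizontally, together with the compatibility of $\alpha$ with the identity functors $(1,1)^\ast$ and $(0,0)_\ast$ (again modification coherence), produces a single commutative rectangle whose two outer vertical composites are $\alpha_\bbone\circ(\Sigma F)$ and $(\Sigma G)\circ\alpha_\bbone$ and whose two horizontal composites are the canonical maps $\Sigma F\to F\Sigma$ and $\Sigma G\to G\Sigma$. This is precisely the claimed square. The only subtle point is bookkeeping the variance correctly when passing to the inverse of the right Kan extension mate at the first stage, but since inversion preserves commuting squares this presents no real obstacle.
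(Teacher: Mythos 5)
Your proof is correct and follows essentially the same route as the paper: decompose \eqref{eq:pointed-sigma-omega} into the inverted right-Kan-extension mate along the sieve $(0,0)$, the left-Kan-extension mate along $i_\ulcorner$, and the pseudo-naturality constraint at $(1,1)$, then apply \autoref{lem:mates-cont-trafo} twice and modification coherence once. No gaps.
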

\begin{proof}
We write $i=(0,0)\colon\bbone\to\ulcorner$ for the sieve classifying the initial object and consider the following diagram
\[
\xymatrix@-0.4pc{
(1,1)^\ast(i_\ulcorner)_!i_\ast F_\bbone\ar[d]_-\alpha&(1,1)^\ast(i_\ulcorner)_!F_\ulcorner i_\ast\ar[r]\ar[l]_-\cong\ar[d]_-\alpha&(1,1)^\ast F_\square(i_\ulcorner)_!i_\ast\ar[d]^-\alpha\ar[r]^-\cong&F_\bbone(1,1)^\ast (i_\ulcorner)_!i_\ast\ar[d]^-\alpha\\
(1,1)^\ast(i_\ulcorner)_!i_\ast F_\bbone&(1,1)^\ast(i_\ulcorner)_!F_\ulcorner i_\ast\ar[r]\ar[l]^-\cong&(1,1)^\ast F_\square(i_\ulcorner)_!i_\ast\ar[r]_-\cong&F_\bbone(1,1)^\ast (i_\ulcorner)_!i_\ast,
}
\]
The rows coincide with the canonical transformations \eqref{eq:pointed-sigma-omega} from \autoref{con:pointed-susp-omega}, hence it suffices to show that this diagram commutes. The two squares on the left commute by two applications of \autoref{lem:mates-cont-trafo} while the square on the right commutes by the coherence properties of a modification.
\end{proof}

\section{Exact morphisms and homotopy finite Kan extensions}
\label{sec:exact-htpy-finite}

In this section we collect some results concerning left exact, right exact, and exact morphisms of derivators. We show that right exact morphisms preserve many basic constructions and, more generally, left homotopy finite left Kan extensions.

\begin{defn}\label{defn:exact-mor}
\begin{enumerate}
\item A morphism of derivators \textbf{preserves pushouts} if it preserves colimits of shape $\ulcorner$. Dually, a morphism of derivators \textbf{preserves pullbacks} if it preserves limits of shape $\lrcorner$.
\item A morphism of derivators is \textbf{right exact} if it preserves initial objects and pushouts. Dually, a morphism of derivators is \textbf{left exact} if it preserves terminal objects and pullbacks. 
\item A morphism of derivators is \textbf{exact} if it is right exact and left exact.
\end{enumerate}
\end{defn}

\begin{prop}[{\cite[Cor.~4.17]{groth:ptstab}}]\label{prop:rex-fin-cop}
Right exact morphisms of derivators preserve finite coproducts and left exact morphisms preserve finite products.
\end{prop}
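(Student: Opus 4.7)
The plan is to reduce the statement, via the duality principle of \autoref{lem:dual-prin-mor}, to showing that a right exact morphism $F\colon\D\to\E$ preserves finite coproducts (the statement about left exact morphisms and finite products then follows by applying the result to $F\op\colon\D\op\to\E\op$). For the reduced statement, I would factor the structural morphism that computes a binary coproduct through the pushout shape $\ulcorner$ and combine preservation of initial objects with preservation of pushouts using the composition-closure property \autoref{prop:sorite-lim-II}(ii).

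More concretely, let $i\colon\bbone\sqcup\bbone\to\ulcorner$ denote the inclusion picking out the two maximal objects $(1,0)$ and $(0,1)$. Since any non-identity morphism out of $(1,0)$ or $(0,1)$ in $\ulcorner$ would have to land outside $\ulcorner$ (there are no such morphisms), the image of $i$ is closed under morphisms in $\ulcorner$, so $i$ is a cosieve. Moreover, we have the obvious factorization
\[
\pi_{\bbone\sqcup\bbone}\;=\;\pi_\ulcorner\circ i\colon\bbone\sqcup\bbone\to\ulcorner\to\bbone.
\]
Assuming $F$ is right exact, it preserves initial objects and hence, by \autoref{lem:initial-cosieve}, it preserves left Kan extensions along every cosieve; applied to $i$ this gives preservation of left Kan extensions along $i$. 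By the definition of right exactness, $F$ also preserves pushouts, i.e.\ left Kan extensions along $\pi_\ulcorner\colon\ulcorner\to\bbone$. Thus \autoref{prop:sorite-lim-II}(ii) applied to the factorization above shows that $F$ preserves left Kan extensions along $\pi_{\bbone\sqcup\bbone}$, which by definition means $F$ preserves binary coproducts.

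From here, \autoref{cor:coprod-pres} upgrades preservation of binary coproducts to preservation of all non-empty finite coproducts, while the empty case (the initial object) is handled directly by right exactness. Together these give preservation of all finite coproducts, and the dual statement for products follows by duality as noted.

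I do not anticipate a serious obstacle: the whole argument hinges on recognizing the factorization $\pi_{\bbone\sqcup\bbone}=\pi_\ulcorner\circ i$ with $i$ a cosieve, so that the two hypotheses bundled into right exactness line up exactly with the two preservation properties needed. The only mild point of care is verifying the cosieve property for $i$, which is straightforward from the combinatorics of $\ulcorner$.
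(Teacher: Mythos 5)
Your argument is correct, and it is a genuinely shorter route than the one the paper takes, even though both proofs rest on the same underlying idea that a binary coproduct is a pushout over an initial object. You factor $\pi_{\bbone\sqcup\bbone}=\pi_\ulcorner\circ i$ and combine \autoref{lem:initial-cosieve} (for the cosieve $i$) with the definition of pushout-preservation (left Kan extensions along $\pi_\ulcorner$) via the composition closure of \autoref{prop:sorite-lim-II}; this lands directly on preservation of left Kan extensions along $\nabla_{\bbone\sqcup\bbone}$, which is the definition of preserving binary coproducts. The paper instead works with the fully faithful functor $k\colon\bbone\sqcup\bbone\to\square$ and its two factorizations $k=i_\ulcorner\circ i=i_\lrcorner\circ j$ from \eqref{eq:copr-II}: it first shows preservation of left Kan extensions along $k$ (using \autoref{prop:mor-colim-vs-cocone} to translate pushout-preservation into preservation along $i_\ulcorner$), then invokes the cancellation property \autoref{lem:cancel-lkan} to deduce preservation along the cocone inclusion $j\colon\bbone\sqcup\bbone\to\lrcorner$, and finally applies \autoref{prop:coprod-pres}. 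What the paper's longer detour buys is a workout of the coproduct-square machinery of \autoref{lem:copr-square} and of the cancellation lemma, which the paper reuses immediately afterwards (e.g.\ in \autoref{warn:cancel-lkan-2}); what your route buys is economy, since it needs neither \autoref{lem:cancel-lkan} nor the equivalence of (i) and (ii) in \autoref{prop:coprod-pres}. The remaining steps (the upgrade to non-empty finite coproducts via \autoref{cor:coprod-pres}, the empty case from right exactness, and the reduction of the product statement to the coproduct statement via \autoref{lem:dual-prin-mor}) match the paper.
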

\begin{proof}
Let $F\colon\D\to\E$ be a right exact morphism of derivators. We show that $F$ preserves binary coproducts. Let $k\colon\bbone\sqcup\bbone\to\square$ be the functor classifying the objects $(1,0),(0,1)$, and let 
\begin{equation}\label{eq:rex-fin-cop}
\vcenter{
\xymatrix{
\bbone\sqcup\bbone\ar[r]^-i\ar[d]_-j&\ulcorner\ar[d]^-{i_\ulcorner}\\
\lrcorner\ar[r]_-{i_\lrcorner}&\square
}
}
\end{equation}
be the two factorizations of $k$ as in \eqref{eq:copr-II}. Since $i\colon\bbone\sqcup\bbone\to\ulcorner$ is a cosieve, the functor $i_!$ is by \cite[Prop.~1.23]{groth:ptstab} left extension by initial objects. As $F$ preserves initial objects, by \autoref{lem:initial-cosieve} it also preserves left Kan extensions along $i$. As a right exact morphism, $F$ also preserves left Kan extensions along $i_\ulcorner$ (\autoref{prop:mor-colim-vs-cocone}), and $F$ hence also preserves left Kan extensions along $k=i_\ulcorner\circ i$ (\autoref{prop:sorite-lim-II}). Since $i_\lrcorner$ and $j$ are fully faithful and $k=i_\lrcorner\circ j$, it follows that $F$ also preserves left Kan extensions along $j$ (\autoref{lem:cancel-lkan}), and \autoref{prop:coprod-pres} then implies that $F$ preserves binary coproducts. The case of non-empty, finite coproducts is taken care of by \autoref{cor:coprod-pres}. Moreover, $F$ also preserves empty coproducts, i.e., initial objects since $F$ is right exact. 
\end{proof}

We revisit \autoref{warn:cancel-lkan}.

\begin{warn}\label{warn:cancel-lkan-2}
Let $F\colon\D\to\E$ be a morphism of derivators, let $u\colon A\to B$ and $v\colon B\to C$ be fully faithful. If $F$ preserves left Kan extensions along $vu$, then, in general, $F$ does not preserve left Kan extensions along $v$.

To construct a counter-example we again consider the situation in \eqref{eq:rex-fin-cop}. Let $F\colon\D\to\E$ be a morphism of derivators which preserves finite, possibly empty coproducts. By \autoref{lem:copr-square} and \autoref{prop:coprod-pres} the morphism $F$ preserves the essential image of $k_!$, and hence also left Kan extensions along $k$ by \autoref{lem:ff-kan-ess-im}. However, in general, $F$ does not preserve left Kan extensions along $i_\ulcorner$ or, equivalently, pushouts (\autoref{prop:mor-colim-vs-cocone}). In fact, any ordinary functor between complete and cocomplete categories which preserves finite coproducts but not pushouts provides a counter-example.
\end{warn}

\begin{prop}[{\cite[Prop.~3.21]{groth:ptstab}}]\label{prop:rex-susp}
Right exact morphisms of pointed derivators preserve suspensions.
\end{prop}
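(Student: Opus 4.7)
The plan is to argue that for a right exact pointed morphism $F\colon\D\to\E$, each of the three non-trivial natural transformations composing the canonical map $\Sigma\circ F\to F\circ\Sigma$ of \autoref{con:pointed-susp-omega} is in fact an isomorphism. Unwinding the construction, recall that
\[
\Sigma=(1,1)^\ast\circ(i_\ulcorner)_!\circ(0,0)_\ast\colon\D(\bbone)\to\D(\bbone),
\]
and that the canonical transformation $\Sigma F\to F\Sigma$ is built as the pasting of (a) the inverse of a canonical mate expressing that $F$ commutes with $(0,0)_\ast$, (b) the canonical mate of \eqref{eq:mate-mor-cocont} associated to $(i_\ulcorner)_!$ and $F$, and (c) the pseudo-naturality isomorphism of $F$ along $(1,1)\colon\bbone\to\square$.

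First I would observe that (c) is invertible for free, as it is a pseudo-naturality constraint of $F$. Next, for (a), the functor $(0,0)\colon\bbone\to\ulcorner$ is a sieve (its image is the initial object of~$\ulcorner$), so by \autoref{cor:initial-cosieve} the pointedness of $F$ already implies that $F$ preserves right Kan extensions along $(0,0)$; hence the canonical mate $F_\ulcorner\circ(0,0)_\ast\to(0,0)_\ast\circ F_\bbone$ of \eqref{eq:mate-mor-cont} is an isomorphism, and so is its inverse appearing in~(a). Finally, for (b), since $F$ is right exact it preserves colimits of shape $\ulcorner$, and by \autoref{prop:mor-colim-vs-cocone} this is equivalent to preservation of left Kan extensions along $i_\ulcorner\colon\ulcorner\to\square$; thus the canonical mate $(i_\ulcorner)_!\circ F_\ulcorner\to F_\square\circ(i_\ulcorner)_!$ is an isomorphism.

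Composing the three isomorphisms (a), (b), (c) as in \autoref{con:pointed-susp-omega} yields that $\Sigma\circ F\to F\circ\Sigma$ is a natural isomorphism, i.e., $F$ preserves suspensions. None of the steps is really hard once the construction has been set up; the only point requiring care is to match the three pieces of the pasting in \autoref{con:pointed-susp-omega} with the three inputs (pointedness, right exactness, pseudo-naturality), and to invoke \autoref{cor:initial-cosieve} to convert pointedness into preservation of the right Kan extension along the sieve $(0,0)$. The dual statement that left exact morphisms preserve loops follows formally by applying the established result to $F\op\colon\D\op\to\E\op$.
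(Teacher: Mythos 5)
Your proposal is correct and follows exactly the paper's argument: the canonical transformation $\Sigma\circ F\to F\circ\Sigma$ from \autoref{con:pointed-susp-omega} is already a pasting of isomorphisms (the inverse mate for the sieve $(0,0)_\ast$, invertible by pointedness via \autoref{cor:initial-cosieve}, and the pseudo-naturality constraint at $(1,1)$) together with the single possibly non-invertible mate for $(i_\ulcorner)_!$, and right exactness makes that last piece invertible by \autoref{prop:mor-colim-vs-cocone}. The paper's proof is just a terser statement of the same observation.
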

\begin{proof}
Given a right exact morphism $F\colon\D\to\E$ of pointed derivators, we have to show that the canonical transformation $\Sigma\circ F\to F\circ\Sigma$ \eqref{eq:pointed-sigma-omega} is invertible. But this is immediate from \autoref{con:pointed-susp-omega}, since right exact morphisms preserve left Kan extensions along $i_\ulcorner$ (\autoref{prop:mor-colim-vs-cocone}).
\end{proof}

In a similar way one makes precise and proves the following result.

\begin{prop}\label{prop:rex-basic-con}
Right exact morphisms of pointed derivators preserve cones, cofibers, cofiber squares, cofiber sequences, and iterated cofiber sequences.
\end{prop}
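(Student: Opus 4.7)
The plan is to run the same argument as in the proof of Proposition~\autoref{prop:rex-susp} for each of the constructions listed. Each one is, by definition, built as a composite of three types of functors: (a) restriction morphisms $u^\ast$, (b) right Kan extensions along sieves (``right extensions by zero''), and (c) left Kan extensions along cosieves whose pointwise formulas at each newly adjoined object are computed by colimits of shape~$\ulcorner$, i.e.\ pushouts. For a pointed morphism $F\colon\D\to\E$, assembling the pseudo-naturality constraints $\gamma_u$ with the mates \eqref{eq:mate-mor-cocont} and \eqref{eq:mate-mor-cont} exactly as in Constructions~\autoref{con:pointed-cof-fib}, \autoref{con:pointed-susp-omega}, and~\autoref{con:ptd-cof-seq} produces a canonical comparison natural transformation between the construction on $\D$ and its counterpart on~$\E$.

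Under the hypothesis of right exactness, each constituent mate in this composite is invertible. Restrictions are compatible with $F$ via the pseudo-naturality isomorphisms. Corollary~\autoref{cor:initial-cosieve} guarantees that $F$ preserves right extensions by zero along sieves, so the corresponding mates are isomorphisms. Proposition~\autoref{prop:mor-colim-vs-cocone} together with Proposition~\autoref{prop:sorite-lim-II} guarantee that $F$ preserves the left Kan extensions along~$i_\ulcorner\colon\ulcorner\to\square$ as well as their iterated composites. The functoriality of mates with respect to pasting then forces the composite canonical comparison transformation to be an isomorphism.

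In detail, cones and cofibers are covered by Constructions~\autoref{con:pointed-cof-fib} and~\autoref{con:pointed-susp-omega}; the cofiber square of a morphism is obtained by omitting the final restriction in the cofiber construction; a cofiber sequence in $\D(\boxbar)$ with $\boxbar=[2]\times[1]$ (Construction~\autoref{con:ptd-cof-seq}) factors as a right extension by zero along a sieve followed by a left Kan extension along a cosieve in~$\boxbar$ whose pointwise formulas are $\ulcorner$-shaped pushouts. Iterated cofiber sequences in~$[n]\times[1]$ for $n\geq 2$ are handled by induction on~$n$: at each stage one extends by a further zero along a sieve and then performs one further $\ulcorner$-shaped pushout. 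Naturality of these comparison maps in $F$ (and with respect to natural transformations of pointed morphisms) is provided by Remark~\autoref{rmk:comp-of-mates}.

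The main obstacle is purely combinatorial: for the iterated cofiber sequence one must be careful to exhibit the decomposition of the defining functor into the elementary sieve and cosieve steps above, and to verify that the ``new'' cosieves appearing at each stage are indeed built from $i_\ulcorner$-type inclusions as required by Proposition~\autoref{prop:mor-colim-vs-cocone}. Once the decomposition is explicit, invertibility of the composite mate is a formal consequence of pointedness, right exactness, and the pasting calculus of mates.
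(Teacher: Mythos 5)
Your proposal is correct and follows precisely the route the paper intends: the paper offers no proof of this proposition beyond the remark that one argues ``in a similar way'' as for \autoref{prop:rex-susp}, and your decomposition of each construction into restrictions, right extensions by zero along sieves, and pushout-type left Kan extensions, with the composite comparison mate inverted termwise via \autoref{cor:initial-cosieve}, \autoref{prop:mor-colim-vs-cocone}, and \autoref{prop:sorite-lim-II}, is exactly that argument. The one point you rightly flag as the remaining work is genuine but routine: for (iterated) cofiber sequences the slices of the relevant fully faithful inclusions are finite posets rather than literal copies of $\ulcorner$, so one must either exhibit a homotopy final functor $\ulcorner\to(u/b)$ at each newly adjoined object and invoke \autoref{lem:final} together with \autoref{rmk:cocont-colimits}, or simply observe that these inclusions are left homotopy finite.
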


\begin{rmk}
The canonical isomorphisms in \autoref{prop:rex-susp} and \autoref{prop:rex-basic-con} are compatible with natural transformations of right exact morphisms (\autoref{rmk:comp-of-mates} and \autoref{prop:ptd-comp-susp-natural}).
\end{rmk}

\begin{prop}[{\cite[Cor.~4.17]{groth:ptstab}}]\label{prop:exact}
A morphism of stable derivators is left exact if and only if it right exact if and only if it is exact.
\end{prop}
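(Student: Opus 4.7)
The plan is to exploit the defining property of stable derivators: a square is cocartesian if and only if it is cartesian. Since an exact morphism is by definition both left and right exact, it suffices to show that a right exact morphism $F\colon\D\to\E$ between stable derivators is automatically left exact; the converse direction then follows by the duality principle (\autoref{lem:dual-prin-mor}). And since $\D$ and $\E$ are pointed, the assumption that $F$ preserves initial objects already gives that $F$ preserves terminal objects by \autoref{lem:mor-pointed-zero}, so the real content is that $F$ preserves pullbacks.

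To establish this, I would use the identifications $\ulcorner^\rhd\cong\square$ and $\lrcorner^\lhd\cong\square$, under which \autoref{prop:mor-colim-vs-cocone} and its dual translate the conditions as follows: $F$ preserves pushouts if and only if the functor $F_\square\colon\D(\square)\to\E(\square)$ sends cocartesian squares to cocartesian squares, and $F$ preserves pullbacks if and only if $F_\square$ sends cartesian squares to cartesian squares. Now let $Y\in\D(\square)$ be cartesian. By stability of $\D$ the square $Y$ is also cocartesian; by right exactness of $F$ the square $F(Y)\in\E(\square)$ is cocartesian; and by stability of $\E$ it is then cartesian. Thus $F_\square$ sends cartesian squares to cartesian squares, so $F$ preserves pullbacks.

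The argument is conceptually short, and the main thing to be careful about is the translation between "$F$ preserves colimits of shape $\ulcorner$" and "$F_\square$ sends cocartesian squares to cocartesian squares". This is the content of \autoref{prop:mor-colim-vs-cocone} combined with the identification of $\ulcorner^\rhd$ with $\square$ (via the adjoined terminal object corresponding to $(1,1)$), and the dual statement for pullbacks; after this is in place, the equivalence becomes essentially a one-line consequence of stability applied on both sides.
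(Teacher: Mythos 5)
Your proposal is correct and follows essentially the same route as the paper: reduce via \autoref{lem:mor-pointed-zero} to the preservation of the relevant (co)limits of shape $\ulcorner$ or $\lrcorner$, translate via \autoref{prop:mor-colim-vs-cocone} into the statement that $F_\square$ preserves (co)cartesian squares, and then apply stability on both sides. The only cosmetic difference is that you prove right exact implies left exact while the paper proves the dual implication, which amounts to the same argument.
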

\begin{proof}
By duality it suffices to show that a left exact morphism of stable derivators is right exact, and by \autoref{lem:mor-pointed-zero} it only remains to show that a left exact morphism preserves pushouts. Since the morphism preserves pullbacks, it sends cartesian squares to cartesian squares (\autoref{prop:mor-colim-vs-cocone}), which, using stability, is to say that it sends cocartesian squares to cocartesian squares. By an additional application of \autoref{prop:mor-colim-vs-cocone} this implies that the morphism preserves pushouts.
\end{proof}

\begin{cor}\label{cor:adjoint-exact}
Left adjoint morphisms, right adjoint morphisms, and equivalences of stable derivators are exact.
\end{cor}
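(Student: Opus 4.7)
The plan is to combine the cocontinuity/continuity of adjoint morphisms from \autoref{prop:sorite-lim-I} with the equivalence of left exactness, right exactness, and exactness in the stable setting established in \autoref{prop:exact}.

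First I would handle the left adjoint case. By \autoref{prop:sorite-lim-I}(i), every left adjoint morphism of derivators is cocontinuous, so in particular it preserves initial objects and pushouts, i.e.\ it is right exact in the sense of \autoref{defn:exact-mor}. Since the source and target are stable, \autoref{prop:exact} upgrades right exactness to exactness. Dually, by the opposite version of \autoref{prop:sorite-lim-I}(i) (obtained via \autoref{lem:dual-prin-mor}), every right adjoint morphism is continuous, hence left exact, and \autoref{prop:exact} again yields exactness.

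For equivalences I would invoke the same machinery: \autoref{prop:sorite-lim-I}(i) records that equivalences of derivators are cocontinuous, so they are right exact and therefore exact by \autoref{prop:exact}. Alternatively, one can note that an equivalence is simultaneously a left and a right adjoint and apply the previous two cases.

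There is no real obstacle here; the corollary is essentially a formal consequence of two facts already proved earlier, namely that adjoint morphisms preserve the relevant one-sided (co)limits and that one-sided exactness suffices in the stable case. The only thing to double check is that \autoref{prop:sorite-lim-I}(i) indeed applies to morphisms between \emph{any} derivators (it is stated without a pointedness or stability hypothesis), so its application to stable derivators is legitimate, and that \autoref{lem:mor-pointed-zero} (implicitly used inside \autoref{prop:exact}) goes through because stable derivators are in particular pointed.
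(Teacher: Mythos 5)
Your proposal is correct and matches the paper's own (very terse) proof, which simply cites \autoref{prop:exact} together with the (co)continuity of adjoint morphisms and equivalences; you have just spelled out the same deduction in more detail. The extra checks you flag (that \autoref{prop:sorite-lim-I} has no stability hypothesis, and that stable derivators are pointed) are indeed unproblematic.
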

\begin{proof}
This follows from \autoref{prop:exact} and the (co)continuity of adjoints.
\end{proof}

This applies, in particular, to derived adjunctions and equivalences arising from Quillen adjunctions and Quillen equivalences of stable model categories.

\begin{egs}\label{egs:exact-mor}
The class of exact morphisms is closed under compositions, shifting, products, and the passage to opposite, naturally isomorphic or equivalent morphisms. \end{egs}

As a preparation for the compatibility of right exact morphism with left homotopy finite left Kan extensions we recall a theorem of Ponto--Shulman~\cite{ps:linearity} showing that such morphisms preserve homotopy finite colimits. 

\begin{defn}
A small category~$A$ is \textbf{strictly homotopy finite} if the nerve $NA$ is a finite simplicial set. A small category is \textbf{homotopy finite} if it is equivalent to a strictly homotopy finite category.
\end{defn}

Thus, strictly homotopy finite categories are precisely the finite and skeletal categories which have no non-trivial endomorphisms. Alternatively, these are precisely the finite and directed category. (Let us recall that a category is \textbf{directed} if whenever $f\colon a\to b$ is a non-identity morphism, then there is no morphism $b\to a$.)

\begin{thm}[{\cite[Thm.~7.1]{ps:linearity}}]\label{thm:right-exact-finite}
Every right exact morphism of derivators preserves homotopy finite colimits.
\end{thm}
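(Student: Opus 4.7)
The plan is to induct on a cellular measure of complexity for $A$. First, by \autoref{prop:sorite-lim-II}(iv) the property of preserving colimits of shape $A$ depends only on the equivalence class of $A$ in $\cCat$, so it suffices to treat strictly homotopy finite categories, i.e., those which are finite, skeletal, and directed. I proceed by induction on the number of objects $n$ of such an $A$. The base case $n = 0$ is handled directly by right exactness (which includes preservation of initial objects), and $n = 1$ gives $A = \bbone$ for which the statement is trivial.

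For the inductive step, assume $A$ is strictly homotopy finite with $n\ge 2$ objects. Since $A$ is finite and directed it admits a maximal object $\omega$ (no non-identity outgoing morphisms). Let $A'\subset A$ be the full subcategory on $A\setminus\{\omega\}$ and let $B\subset A'$ be the full subcategory on those objects admitting a morphism to $\omega$; both are strictly homotopy finite with strictly fewer objects than $A$. The fully faithful functor $B^\rhd \to A$ sending $\infty\mapsto\omega$ identifies $A$ with the pushout $A' \cup_B B^\rhd$ in $\cCat$, and the crucial claim is that this presentation induces, for every derivator $\D$ and every $X\in\D(A)$, a cocartesian square
\[
\xymatrix{
\colim_B X|_B \ar[r]\ar[d] & \colim_{A'} X|_{A'}\ar[d]\\
X_\omega \ar[r] & \colim_A X
}
\]
in $\D(\bbone)$, where the bottom-left corner uses that $B^\rhd$ has terminal object $\infty$ mapping to $\omega$. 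Granting this, the inductive step is immediate: by the inductive hypothesis $F$ preserves $\colim_{A'}$ and $\colim_B$, and as a right exact morphism $F$ preserves pushouts; applying $F$ to the cocartesian square above and invoking the corresponding decomposition for $A$ interpreted in $\E$ shows that the canonical mate $\colim_A F(X) \to F\colim_A X$ is an isomorphism.

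The main obstacle is establishing the pushout description of $A$ together with its compatibility with colimits in every derivator. This is ultimately a statement about homotopy exact squares in $\cCat$, and the natural approach is to combine the cocone characterization of colimits from \autoref{prop:cocone-comparison} and \autoref{prop:mor-colim-vs-cocone} with the fully-faithful criteria developed in \S\ref{sec:colimiting-cocones}, in particular \autoref{lem:he-ex-ff}. Concretely, one realizes the comparison map as a composite of mates associated to the four naturality squares attached to the inclusions $B\to A'$, $B\to B^\rhd$, $A'\to A$, $B^\rhd\to A$, and verifies each is homotopy exact by reducing to slice squares over the objects of $A$, splitting the argument according to whether the target object lies in $A'$ or equals $\omega$. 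The cell attached to $\omega$ is precisely a cocone on $B$, which is why the inductive hypothesis applied to $B$ together with preservation of pushouts is exactly what the argument needs.
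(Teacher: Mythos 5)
The paper offers no proof of this statement---it is imported verbatim from Ponto--Shulman---so your argument has to stand entirely on its own. Your overall strategy (reduce to strictly homotopy finite $A$, remove a maximal object $\omega$, and realize $\colim_A$ as a pushout of colimits over smaller data) is indeed the right one, but your central claim is false as stated. The decomposition $A\cong A'\cup_B B^\rhd$, with $B$ the \emph{full subcategory} of objects admitting a morphism to $\omega$, fails whenever some object has two distinct morphisms to $\omega$. Take $A$ to be the parallel pair $a\rightrightarrows\omega$: it is finite, skeletal, and directed, hence strictly homotopy finite, but $B=\{a\}$, $B^\rhd=[1]$, and your square has identity top edge, yielding $\colim_AX\cong X_\omega$ instead of the coequalizer of $X_a\rightrightarrows X_\omega$. (Already the functor $B^\rhd\to A$, $\infty\mapsto\omega$, requires a coherent choice of one morphism $b\to\omega$ for each $b\in B$, which need not exist.) The correct gluing object is not a full subcategory but the latching category $(A'/\omega)=(u/\omega)$ for $u\colon A'\into A$, whose objects are the non-identity morphisms into $\omega$; the square to establish is
\[
\xymatrix{
\colim_{(A'/\omega)}\,p^\ast u^\ast X\ar[r]\ar[d]&\colim_{A'}u^\ast X\ar[d]\\
X_\omega\ar[r]&\colim_AX,
}
\]
and it is \emph{not} induced by a pushout in \cCat; its cocartesianness has to be proved directly by a homotopy exact square computation, which is where the real content of the theorem sits.

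Two further repairs are then needed. Your induction on the number of objects breaks even after this correction: three parallel arrows $a\to\omega$ give a two-object category $A$ whose latching category has three objects, so $(A'/\omega)$ can be strictly larger than $A$. A working measure is lexicographic induction on the dimension of the nerve $NA$ and then on the number of objects (or on the number of nondegenerate simplices of $NA$): $A'$ has no larger dimension and fewer objects, while $(A'/\omega)$ has strictly smaller dimension. Finally, one must check that $F$ carries the cocartesian square for \D to the one for \E compatibly with the canonical mates \eqref{eq:mate-mor-cocont}; you gesture at this, and it does go through along the lines of \autoref{lem:units-vs-morphisms} and the functoriality of mates with pasting, but only once the square itself has been replaced by the correct one.
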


With our preparation we now extend this result to sufficiently finite left Kan extensions (\autoref{thm:rex-general}). The following definition is inspired by \autoref{rmk:cocont-colimits} and the notion of \emph{$L$-finite limits} in classical category theory \cite[Prop.~7]{pare:simply-conn-lim}.

\begin{defn}\label{defn:left-homotopy-finite}
Let $u\colon A\to B$ be a functor between small categories.
\begin{enumerate}
\item The functor $u$ is \textbf{left homotopy finite} if for every $b\in B$ there is a homotopy finite category~$C_b$ and a homotopy final functor $C_b\to (u/b)$.
\item The functor $u$ is \textbf{right homotopy finite} if for every $b\in B$ there is a homotopy finite category~$C_b$ and a homotopy cofinal functor $C_b\to (b/u)$.
\end{enumerate}
\end{defn}

We say that a morphism of derivators \textbf{preserves left homotopy finite left Kan extensions} if it preserves left Kan extensions along all left homotopy finite functors.

\begin{thm}\label{thm:rex-general}
Every right exact morphism of derivators preserves left homotopy finite left Kan extensions.
\end{thm}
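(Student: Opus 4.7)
The plan is to reduce the statement to the corresponding statement about colimits via the pointwise formula, and then apply the Ponto--Shulman theorem together with the finality lemma from \S\ref{sec:ctns-closure}.

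More precisely, let $F\colon\D\to\E$ be a right exact morphism of derivators and let $u\colon A\to B$ be left homotopy finite. By \autoref{rmk:cocont-colimits}, in order to show that $F$ preserves left Kan extensions along $u$, it suffices to check that $F$ preserves colimits of shape $(u/b)$ for every $b\in B$. By the definition of left homotopy finiteness (\autoref{defn:left-homotopy-finite}), for each such $b$ we can choose a homotopy finite category $C_b$ together with a homotopy final functor $w_b\colon C_b\to(u/b)$.

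Now comes the key step. Since $F$ is right exact, \autoref{thm:right-exact-finite} ensures that $F$ preserves colimits of shape $C_b$ (here $C_b$ is homotopy finite, possibly only up to equivalence, but by \autoref{prop:sorite-lim-II}(iv) preservation of colimits of a given shape is invariant under equivalence of indexing categories). Because $w_b\colon C_b\to(u/b)$ is homotopy final, \autoref{lem:final} applied to $F$ then upgrades this to the preservation of colimits of shape $(u/b)$. Running this argument for every $b\in B$ and invoking \autoref{rmk:cocont-colimits} concludes the proof.

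The proof is therefore essentially a one-line reduction, and the main conceptual point is not an obstacle but rather an observation: all of the actual work sits in the background results we are combining, namely the Ponto--Shulman preservation theorem (\autoref{thm:right-exact-finite}), the finality cancellation lemma (\autoref{lem:final}), and the pointwise characterization of preservation of Kan extensions (\autoref{rmk:cocont-colimits}). In particular, no new compatibility of mates has to be checked; the content of the theorem is that the class of shapes along which a right exact morphism preserves left Kan extensions is closed both under the slice construction and under homotopy final replacements.
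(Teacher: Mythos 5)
Your proof is correct and follows exactly the same route as the paper's: reduce to colimits of shape $(u/b)$ via \autoref{rmk:cocont-colimits}, apply \autoref{thm:right-exact-finite} to the homotopy finite category $C_b$, and transfer along the homotopy final functor using \autoref{lem:final}. The extra aside about invariance under equivalence is harmless but unnecessary, since \autoref{thm:right-exact-finite} already covers homotopy finite (not just strictly homotopy finite) categories.
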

\begin{proof}
Let $F\colon\D\to\E$ be a right exact morphism of derivators and let $u\colon A\to B$ be a left homotopy finite functor. By~\autoref{rmk:cocont-colimits} it is enough to show that $F$ preserves colimits of shape $(u/b)$ for all $b\in B$. By~\autoref{defn:left-homotopy-finite} for every $b\in B$ there is a homotopy finite category~$C_b$ and a homotopy final functor $i_b\colon C_b\to (u/b)$. \autoref{thm:right-exact-finite} shows that $F$ preserves colimits of shape~$C_b$ and, by~\autoref{lem:final}, the same is true for colimits of shape~$(u/b)$.
\end{proof}

\begin{rmk}
By this theorem right exact morphisms of derivators preserve a large class of left Kan extensions. It turns out that such morphisms also preserve many canonical isomorphisms between expressions involving such right Kan extensions, and we intend to come back to this in \cite{groth:formal}. In particular, in the framework of stable derivators this leads to a calculus of uniform formulas for stable derivators.
\end{rmk}

Also the following two variants of \autoref{thm:rex-general} are convenient. Due to their importance, we state them as separate theorems.

\begin{thm}\label{thm:rex-pointed}
Every right exact morphism of pointed derivators preserves
\begin{enumerate}
\item left homotopy finite left Kan extensions and
\item right extensions by zero.
\end{enumerate}
\end{thm}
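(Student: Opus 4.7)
The plan is to observe that both statements are essentially corollaries of results already established in the paper, so the proof reduces to assembling the right citations.

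For part (i), I would simply invoke \autoref{thm:rex-general}, which asserts that every right exact morphism of derivators (pointed or not) preserves left homotopy finite left Kan extensions. Since a right exact morphism of pointed derivators is in particular a right exact morphism of derivators, the conclusion is immediate. No extra work is needed.

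For part (ii), the first step is to note that any right exact morphism $F\colon\D\to\E$ of pointed derivators is automatically pointed in the sense of \autoref{defn:pointed-mor}. Indeed, $F$ preserves initial objects by the definition of right exactness, and in a pointed derivator initial objects coincide with zero objects, so $F$ preserves zero objects. The second step is then to apply \autoref{cor:initial-cosieve}, which says exactly that a pointed morphism of pointed derivators preserves right extensions by zero.

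Since both parts reduce to invoking prior results, there is no real obstacle; the only subtlety is the small verification that ``right exact of pointed derivators'' implies ``pointed'', which is a one-line consequence of the coincidence of initial and zero objects in the pointed setting.
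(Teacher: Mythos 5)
Your proposal is correct and matches the paper's own proof, which likewise derives part (i) directly from \autoref{thm:rex-general} and part (ii) from \autoref{cor:initial-cosieve}. The small bridging observation you make explicit --- that a right exact morphism of pointed derivators preserves initial objects and hence zero objects, so it is pointed --- is exactly the implicit step the paper relies on.
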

\begin{proof}
This is immediate from~\autoref{thm:rex-general} and \autoref{cor:initial-cosieve}.
\end{proof}

\begin{thm}\label{thm:exact}
Every exact morphism of stable derivators preserves
\begin{enumerate}
\item left homotopy finite left Kan extensions,
\item left extensions by zero,
\item right homotopy finite right Kan extensions, and 
\item right extensions by zero.
\end{enumerate}
\end{thm}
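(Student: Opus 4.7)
The strategy is to reduce everything to \autoref{thm:rex-pointed} via the standard duality. Statements (i) and (iv) are immediate: an exact morphism is by definition right exact, so \autoref{thm:rex-pointed} directly yields preservation of left homotopy finite left Kan extensions and of right extensions by zero.

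For (ii) and (iii), the plan is to pass to opposites. Since $F$ is exact, it is in particular left exact, hence $F\op\colon\D\op\to\E\op$ preserves initial objects and pushouts, i.e., is right exact. Moreover, the opposite of a stable derivator is stable (stability is a self-dual condition), so $F\op$ is a right exact morphism of stable derivators. Applying \autoref{thm:rex-pointed} to $F\op$ gives that $F\op$ preserves left homotopy finite left Kan extensions and right extensions by zero. By \autoref{lem:dual-prin-mor}, preservation of left Kan extensions along $u\op$ by $F\op$ is the same as preservation of right Kan extensions along $u$ by $F$. It remains to observe that $u\colon A\to B$ is right homotopy finite (in the sense of \autoref{defn:left-homotopy-finite}) if and only if $u\op\colon A\op\to B\op$ is left homotopy finite: under opposites the slice $(u\op/b)$ is isomorphic to $(b/u)\op$, the nerve of $C\op$ is the opposite simplicial set of $NC$ so homotopy finiteness is preserved, and a functor is homotopy final if and only if its opposite is homotopy cofinal. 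This yields (iii). The same passage to opposites interchanges cosieves and sieves, and hence left extensions by zero for $F$ correspond to right extensions by zero for $F\op$, yielding (ii).

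The only mild subtlety is bookkeeping around the duality: one must confirm that the class of exact morphisms of stable derivators is closed under $F\mapsto F\op$ (which is recorded in \autoref{egs:exact-mor}), and that the notions of (left/right) homotopy finite functor and of (left/right) extension by zero are interchanged under opposites in the expected way. Once these translations are in place, the theorem is simply a repackaging of \autoref{thm:rex-pointed} together with \autoref{prop:exact} and \autoref{lem:dual-prin-mor}, with no further calculation required.
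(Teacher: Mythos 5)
Your argument is correct and is exactly the paper's proof: the paper disposes of the theorem with ``immediate from \autoref{thm:rex-pointed} and its dual,'' and your passage to opposites (using that an exact morphism is both right and left exact, that stability and exactness are self-dual, and that left/right homotopy finiteness and cosieves/sieves are interchanged under $(-)\op$) is precisely the spelled-out content of ``its dual.'' No gaps.
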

\begin{proof}
This is immediate from~\autoref{thm:rex-pointed} and its dual.
\end{proof}

\begin{rmk}
\begin{enumerate}
\item Of course, the subcases (ii) and (iv) in \autoref{thm:exact} are redundant but are mentioned in order to emphasize them. 
\item These three theorems apply rather frequently since many typical constructions satisfy the above finiteness assumptions; see for example \cite{gst:basic,gst:tree,gst:Dynkin-A,gst:acyclic}. Additional applications of these theorems will also appear in the sequel \cite{groth:char}.
\item Let $\cDER_{\mathrm{St,ex}}$ be the $2$-category of stable derivators, exact morphisms, and all natural transformations, and let $u\colon A\to B$ be a left homotopy finite functor. By \autoref{thm:exact} the lax natural transformation 
\[
u_!\colon(-)^A\to(-)^B\colon\cDER_{\mathrm{St,ex}}\to\cDER_{\mathrm{St,ex}}
\]
which is the restriction of the lax natural transformation from \autoref{lem:lax-lkan} to $\cDER_{\mathrm{St,ex}}$ is a pseudo-natural transformation. There is variant of this for right Kan extensions along right homotopy finite functors.
\end{enumerate}
\end{rmk}

\section{The canonicity of canonical triangulations}
\label{sec:can-triang}

The values of strong, stable derivators can be turned into triangulated categories, and we refer to these triangulations as \textbf{canonical triangulations}; see \cite{franke:adams,maltsiniotis:seminar} or \cite[\S4.2]{groth:ptstab}. Our next goal is to show that these triangulations are $2$-functorial with respect to exact morphisms and arbitrary transformations. 

Given a strong, stable derivator, this specializes to canonical exact structures on restriction and Kan extension functors, and there is also a $2$-categorical variant of this statement. These results lead to a uniqueness statement for canonical triangulations, thereby justifying the terminology. Moreover, there are variants for canonical higher triangulations \cite{beilinson:perverse,maltsiniotis:higher,gst:Dynkin-A}.

\begin{defn}\label{defn:exact-functor}
Let $\cT$ and $\cT'$ be triangulated categories. An \textbf{exact functor} $\cT\to\cT'$ is a pair $(F,\sigma)$ consisting of 
\begin{enumerate}
\item an additive functor $F\colon\cT\to\cT'$ and 
\item a natural transformation $\sigma\colon F\Sigma\to\Sigma F$
\end{enumerate}
such that for every distinguished triangle $X\stackrel{f}{\to} Y\stackrel{g}{\to} Z\stackrel{h}{\to}\Sigma X$ in~$\cT$ the image triangle $FX\stackrel{Ff}{\to} FY\stackrel{Fg}{\to} FZ\stackrel{\sigma\circ Fh}{\to}\Sigma FX$ is distinguished in~$\cT'$. The natural transformation $\sigma$ is an \textbf{exact structure} on $F$.
\end{defn}

\begin{con}\label{con:exact-exact}
For every exact morphism $F\colon\D\to\E$ of stable derivators and $A\in\cCat$ we construct a natural isomorphism 
\begin{equation}\label{eq:exact-str}
\sigma=\sigma_A\colon F_A\circ\Sigma\toiso\Sigma\circ F_A.
\end{equation}
Passing to shifted derivators, we can assume that $A=\bbone$ (\autoref{egs:exact-mor}). Recall that for every pointed morphism we constructed a canonical natural transformation \eqref{eq:pointed-sigma-omega} pointing in the opposite direction, which we know to be invertible for right exact morphisms (\autoref{prop:rex-susp}). We define \eqref{eq:exact-str} to be the inverse of \eqref{eq:pointed-sigma-omega} and refer to it as the \textbf{canonical exact structure} on $F_A$.
\end{con}

This terminology is justified by \autoref{thm:exact-exact}. As a preparation we make the following construction.

\begin{con}\label{con:comp-susp-sq}
Let \D be a pointed derivator and let $X\in\D(\square)$ be such that $X_{1,0}\cong X_{0,1}\cong 0$,
\[
\xymatrix{
X_{0,0}\ar[r]\ar[d]&0\ar[d]\\
0\ar[r]&X_{1,1}.
}
\]
Denoting by $i=(0,0)\colon\bbone\to\ulcorner$ the sieve classifying the initial object, it follows from \cite[Prop.~3.6]{groth:ptstab} that $i_\ulcorner^\ast X\in\D(\ulcorner)$ lies in the essential image of $i_\ast$ which is to say that the unit $\eta\colon i_\ulcorner^\ast X\to i_\ast i^\ast i_\ulcorner^\ast X$ is an isomorphism. This allows us to form the natural transformation
\[
\Phi\colon (i_\ulcorner)_!i_\ast i^\ast i_\ulcorner^\ast X\stackrel{\eta^{-1}}{\to}(i_\ulcorner)_!i_\ulcorner^\ast X\stackrel{\varepsilon}{\to} X
\]
on the full subcategory $\D(\square)^\ex\subseteq\D(\square)$ spanned by all diagrams satisfying this vanishing condition. Since the suspension is defined by $\Sigma=(1,1)^\ast(i_\ulcorner)_!(0,0)_\ast$, an evaluating at $(1,1)$ yields
\begin{equation}\label{eq:comp-susp-sq}
\varphi=\Phi_{1,1}\colon \Sigma (X_{0,0})\to X_{1,1}.
\end{equation}
By the fully faithfulness of $i_\ulcorner$, this transformation is invertible on \textbf{suspension squares}, i.e., on squares $X\in\D(\square)^\ex$ which additionally are cocartesian.
\end{con}

\begin{thm}\label{thm:exact-exact}
Let $F\colon\D\to\E$ be an exact morphism of strong, stable derivators and let $A\in\cCat$. The isomorphism \eqref{eq:exact-str} turns $F_A\colon\D(A)\to\E(A)$ into an exact functor with respect to canonical triangulations.
\end{thm}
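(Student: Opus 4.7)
The plan is to reduce to the case $A=\bbone$ by passing to the shifted derivators $\D^A$ and $\E^A$; the induced morphism $F^A$ is again exact (\autoref{egs:exact-mor}), and the exact structure $\sigma_A$ on $F_A$ is precisely the exact structure on $F^A$ evaluated at $\bbone$. Additivity of the underlying functor is automatic, since an exact morphism of stable derivators is in particular right exact and hence preserves finite coproducts (\autoref{prop:rex-fin-cop}), so $F_\bbone$ preserves biproducts. Naturality of $\sigma_\bbone=\psi^{-1}$ is immediate from \autoref{con:pointed-susp-omega}.

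Next, I would recall the description of distinguished triangles. By definition, a distinguished triangle in $\D(\bbone)$ is isomorphic to one arising from a coherent iterated cofiber sequence $C\in\D(\boxbar)$ produced from a morphism $X\in\D([1])$; the two $2\times 2$ subsquares are cocartesian with zeros in the upper right and lower left, so the outer square $C|_{\{0,2\}\times[1]}$ is a suspension square in the sense of \autoref{con:comp-susp-sq} and yields a canonical isomorphism $\varphi_C\colon\Sigma C_{0,0}\toiso C_{2,1}$. The connecting map of the induced triangle on $\D(\bbone)$ is the composite of the coherent morphism $C_{1,1}\to C_{2,1}$ with $\varphi_C^{-1}$.

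The next step is to apply $F$ to the whole diagram. Since $F$ is right exact, \autoref{prop:rex-basic-con} ensures that $F(C)\in\E(\boxbar)$ is again a coherent iterated cofiber sequence, now built from $F(X)\in\E([1])$; in particular its outer square is a suspension square for $F(C_{0,0})$, giving $\varphi_{F(C)}\colon\Sigma F(C_{0,0})\toiso F(C_{2,1})$. Thus the underlying triangle of $F(C)$ is distinguished in $\E(\bbone)$, with connecting map $\varphi_{F(C)}^{-1}\circ F(C_{1,1}\to C_{2,1})$.

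The heart of the argument, and the main obstacle, is the compatibility
\[
\varphi_{F(C)}^{-1}=\sigma\circ F(\varphi_C^{-1}),\qquad\text{equivalently}\qquad F(\varphi_C)=\varphi_{F(C)}\circ\psi,
\]
because this is what identifies $F$ applied to the original triangle with connecting map twisted by $\sigma$ with the distinguished triangle arising from $F(C)$. Both $\varphi_{(-)}$ (\autoref{con:comp-susp-sq}) and $\psi$ (\autoref{con:pointed-susp-omega}) are defined as pastings built from units and counits of $((0,0)_!,(0,0)^\ast)$ and $((i_\ulcorner)_!,i_\ulcorner^\ast)$ together with the pseudo-naturality constraints of $F$, evaluated at $(1,1)^\ast$. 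The plan is to expand both sides as canonical mates of two pasting diagrams that agree on the nose and invoke the functoriality of mates with pasting, together with \autoref{lem:units-vs-morphisms} and the fully faithfulness of $i_\ulcorner$ (which makes the unit $\eta\colon i_\ulcorner^\ast X\to i_\ast i^\ast i_\ulcorner^\ast X$ used in \autoref{con:comp-susp-sq} coincide with the appropriate restriction of the unit used in the construction of $\sigma$). Once this identity is checked, the image triangle is distinguished in $\E(\bbone)$, completing the proof.
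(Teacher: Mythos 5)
Your proposal is correct and follows essentially the same route as the paper: reduce to $A=\bbone$ via shifting, get additivity from \autoref{prop:rex-fin-cop}, use \autoref{prop:rex-basic-con} to see that $F$ carries coherent cofiber sequences to coherent cofiber sequences, and then reduce everything to the single compatibility between $\varphi$, $\sigma$, and the pseudo-naturality constraints, which the paper verifies by exactly the expansion you describe (naturality squares plus two applications of \autoref{lem:units-vs-morphisms}, displayed in \autoref{fig:exact-exact}). The only minor imprecision is that your displayed identity $\varphi_{F(C)}^{-1}=\sigma\circ F(\varphi_C^{-1})$ suppresses the constraints $\gamma$ mediating between $(1,1)^\ast FX$ and $F(1,1)^\ast X$ (and between $\Sigma(0,0)^\ast FX$ and $\Sigma F(0,0)^\ast X$), and that the invertibility of the unit $\eta\colon i_\ulcorner^\ast X\to i_\ast i^\ast i_\ulcorner^\ast X$ comes from the vanishing condition on $X$ rather than from the fully faithfulness of $i_\ulcorner$, which instead makes the counit invertible on cocartesian squares.
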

\begin{proof}
The functor $F_A\colon\D(A)\to\E(A)$ is additive by \autoref{prop:rex-fin-cop} and it hence remains to show that \eqref{eq:exact-str} defines an exact structure. Passing to shifted derivators we assume without loss of generality that $A=\bbone$, and it suffices to show that $F=F_\bbone\colon\D(\bbone)\to\E(\bbone)$ and $\sigma=\sigma_\bbone\colon F\circ\Sigma\cong\Sigma\circ F$ as in \eqref{eq:exact-str} send \emph{standard} distinguished triangles in $\D(\bbone)$ to distinguished triangles in $\E(\bbone)$.

To this end, let $Q\in\D(\boxbar)$ be a coherent cofiber sequence and let us consider the corresponding standard triangle
\[
(0,0)^\ast Q\to (1,0)^\ast Q\to (1,1)^\ast Q\to (2,1)^\ast Q\stackrel{\varphi^{-1}}{\to} \Sigma (0,0)^\ast Q,
\]
where $\varphi$ is as in \eqref{eq:comp-susp-sq}. In order to show that the associated image triangle in $\E(\bbone)$ is distinguished we pass to the following diagram
\[
\xymatrix@-1.4pc{
F(0,0)^\ast Q\ar[r]&F(1,0)^\ast Q\ar[r]&F(1,1)^\ast Q\ar[r]&F(2,1)^\ast Q\ar[r]^-{\varphi^{-1}}&F\Sigma (0,0)^\ast Q\ar[r]^-\sigma&\Sigma F(0,0)^\ast Q\\
(0,0)^\ast FQ\ar[u]^-\gamma_-\cong\ar[r]&(1,0)^\ast FQ\ar[u]^-\gamma_-\cong\ar[r]&(1,1)^\ast FQ\ar[u]_-\gamma^-\cong\ar[r]&(2,1)^\ast FQ\ar[u]_-\gamma^-\cong\ar[r]_-{\varphi^{-1}}&\Sigma (0,0)^\ast FQ.\ar[ru]^-\cong_-{\Sigma\gamma}
}
\]
In this diagram the vertical morphisms are pseudo-naturality isomorphisms, and the three squares to the left hence commute. The remaining morphisms are instances of inverses of \eqref{eq:comp-susp-sq} and the claimed exact structure $\sigma$. In fact, we consider the inverse of \eqref{eq:comp-susp-sq} for the cofiber sequence $Q$ and also for $FQ$, which is again a cofiber sequence since $F$ is exact (\autoref{prop:rex-basic-con}). To conclude the proof it remains to show that the quadrilateral on the right commutes. Writing $X=\iota_{02}Q\in\D(\square)$ for the corresponding suspension square, this amounts to showing that
\begin{equation}\label{eq:exact-exact}
\vcenter{
\xymatrix{
F(1,1)^\ast X\ar[r]^-{\varphi^{-1}}&F\Sigma (0,0)^\ast X\ar[r]^-\sigma&\Sigma F(0,0)^\ast X\\
(1,1)^\ast FX\ar[u]^-\gamma_-\cong\ar[r]_-{\varphi^{-1}}&\Sigma (0,0)^\ast FX\ar[ru]^-\cong_-\gamma
}
}
\end{equation}
commutes. Unraveling the definition of $\varphi$ and $\sigma$ (compare to \autoref{con:comp-susp-sq}, \autoref{con:exact-exact}, and \autoref{con:pointed-susp-omega}), it suffices to show that \autoref{fig:exact-exact} commutes. 
  \begin{figure}
    \centering
    \[\xymatrix{
F(X_{1,1})\ar@{=}[d]&&F\Sigma(X_{0,0})\ar@{=}[d]\\
F(1,1)^\ast X\ar@{}[dr]|{=}&F(1,1)^\ast(i_\ulcorner)_!(i_\ulcorner)^\ast X\ar[l]_-\varepsilon^-\cong\ar[r]^-\eta_-\cong\ar@{}[dr]|{=}&F(1,1)^\ast(i_\ulcorner)_!i_\ast i^\ast i_\ulcorner^\ast X\\
(1,1)^\ast F X\ar[u]^-\gamma_-\cong&(1,1)^\ast F(i_\ulcorner)_!i_\ulcorner^\ast X\ar[l]_-\varepsilon^-\cong\ar[r]^-\eta_-\cong\ar[u]^-\gamma_-\cong\ar@{}[dr]|{=}&(1,1)^\ast F(i_\ulcorner)_!i_\ast i^\ast i_\ulcorner^\ast X\ar[u]_-\gamma^-\cong\\
(1,1)^\ast (i_\ulcorner)_! i_\ulcorner^\ast FX\ar[u]^-\varepsilon_-\cong\ar@{}[dr]|{=}\ar[r]_-\gamma^-\cong\ar[d]_-\eta^-\cong&(1,1)^\ast (i_\ulcorner)_!F i_\ulcorner^\ast X\ar[r]_-\eta^-\cong\ar[u]_-\cong\ar[d]_-\eta^-\cong&(1,1)^\ast (i_\ulcorner)_!F i_\ast i^\ast i_\ulcorner^\ast X\ar[u]^-\cong\ar[d]_-\cong\\
(1,1)^\ast (i_\ulcorner)_!i_\ast i^\ast i_\ulcorner^\ast FX\ar[r]_-\gamma^-\cong&(1,1)^\ast (i_\ulcorner)_! i_\ast i^\ast F i_\ulcorner^\ast X\ar[r]_-\gamma^-\cong&(1,1)^\ast (i_\ulcorner)_! i_\ast F i^\ast i_\ulcorner^\ast X\\
\Sigma\big( (FX)_{0,0}\big)\ar@{=}[u]&&\Sigma F(X_{0,0})\ar@{=}[u]
      }\]
    \caption{The remaining quadrilateral also commutes.}
    \label{fig:exact-exact}
  \end{figure}
In this diagram the squares labeled by an equality sign commute as naturality squares, while the remaining two squares commute by two applications of \autoref{lem:units-vs-morphisms}.
\end{proof}

\begin{rmk}
With the exception of the slightly more involved direct verification that \eqref{eq:exact-exact} commutes (by means of \autoref{fig:exact-exact}), the proof of \autoref{thm:exact-exact} is completely straightforward. It turns out that there is a way to formalize formulas relative to $2$-categories of derivators and this makes such direct verifications obsolete. We will come back to this in \cite{groth:formal}.
\end{rmk}

\begin{rmk}\label{rmk:triang-canonical}
This result offers the following justification of the terminology \emph{canonical} triangulations. Note that the construction of canonical triangulations depends on certain choices, for example on the choice of a suspension functor which in turn relies on the choice of certain Kan extension functors. Let \D be a strong stable derivator, let $A\in\cCat$, and let us consider the exact identity morphism $\id\colon\D\to\D$. It follows from the proof of \autoref{thm:exact-exact} that if we endow $\D(A)$ with two different canonical triangulations, then the identity functor $\id\colon\D(A)\to\D(A)$ can be turned into an exact isomorphism with respect to these triangulations. 

Unraveling definitions one observes that this exact structure is obtained by combining total mates of identity transformations, i.e., by those natural transformations showing that the independently chosen Kan extensions are isomorphic. And in this sense also this exact structure is canonical.
\end{rmk}

An additional justification of the terminology is given by the following result (see also \autoref{thm:triang-2-fun} for a more systematic variant).

\begin{cor}\label{cor:triang-can}
Let \D be a strong, stable derivator and let $u\colon A\to B$ be in $\cCat$. The functors $u^\ast\colon\D(B)\to\D(A),u_!\colon\D(A)\to\D(B),$ and $u_\ast\colon\D(A)\to\D(B)$ can be turned into exact functors with respect to canonical triangulations.
\end{cor}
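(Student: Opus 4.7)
The plan is to obtain each of $u^\ast$, $u_!$, and $u_\ast$ as the underlying functor at $\bbone$ of an exact morphism between the shifted stable derivators $\D^B$ and $\D^A$, and then to quote \autoref{thm:exact-exact}. Recall that if \D is a strong, stable derivator then so are the shifts $\D^A$ and $\D^B$, and that by definition $\D^A(\bbone)=\D(A)$ and $\D^B(\bbone)=\D(B)$ come equipped with canonical triangulations.

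First I would recall that restriction along $u\colon A\to B$ assembles into a morphism of derivators $u^\ast\colon\D^B\to\D^A$, whose components at $C\in\cCat$ are given by $(u\times\id_C)^\ast\colon\D(B\times C)\to\D(A\times C)$. By \autoref{prop:restriction-continuous} this morphism is continuous and cocontinuous, hence it admits both adjoint morphisms of derivators, namely $u_!\dashv u^\ast\dashv u_\ast$ with $u_!,u_\ast\colon\D^A\to\D^B$. Now \autoref{cor:adjoint-exact} applies: left and right adjoint morphisms between stable derivators are exact, so all three morphisms $u^\ast$, $u_!$, $u_\ast$ are exact morphisms of stable derivators (indeed, $u^\ast$ is both a left and a right adjoint).

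Applying \autoref{thm:exact-exact} to these three exact morphisms at the object $\bbone\in\cCat$ equips each of the underlying functors $u^\ast\colon\D(B)\to\D(A)$, $u_!\colon\D(A)\to\D(B)$, $u_\ast\colon\D(A)\to\D(B)$ with a canonical exact structure, as built in \autoref{con:exact-exact} from the (invertible) comparison map \eqref{eq:pointed-sigma-omega}. This yields exact functors with respect to the canonical triangulations on $\D(A)=\D^A(\bbone)$ and $\D(B)=\D^B(\bbone)$, which is exactly the claim.

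The only genuine content beyond bookkeeping is the observation that $u^\ast$, $u_!$, and $u_\ast$ really are morphisms of the shifted derivators (not merely functors on the values at $\bbone$), and that the adjunctions hold at the level of derivators; this is a standard consequence of the pseudo-functoriality of $(-)^{(-)}\colon\cCat\op\times\cDER\to\cDER$ already used in \S\ref{sec:paras}, so no new obstacle arises. If anything warrants attention, it is the verification that the canonical triangulation on $\D(A)$ obtained from $\D^A$ agrees with the one implicitly used in the statement, but this is a matter of unpacking definitions.
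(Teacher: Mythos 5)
Your proposal is correct and follows essentially the same route as the paper: the paper likewise invokes the parametrized adjunctions $(u_!,u^\ast)\colon\D^A\rightleftarrows\D^B$ and $(u^\ast,u_\ast)\colon\D^B\rightleftarrows\D^A$ of morphisms of strong, stable derivators, applies \autoref{cor:adjoint-exact} to conclude all three are exact, and then quotes \autoref{thm:exact-exact}. The only cosmetic difference is that you motivate the existence of the adjoint morphisms via (co)continuity of $u^\ast$, whereas the paper simply cites the calculus of parametrized Kan extensions, which is the actual source of these adjunctions.
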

\begin{proof}
The calculus of parametrized Kan extensions yields adjunctions of strong, stable derivators
\[
(u_!,u^\ast)\colon\D^A\rightleftarrows\D^B\qquad\text{and}\qquad(u^\ast,u_\ast)\colon\D^B\rightleftarrows\D^A,
\]
given by restriction and Kan extension morphisms. These three morphisms are exact by \autoref{cor:adjoint-exact} and the underlying functors can hence canonically be turned into exact functors (\autoref{thm:exact-exact}). 
\end{proof}

\autoref{thm:exact-exact} also has a variant for natural transformations, and to make it precise we recall the following definition.

\begin{defn}\label{defn:exact-trafo}
Let $F,G\colon\cT\to\cT'$ be exact functors between triangulated categories. A natural transformation $\alpha\colon F\to G$ is \textbf{exact} if the following diagram commutes,
\[
\xymatrix{
F\circ\Sigma\ar[r]\ar[d]_-\alpha&\Sigma\circ F\ar[d]^-\alpha\\
G\circ\Sigma\ar[r]&\Sigma\circ G.
}
\]
\end{defn}

At the level of derivators there is no corresponding concept, since this compatibility is automatic.

\begin{cor}\label{cor:exact-2-cell}
Let $F,G\colon\D\to\E$ be exact morphisms of strong stable derivators, let $\alpha\colon F\to G$ be a natural transformation, and let $A\in\cCat$. The natural transformation $\alpha_A\colon F_A\to G_A$ is exact with respect to the canonical exact structures \eqref{eq:exact-str}.
\end{cor}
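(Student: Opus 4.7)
The plan is to reduce to the case $A=\bbone$ by a shifting argument and then invoke \autoref{prop:ptd-comp-susp-natural}. First I would note that exact morphisms of stable derivators are closed under shifting (\autoref{egs:exact-mor}), so $F^A,G^A\colon\D^A\to\E^A$ are again exact, and $\alpha$ gives rise to a natural transformation $\alpha^A\colon F^A\to G^A$ between them. By the very construction of $\sigma_A$ in \autoref{con:exact-exact}, the canonical exact structure on $F_A\colon\D(A)\to\E(A)$ is the exact structure $\sigma_\bbone$ of the underlying functor of $F^A$, and analogously for $G$ and $\alpha$. Hence it suffices to verify the claim when $A=\bbone$.

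With this reduction, the exact structure $\sigma\colon F\circ\Sigma\to\Sigma\circ F$ is by definition the inverse of the canonical comparison $\psi\colon\Sigma\circ F\to F\circ\Sigma$ of \eqref{eq:pointed-sigma-omega}, which is an isomorphism for right exact morphisms by \autoref{prop:rex-susp}. The exactness condition of \autoref{defn:exact-trafo} amounts to the commutativity of
\[
\xymatrix{
F\circ\Sigma\ar[r]^-{\sigma}\ar[d]_-{\alpha}&\Sigma\circ F\ar[d]^-{\alpha}\\
G\circ\Sigma\ar[r]_-{\sigma}&\Sigma\circ G,
}
\]
which, upon inverting the horizontal arrows (possible since $\psi$ is invertible), is equivalent to the commutativity of the analogous square with $\psi$ in place of $\sigma$ and the horizontal arrows reversed. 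But that latter square is precisely the one shown to commute in \autoref{prop:ptd-comp-susp-natural}.

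The genuine work has already been done: \autoref{prop:rex-susp} provides the invertibility of $\psi$ for right exact morphisms and \autoref{prop:ptd-comp-susp-natural} establishes the naturality of $\psi$ in transformations between pointed morphisms. The only point requiring a moment's care is the identification of $\sigma_A$ with the underlying data of $\sigma_\bbone$ on the shifted derivator, but this is built into the shifting step in \autoref{con:exact-exact}; no further obstacle is expected.
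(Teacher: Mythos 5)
Your argument is correct and follows the paper's proof exactly: the paper likewise reduces to $A=\bbone$ by passing to shifted derivators and then declares the result immediate from \autoref{prop:ptd-comp-susp-natural}. Your additional remark that one must invert the horizontal arrows (legitimate since the canonical transformation \eqref{eq:pointed-sigma-omega} is invertible for right exact morphisms by \autoref{prop:rex-susp}) simply spells out what the paper leaves implicit.
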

\begin{proof}
Passing to shifted derivators, we can again assume that $A=\bbone$ and in this case the result is immediate from \autoref{prop:ptd-comp-susp-natural}.
\end{proof}

\begin{cor}\label{cor:triang-can-II}
Let \D be a strong stable derivator and let $\alpha\colon u\to v$ be a natural transformation in $\cCat$. The induced transformation $\alpha^\ast\colon u^\ast\to v^\ast$ between $u^\ast,v^\ast\colon\D(B)\to\D(A)$ is exact with respect to the canonical exact structures from \autoref{cor:triang-can}.
\end{cor}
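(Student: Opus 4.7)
The plan is to reduce this to Corollary~\ref{cor:exact-2-cell} by promoting everything from ordinary functors to morphisms of derivators. First, the natural transformation $\alpha\colon u\to v$ in $\cCat$ gives, for every $C\in\cCat$, a natural transformation $\alpha\times\id_C\colon u\times\id_C\to v\times\id_C$, and restriction along these assembles into a modification $\alpha^\ast\colon u^\ast\to v^\ast$ between the restriction morphisms of derivators $u^\ast,v^\ast\colon\D^B\to\D^A$. (The coherence/modification axioms are immediate from the fact that restriction is strictly $2$-functorial in the shape variable, so composition with further restrictions commutes on the nose with composition with $\alpha^\ast$.) Second, since $\D$ is strong and stable, so are the shifted derivators $\D^A$ and $\D^B$, and the restriction morphisms $u^\ast,v^\ast$ are continuous and cocontinuous by \autoref{prop:restriction-continuous}, hence exact. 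Thus the hypotheses of \autoref{cor:exact-2-cell} are satisfied.

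Now I apply \autoref{cor:exact-2-cell} to the natural transformation $\alpha^\ast\colon u^\ast\to v^\ast$ between the exact morphisms $u^\ast,v^\ast\colon\D^B\to\D^A$, at the category $\bbone\in\cCat$. This yields that the component $(\alpha^\ast)_\bbone$ is exact with respect to the canonical exact structures on $(u^\ast)_\bbone$ and $(v^\ast)_\bbone$ produced from \autoref{con:exact-exact} applied to the morphisms $u^\ast,v^\ast\colon\D^B\to\D^A$. Unravelling the shifted derivators at $\bbone$, the functor $(u^\ast)_\bbone\colon\D^B(\bbone)\to\D^A(\bbone)$ is simply $u^\ast\colon\D(B)\to\D(A)$, similarly for $v^\ast$, and the component $(\alpha^\ast)_\bbone$ is the induced natural transformation $\alpha^\ast\colon u^\ast\to v^\ast$ from the statement.

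It remains to match the two notions of ``canonical exact structure'' on $u^\ast$ (and $v^\ast$). But this match is essentially by definition: the exact structure on the functor $u^\ast\colon\D(B)\to\D(A)$ promised by \autoref{cor:triang-can} is obtained by first viewing $u^\ast\colon\D^B\to\D^A$ as an exact morphism of strong stable derivators and then invoking \autoref{thm:exact-exact} at $\bbone$, which is exactly the construction in \autoref{con:exact-exact} applied at $\bbone$. Hence the exact structures against which \autoref{cor:exact-2-cell} certifies $\alpha^\ast$ to be exact are precisely those of \autoref{cor:triang-can}, completing the proof.

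The only real obstacle is the bookkeeping step of verifying that the two descriptions of the canonical exact structure on $u^\ast\colon\D(B)\to\D(A)$ agree, and that $\alpha^\ast$ genuinely defines a modification (and not merely a pointwise-at-$\bbone$ natural transformation). Both reduce to the strict $2$-functoriality of restriction in the shape variable and the compatibility of pseudo-naturality constraints of $u^\ast$ and $v^\ast$ with $\alpha^\ast$; neither involves any new homotopy-theoretic input beyond what \autoref{cor:exact-2-cell} already packages.
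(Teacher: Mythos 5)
Your proposal is correct and follows essentially the same route as the paper: promote $\alpha^\ast$ to a natural transformation between the exact restriction morphisms $u^\ast,v^\ast\colon\D^B\to\D^A$ and then apply \autoref{cor:exact-2-cell} with an evaluation at $\bbone$. The extra bookkeeping you supply (the modification axioms and the matching of exact structures) is implicit in the paper's two-line argument.
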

\begin{proof}
There is a natural transformation $\alpha^\ast\colon u^\ast\to v^\ast$ between the exact restriction morphisms $u^\ast,v^\ast\colon\D^B\to\D^A$. Hence, the statement follows from \autoref{cor:exact-2-cell} and an evaluation at the category~$A=\bbone$.
\end{proof}

The exact structures constructed in \autoref{thm:exact-exact} are functorial in exact morphisms. To formulate this in a special case more concisely, we denote by $\cTriaCAT$ the $2$-category of triangulated categories, exact functors, and exact natural transformations. This $2$-category comes with a forgetful $2$-functor $\cTriaCAT\to\cCAT.$

\begin{thm}\label{thm:triang-2-fun}
Every strong, stable derivator $\D\colon\cCat\op\to\cCAT$ admits a lift against the forgetful $2$-functor $\cTriaCAT\to\cCAT,$
\[
\xymatrix{
&\cTriaCAT\ar[d]\\
\cCat\op\ar[r]_-\D\ar@{-->}[ru]^-{\exists\D}&\cCAT,
}
\]
given by endowing $\D(A),A\in\cCat,$ with canonical triangulations.
\end{thm}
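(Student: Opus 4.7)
The plan is to assemble the results already established in \S\ref{sec:can-triang}. For each $A \in \cCat$ the category $\D(A)$ carries its canonical triangulation, supplying the assignment on $0$-cells. For each functor $u\colon B \to A$ in $\cCat$, I would invoke \autoref{cor:triang-can}: the restriction morphism $u^\ast\colon \D^A \to \D^B$ is a left (and right) adjoint, hence exact by \autoref{cor:adjoint-exact}, and therefore the underlying functor $u^\ast\colon \D(A) \to \D(B)$ acquires a canonical exact structure $\sigma_{u^\ast} = \psi_{u^\ast}^{-1}$ by \autoref{thm:exact-exact} and \autoref{con:exact-exact}. For each natural transformation $\alpha\colon u \to v$ in $\cCat$, the induced transformation $\alpha^\ast\colon v^\ast \to u^\ast$ is an exact natural transformation by \autoref{cor:triang-can-II}.

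Once these ingredients are assembled, the remaining content is verification of the $2$-functor axioms for the resulting assignment $\D\colon \cCat\op \to \cTriaCAT$. On $2$-cells there is nothing further to check, since exactness of a natural transformation is a property rather than additional data (\autoref{defn:exact-trafo}); identities, vertical composition, and horizontal composition of $2$-cells are therefore inherited directly from the underlying $2$-functor $\D\colon \cCat\op \to \cCAT$. For the identity axiom on $1$-cells, I would note that when $u = \id_A$ the canonical transformation $\psi$ of \autoref{con:pointed-susp-omega} built from units, counits, and pseudo-naturality isomorphisms of the identity morphism collapses to the identity of $\Sigma$ via the triangular identities, so its inverse $\sigma_{\id^\ast}$ is the identity exact structure, as required in $\cTriaCAT$.

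The one step demanding genuine verification is compatibility with composition: for $u\colon C \to B$ and $v\colon B \to A$, with $(vu)^\ast = u^\ast v^\ast$ on the nose, I must check that the canonically chosen exact structure on the left matches the composite exact structure in $\cTriaCAT$, i.e.\ that $\psi_{(vu)^\ast} = u^\ast \psi_{v^\ast} \circ \psi_{u^\ast} v^\ast$. The plan is to derive this from the functoriality of mates with respect to pasting, applied to each of the three steps in the definition $\Sigma = (1,1)^\ast (i_\ulcorner)_! (0,0)_\ast$, together with the fact that the pseudo-naturality constraints of a composite morphism decompose via $\gamma_{GF,w} = G\gamma_{F,w} \circ \gamma_{G,w}F$. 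I expect this diagram chase to be the main obstacle, in the spirit of \autoref{fig:exact-exact}, but it is entirely formal and contributes no conceptual content beyond what is already available. Assembling these pieces gives a strict $2$-functor $\D\colon \cCat\op \to \cTriaCAT$ lifting $\D$ along the forgetful $\cTriaCAT \to \cCAT$.
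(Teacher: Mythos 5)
Your proposal is correct and follows essentially the same route as the paper: canonical triangulations on objects, the canonical exact structures on restriction functors via \autoref{cor:triang-can} and \autoref{thm:exact-exact}, exactness of induced $2$-cells via \autoref{cor:triang-can-II}, and compatibility with composition from the functoriality of mates with pasting. The only cosmetic difference is at identities, where the paper simply \emph{chooses} the trivial exact structure on $\id^\ast$ while you verify that the canonical one collapses to it via the triangular identities; both are fine.
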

\begin{proof}
For every $A\in\cCat$ we choose a canonical triangulation on $\D(A)$ \cite[\S4.2]{groth:ptstab}. Given a functor $u\colon A\to B$, the restriction functor $u^\ast\colon\D(B)\to\D(A)$ is exact (\autoref{cor:triang-can}). In fact, we endow it with the canonical exact structure constructed in the proof of \autoref{thm:exact-exact}, while we choose $\id^\ast\colon\D(A)\to\D(A)$ to be endowed with the trivial exact structure. The definition of composition of exact functors of triangulated categories and the functoriality of mates with respect to pasting show that this construction is compatible with compositions. \autoref{cor:triang-can-II} concludes the proof.
\end{proof}

\begin{rmk}
These lifts of strong, stable derivators to the $2$-category $\cTriaCAT$ are themselves $2$-functorial. In fact, if we denote by $\cDER_{\mathrm{St,strong,ex}}$ the $2$-category of strong, stable derivators, exact morphisms, and all natural transformations, then choosing canonical triangulations for all strong, stable derivators yields a $2$-functor
\[
\cDER_{\mathrm{St,strong,ex}}\to\cTriaCAT^{\cCat\op}.
\]
Here, $\cTriaCAT^{\cCat\op}$ denotes the $2$-category of $2$-functors $\cCat\op\to\cTriaCAT$, exact pseudo-natural transformations, and exact modifications.
\end{rmk}

\begin{rmk}\label{rmk:triang-concise}
There are variants of the results of this section for canonical strong triangulations (\cite{beilinson:perverse,maltsiniotis:higher,gst:Dynkin-A}). In particular, a strong stable derivator also admits a lift against the forgetful functor from the $2$-category of strongly triangulated categories, exact functors, and exact natural transformations. The details are very similar to the case of ordinary triangulations and are left to the interested reader.
\end{rmk}

\begin{rmk}
Let us recall that one way to think of derivators is as some kind of weakly final approach to abstract homotopy theories. Quillen model categories and complete and cocomplete $\infty$-categories have underlying homotopy derivators (see \cite{cisinski:direct} in the first case and \cite{gps:mayer} for a sketch proof in the second case). And it is expected that there are variants of these results for other axiomatizations of $(\infty,1)$-categories. Conjecturally, these assignments preserve stable homotopy theories and exact morphisms of stable homotopy theories. Hence, once these transitions are understood in more detail, the results of this section have implications for these other approaches as well.
\end{rmk}

\bibliographystyle{alpha}
\bibliography{canon}

\end{document}